\let\oldtocsection=\tocsection
\let\oldtocsubsection=\tocsubsection
\let\oldtocsubsubsection=\tocsubsubsection
\renewcommand{\tocsection}[2]{\hspace{0em}\oldtocsection{#1}{#2}}
\renewcommand{\tocsubsection}[2]{\hspace{5em}\oldtocsubsection{#1}{#2}}
\renewcommand{\tocsubsubsection}[2]{\hspace{2em}\oldtocsubsubsection{#1}{#2}}
\def\Xint#1{\mathchoice
{\XXint\displaystyle\textstyle{#1}}%
{\XXint\textstyle\scriptstyle{#1}}%
{\XXint\scriptstyle\scriptscriptstyle{#1}}%
{\XXint\scriptscriptstyle\scriptscriptstyle{#1}}%
\!\int}
\def\XXint#1#2#3{{\setbox0=\hbox{$#1{#2#3}{\int}$}
\vcenter{\hbox{$#2#3$}}\kern-.5\wd0}}
\def\avgint{\Xint-}
\newtheorem{theorem}{Theorem}[section]
\newtheorem{lemma}[theorem]{Lemma}
\newtheorem{prop}[theorem]{Proposition}
\newtheorem{corollary}[theorem]{Corollary}
\newtheorem{definition}[theorem]{Definition}
\newtheorem{example}[theorem]{Example}
\theoremstyle{definition}
\theoremstyle{remark}
\numberwithin{equation}{section}
 \def\cprime{$'$}
\newcommand{\R}{\mathbb R}
\newcommand{\Z}{\mathbb Z}
\newcommand{\subRn}{{{\mathbb R}^n}}
\newcommand{\Q}{\mathcal Q}
\newcommand{\D}{\mathcal D}
\newcommand{\Ss}{\mathcal S}
\newcommand{\T}{\mathcal T}
\newcommand{\F}{\mathcal F}
\newcommand{\G}{\mathcal G}
\newcommand{\M}{\mathcal M}
\newcommand{\I}{\mathcal I}
\newcommand{\C}{\mathcal{C}}
\DeclareMathOperator{\supp}{supp}
\DeclareMathOperator{\grad}{\nabla}
\newcommand{\avg}[1]{\langle #1 \rangle}
\title[Norm inequalities for fractional integrals]{Two weight norm
  inequalities for fractional integral operators and commutators}
\author{David Cruz-Uribe, SFO}
\address{Department of Mathematics, Trinity
  College, Hartford, CT 06106, USA}
\email{david.cruzuribe@trincoll.edu}
\thanks{The author is supported by the Stewart-Dorwart faculty
  development fund at Trinity College and by NSF grant 1362425.}
\subjclass[2010]{42B25, 42B30, 42B35}
\keywords{fractional integral operators, commutators, dyadic
  operators, weights}
\date{December 12, 2014}
\begin{document}

\maketitle

\vspace*{-0.25in}

{\small \tableofcontents}

\section{Introduction}
\label{section:introduction}

In these lecture notes we describe some
recent work on two weight norm inequalities for fractional
integral operators, also known as Riesz potentials, and for
commutators of fractional integrals.     Our point of view is strongly
influenced by the groundbreaking work on dyadic operators that led to
the proof of the $A_2$ conjecture by Hyt\"onen~\cite{hytonenP2010} and
the simplification of that proof by Lerner~\cite{Lern2012,MR3085756}.
(See also~\cite{hytonenP} for a more detailed history and bibliography
of this problem.)     Fractional integrals are of interest in
their own right and have important applications in the study of
Sobolev spaces and PDEs.  They are positive operators
and in many instances proofs are much easier for fractional integrals
than they are for Calder\'on-Zygmund singular integrals.  But as we
will see, in many
cases they are more difficult to work with, and we will give several
examples of results which are known to hold for singular integrals but
remain conjectures for fractional integrals. 

After giving some preliminary results in Section~\ref{section:prelim},
in Section~\ref{section:dyadic} we lay out the abstract theory of
dyadic grids and show how inequalities for fractional integrals and
commutators can be reduced to the study of dyadic operators.   All of
these ideas were implicit in the classical Calder\'on-Zygmund
decomposition but in recent years the essentials have been
extracted, yielding a substantially new perspective.  

In
Section~\ref{section:one-weight} we show how the dyadic
approach can be used to simplify the proof of one weight norm
inequalities for fractional integrals and commutators.   The purpose
of this digression is two-fold.  First, it provides a nice
illustration of the power of these dyadic methods, as the proofs are
markedly simpler than the classical proofs.  Second, we will use
these proofs to illustrate the technical obstacles we
will encounter in trying to prove two weight inequalities.

There are two approaches to two weight inequalities for fractional integrals:  the testing
conditions, first introduced by Sawyer~\cite{MR719674,MR930072}, and
the ``$A_p$ bump'' conditions introduced by Neugebauer~\cite{MR687633}
and P\'erez~\cite{MR1291534}.  Both approaches have their advantages.
In Section~\ref{section:testing} we consider testing conditions.  The
fundamental result we discuss is due to Lacey, Sawyer and
Uriarte-Tuero~\cite{LacSawUT2010}, but we will present a beautiful
simplification of their proof due to Hyt\"onen~\cite{hytonenP}.  We
conclude this section with a conjecture concerning testing conditions
for commutators of fractional integrals.

In Sections~\ref{section:bump} and~\ref{section:separated} we will discuss bump conditions.
Besides the work of P\'erez cited above, the contents of these
sections are based
on recent work by the author and
Moen~\cite{cruz-moen2012,MR3065302,MR3224572}.  We conclude the
last section with several open problems.  

Throughout these lecture notes we assume that the reader is familiar
with real analysis (e.g., as presented by Royden~\cite{MR1013117}) and
with classical harmonic analysis including the basics of the theory of
Muckenhoupt $A_p$ weights and one weight norm inequalities (e.g., the
first seven chapters of Duoandikoetxea~\cite{duoandikoetxea01}).
Additional references include the classic books by
Stein~\cite{MR0290095} and Garc\'\i a-Cuerva and Rubio de
Francia~\cite{garcia-cuerva-rubiodefrancia85} and the more recent
books by Grafakos~\cite{grafakos08a,grafakos08b}.  Many of the results
we give for weighted norm inequalities for fractional integrals are
scattered through the literature---there is unfortunately no single
reference for this material.  We will provide copious references
throughout, including historical ones.  Some of the material in these
notes is new and has not appeared in the literature before.

These notes are based on three lectures delivered at the 6th  International Course of
Mathematical Analysis in Andaluc\'\i a, held in Antequera, Spain,
September 8--12, 2014.  They are, however, greatly expanded to include
both new 
results and many 
details that I did not present in my lectures due to time constraints.  In addition,
I have taken this opportunity to correct some (relatively minor)
mistakes in the proofs I sketched in the lectures.  I am grateful to the
organizers for the invitation to present this work.  I would also
like to thank Kabe Moen, my
principal collaborator on fractional integrals (or Riesz potentials,
as he prefers), and Carlos P\'erez, who introduced me to bump
conditions and has shared his insights with me for many years.    It has
been a privilege to work with both of them.

\section{Preliminaries}
\label{section:prelim}

In this section we gather some essential definitions and a few
background results.  Hereafter,
we will be working in $\R^n$, and $n$ will always denote the
dimension.  We will denote constants by $C$, $c$, etc. and the value
may change at each appearance.  If necessary, we will denote the
dependence of the constants parenthetically: e.g., $C=C(n,p)$.   The
letters $P$ and $Q$ will be used to denote cubes in $\R^n$.  By a
weight we will always mean a non-negative, measurable function that is
positive on a set of positive measure.

Averages of functions will play a very important role in these notes,
so we introduce some useful notation.  Given any set $E$,
$0<|E|<\infty$, we define
\[ \avgint_E f(x)\,dx = \frac{1}{|E|}\int_E f(x)\,dx.  \]
More generally given a non-negative measure $\mu$, we define
\[ \avgint_E f(x)\,d\mu = \frac{1}{\mu(E)}\int_E f(x)\,d\mu.  \]
In other words, an average is always with respect to the measure.  If
we have a measure of the form $\sigma\,dx$, where $\sigma$ is a weight, we will write $d\sigma$, as in
$\avgint_E f\,d\sigma$, to emphasize this fact.   We will also use the
following more compact notation, particularly when the set is a cube~$Q$:
\[ \avgint_Q f(x)\,dx = \avg{f}_Q, \qquad \avgint_Q f(x)\,d\sigma =
\avg{f}_{Q,\sigma}. \]

\medskip

We now define the two operators we will be focusing on.  Given
$0<\alpha<n$ and a measurable function $f$, we define the fractional
integral operator $I_\alpha$
by
\[ I_\alpha f(x) = \int_\subRn \frac{f(y)}{|x-y|^{n-\alpha}}\,dy. \]
Given a function $b\in BMO$, the space of functions of bounded mean
oscillation, we define the commutator
\[ [b,I_\alpha]f(x) = b(x)I_\alpha f(x) - I_\alpha(bf)(x) =
\int_\subRn \big(b(x)-b(y)\big) \frac{f(y)}{|x-y|^{n-\alpha}}\,dy. \]
The fractional integral operator is classical:  it was introduced by
M.~Riesz~\cite{MR0030102}.  Commutators are more recent and were
first considered by~Chanillo~\cite{MR642611}.    The following are
some of the basic properties of these operators; unless otherwise
noted, see Stein~\cite[Chapter V]{MR0290095} for details.  

\begin{enumerate}
\setlength{\itemsep}{6pt}

\item $I_\alpha$ is a positive operator:  if $f(x)\geq 0$ a.e., then
  $I_\alpha f(x) \geq 0$.   Note, however, that $[b,I_\alpha]$ is not positive.

\item For $1<p<\frac{n}{\alpha}$, if we define $q$ by
  $\frac{1}{p}-\frac{1}{q}=\frac{\alpha}{n}$, then 
\[ I_\alpha : L^p \rightarrow L^q, \]
and for all $b\in BMO$,
\[ [b, I_\alpha] : L^p \rightarrow L^q. \]
See Chanillo~\cite{MR642611}. 

\item When $p=1$, $q=\frac{n}{n-\alpha}$, then $I_\alpha$ satisfies
  the weak type inequality 
\[ I_\alpha : L^p \rightarrow L^{q,\infty}, \]
but commutators are more singular and do not
satisfy a weak $(1,\frac{n}{n-\alpha})$ inequality.  For a
counter-example and a substitute inequality, see~\cite{cruz-uribe-fiorenza03}. 

\item We can define fractional powers of the Laplacian via the Fourier
  transform using the fractional integral operator: for all Schwartz
  functions $f$ and $0<\alpha<n$,
\[ (-\Delta)^{\frac{\alpha}{2}}f(x) = c I_\alpha f(x). \]
We also have that for all $f\in C_c^\infty$,
\[ |f(x)| \leq I_1(|\grad f|)(x). \]
\end{enumerate}

\medskip

Fractional integrals have found wide application in the study of
PDEs.  Here we mention a few results.   Recall the Sobolev embedding
theorem (see~\cite[Chapter V]{MR2424078}):  
if $f$ is contained in the Sobolev space  $W^{1,p}$, then for $1\leq
p<n$ and $p^*=\frac{np}{n-p}$,
\[  \|f\|_{L^{p^*}} \leq C\|\grad f\|_{L^p}. \]
When $p>1$ this is an immediate consequence of
the inequality relating $I_1$ and the gradient, and the strong type norm inequality for
$I_1$.  When $p=1$ it can be proved using the weak type
inequality for $I_1$ and a decomposition argument due to
Maz{\cprime}ya~\cite [p.~110]{mazya85} (see also Long and Nie~\cite{MR1187073}
and~\cite[Lemma~4.31]{MR2797562}). 

Two weight norm inequalities for $I_\alpha$ also yield
weighted Sobolev embeddings.  In particular, they can be used to prove
inequalities of the form
\[ \|f \|_{L^p(u)} \leq C\|\grad f\|_{L^p}. \]
These were introduced by Fefferman and Phong~\cite{fefferman83} in the study of the
Schr\"odinger operator.  Such inequalities can also be used to prove
that weak solutions of the elliptic equations with non-smooth
coefficients are strong solutions:  see, for example, Chiarenza and
Franciosi~\cite{MR1174821} and~\cite{dcu-km-sr14}.  For additional applications we refer to
the paper by Sawyer and Wheeden~\cite{MR1175693} and the many
references it contains.   (We remark in passing that this paper has been extremely
influential in the study of two weight norm inequalities for
fractional integrals.)

Closely related to the fractional integral operator is the fractional maximal
operator:  given $0<\alpha<n$ and
$f\in L^1_{loc}$, define
\[ M_\alpha f(x) = \sup_Q |Q|^{\frac{\alpha}{n}}\avgint_Q |f(y)|\,dy \cdot \chi_Q(x), \]
where the supremum is taken over all cubes with sides parallel to the
coordinate axes.  The fractional maximal operator was introduced by Muckenhoupt and
Wheeden~\cite{muckenhoupt-wheeden74} in order to proved one weight
norm inequalities for $I_\alpha$ via a good-$\lambda$ inequality.
This result is the analog of the one linking the Hardy-Littlewood maximal operator and
Calder\'on-Zygmund singular integrals proved by Coifman and
Fefferman~\cite{coifman-fefferman74}. 

For $1<p<\frac{n}{\alpha}$, $M_\alpha$ satisfies the same strong
$(p,q)$ inequality as $I_\alpha$.  In addition, it satisfies the upper
endpoint estimate $M_\alpha : L^\infty \rightarrow
L^{\frac{n}{\alpha}}$.   In contrast, if $f\in L^\infty$, then
$I_\alpha f$ need not be bounded, but does satisfy an exponential
integrability condition.   See, for instance,
Ziemer~\cite[Theorem~2.9.1]{MR1014685}.

Our approach to norm inequalities for the fractional integral operator
will avoid $M_\alpha$;  however, we will use it as a model operator
since it has many features in common with $I_\alpha$ but is usually easier to
work with.   We note in passing that there is an Orlicz fractional
maximal operator that plays a similar role for commutators of
fractional integrals:  see~\cite{cruz-uribe-fiorenza03}.  (This
operator also plays
a role in  the study of two weight, weak $(1,1)$ inequalities for $I_\alpha$: see Section~\ref{section:separated}.)

\section{Dyadic operators}
\label{section:dyadic}

In this section we explain the machinery of dyadic grids and dyadic
operators.  These ideas date back to the 1950's and the seminal work
of Calder\'on and Zygmund~\cite{MR0052553}, and have played a prominent
role in harmonic analysis since then.  In the past fifteen years  they have been
reformulated and
taken on a new
prominence because of their connection with the $A_2$
conjecture.  A important early presentation of this
new point of view was the lecture notes on dyadic
harmonic analysis by C.~Pereyra~\cite{pereyra01}.  As she described
them:

\begin{quote}
These notes contain what I consider are the main actors and universal
tools used in this area of mathematics.  They also contain an overview
of the classical problems that lead mathematicians to study these
objects and to develop the tools that are now considered the {\em abc}
of harmonic analysis.  The modern twist is the connection to a
parallel dyadic world where objects, statements and sometimes proofs
are simpler, but yet illuminated enough to guarantee that one can
translate them into the non-dyadic world.  
\end{quote}

The major advance since this was written was the realization that not
only could dyadic operators illuminate what was going on with their
non-dyadic counterparts, but in fact the solution of  non-dyadic problems could be
reduced to proving the corresponding results for dyadic operators.
Our understanding of this approach continues to evolve:  see for
instance, the very recent lecture notes on dyadic approximation by
Lerner and Nazarov~\cite{lerner-nazarov14}.

This philosophy of dyadic operators can be summarized by paraphrasing the title of the hit
song from Irving Berlin's 1946 musical, {\em Annie Get Your Gun}:

\smallskip

\begin{center}
\textit{\textbf{Anything you can do, I can do better (dyadicaly)!}}
\end{center}

\begin{figure}[h]
\includegraphics[width=2in]{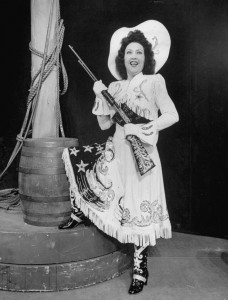}
\caption{Ethel Merman as Annie Oakley, 1946}
\end{figure}

\subsection*{Dyadic grids}
We begin by recalling the classical dyadic grid.  This is the
countable collection of cubes that are dyadic translates and dilations
of the unit cube, $[0,1)^n$:
\[ \Delta  = \{  Q = 2^k( [0,1)^n + m) :  k\in \Z, m \in \Z^n\}. \]
These cubes have a number of important properties:  any cube in
$\Delta$ has side-length a power of two; any two cubes in $\Delta$ are
disjoint or one is contained in the other; given any $k\in \Z$, the
subcollection $\Delta_k$ of cubes with side-length $2^k$ forms a
partition of $\R^n$.  

The importance of dyadic cubes lies in the Calder\'on-Zygmund cubes,
which give a very powerful decomposition of a function.    For proof
of this result, see Garc\'\i a-Cuerva and Rubio de
Francia~\cite[Chapter II]{garcia-cuerva-rubiodefrancia85}
and~\cite[Appendix~A]{MR2797562}. 

\begin{prop} \label{prop:CZcubes}
Let $f\in L^1_{loc}$ be such that $\avg{f}_Q\rightarrow 0$ as
$|Q|\rightarrow \infty$ (e.g., $f\in L^p$, $1\leq p < \infty$.)  Then
for each $\lambda>0$ there exists a collection of disjoint cubes
$\{Q_j\}\subset \Delta$ such that 
\[ \lambda < \avgint_{Q_j} |f(x)|\,dx \leq 2^n\lambda.  \]

Moreover, given $a\geq 2^{n+1}$, for each $k\in \Z$ let $\{Q_j^k\}$ be
the collection of cubes gotten by taking $\lambda = a^k$ above.  Define
\[ \Omega_k = \bigcup_j Q_j^k, \qquad E_j^k = Q_j^k \setminus
\Omega_{k+1}. \]
Then for all $j$ and $k$, the sets $E_j^k$ are pairwise disjoint and
$|E_j^k|\geq\frac{1}{2}|Q_j^k|$. 
\end{prop}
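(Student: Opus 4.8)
The plan is to run the classical Calder\'on--Zygmund stopping time. For the first assertion I would fix $\lambda>0$ and let $\{Q_j\}$ be the collection of cubes $Q\in\Delta$ that are maximal with respect to the property $\avgint_Q|f|\,dx>\lambda$. Here the point of the decay hypothesis (which, e.g. for $f\in L^p$, gives $\avgint_Q|f|\,dx\to 0$ as $|Q|\to\infty$) is that no dyadic cube with average above $\lambda$ can have arbitrarily large ancestors still satisfying this, so each such cube sits inside a maximal one; and since dyadic cubes are nested or disjoint while maximality forbids one $Q_j$ properly containing another, the family $\{Q_j\}$ is pairwise disjoint. The lower bound $\avgint_{Q_j}|f|\,dx>\lambda$ is built in, and the upper bound comes from passing to the dyadic parent $\widehat Q_j$: maximality gives $\avgint_{\widehat Q_j}|f|\,dx\le\lambda$, and $|\widehat Q_j|=2^n|Q_j|$ yields $\avgint_{Q_j}|f|\,dx\le 2^n\avgint_{\widehat Q_j}|f|\,dx\le 2^n\lambda$.

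For the second assertion I would apply the above with $\lambda=a^k$ for each $k$, writing $\{Q_j^k\}$, $\Omega_k$, $E_j^k$ as in the statement, and extract two structural consequences of maximality. Since $a\ge 2^{n+1}>1$, the threshold $a^{k+1}$ exceeds $a^k$, so every cube in $\{Q_i^{k+1}\}$ has average of $|f|$ exceeding $a^k$ and is therefore contained in some cube of $\{Q_j^k\}$ (maximality of the latter rules out the reverse inclusion). This gives at once the nesting $\Omega_{k+1}\subseteq\Omega_k$ and, for a fixed $Q_j^k$, that $Q_j^k\cap\Omega_{k+1}$ is the \emph{disjoint} union of precisely those cubes $Q_i^{k+1}$ that lie inside $Q_j^k$. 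Pairwise disjointness of the $E_j^k$ follows immediately: for fixed $k$ the sets $E_j^k\subseteq Q_j^k$ are disjoint because the $Q_j^k$ are, and for $k<k'$ any point of $E_i^{k'}\subseteq\Omega_{k'}\subseteq\Omega_{k+1}$ is excluded from $E_j^k=Q_j^k\setminus\Omega_{k+1}$.

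Finally, to bound $|E_j^k|$ below I would estimate $|Q_j^k\cap\Omega_{k+1}|$ using the disjoint decomposition above together with $a^{k+1}|Q_i^{k+1}|<\int_{Q_i^{k+1}}|f|\,dx$ and the upper bound $\avgint_{Q_j^k}|f|\,dx\le 2^na^k$ from the first part:
\[
|Q_j^k\cap\Omega_{k+1}|=\!\!\sum_{Q_i^{k+1}\subseteq Q_j^k}\!\!|Q_i^{k+1}|<\frac{1}{a^{k+1}}\int_{Q_j^k}|f|\,dx\le\frac{2^n}{a}\,|Q_j^k|\le\frac12|Q_j^k|,
\]
so $|E_j^k|=|Q_j^k|-|Q_j^k\cap\Omega_{k+1}|\ge\frac12|Q_j^k|$. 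I expect the only step requiring genuine care to be this maximality bookkeeping---verifying that cubes selected at the finer threshold $a^{k+1}$ nest inside those selected at the coarser threshold $a^k$, on which both the nesting of the $\Omega_k$ and the disjoint decomposition of $Q_j^k\cap\Omega_{k+1}$ rest. Everything else is routine, with the hypothesis $a\ge 2^{n+1}$ entering exactly to force $2^n/a\le\frac12$.
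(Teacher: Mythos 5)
Your argument is correct and is exactly the classical Calder\'on--Zygmund stopping-time proof that the paper itself does not reproduce but delegates to the references (Garc\'\i a-Cuerva and Rubio de Francia, Chapter II): maximal dyadic cubes above the threshold, the parent-cube bound for the upper estimate, nesting of the generations $\{Q_i^{k+1}\}$ inside the $\{Q_j^k\}$, and the measure estimate $|Q_j^k\cap\Omega_{k+1}|\le (2^n/a)|Q_j^k|\le \tfrac12|Q_j^k|$. All the maximality bookkeeping you flag (existence of maximal cubes via the decay hypothesis, disjointness, and why a cube of the finer generation cannot strictly contain one of the coarser generation) is handled correctly, so there is nothing to add.
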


These cubes are closely related to the dyadic maximal operator:  given
$f\in L^1_{loc}$, define the operator $M^d$ (\footnote{In the notation
  we will introduce below, we would call this operator $M^\Delta$.
  Here we prefer to use the classical notation.  As Emerson said,
  ``{\em Foolish consistency is the hobgoblin of little minds.}''})
 by 
\[ M^df(x) = \sup_{Q\in \Delta} \avgint_Q |f(y)|\,dy \cdot
\chi_Q(x). \]
Then for each $\lambda>0$, if we form the cubes $Q_j$ from the first
part of Proposition~\ref{prop:CZcubes}, 
\[ \{x\in \R^n : M^df(x) > \lambda \} = \bigcup_j Q_j. \]

The Calder\'on-Zygmund cubes were introduced by Calder\'on and Zygmund
in~\cite{MR0052553}.  The essential idea underlying the sets $E_j^k$ from the second half of
Proposition~\ref{prop:CZcubes} is due to
Calder\'on~\cite{MR0442579} (working with balls in a space of
homogeneous type).  This idea was applied to Calder\'on-Zygmund cubes by
Garc\'\i a-Cuerva and Rubio de
Francia~\cite[Chapter IV]{garcia-cuerva-rubiodefrancia85} in their
proof of the reverse H\"older inequality.  
It appears to have first been explicitly
stated and proved as a property of Calder\'on-Zygmund cubes by P\'erez~\cite{MR1327936}.  

\medskip

Given the specific example of the Calder\'on-Zygmund cubes, we make the
following two definitions that extract their fundamental properties.

\begin{definition} \label{defn:dyadic-grid}
A collection of cubes $\D$ in $\R^n$ is a dyadic grid if:\
\begin{enumerate}
\setlength{\itemsep}{6pt}

\item If $Q\in \D$, then $\ell(Q)=2^k$ for some $k\in \Z$.

\item If $P,\,Q\in \D$, then $P\cap Q \in \{ P, Q, \emptyset \}$.

\item For every $k\in \Z$, the cubes $\D_k = \{ Q\in \D : \ell(Q)= 2^k
  \}$ form a partition of $\R^n$. 

\end{enumerate}
\end{definition}

\begin{definition} \label{defn:sparse}
Given a dyadic grid $\D$, a set $\Ss\subset \D$ is sparse if for every
$Q\in S$,
\[  \bigg|\bigcup_{\substack{P\in S\\P\subsetneq Q}} P\bigg|
\leq \frac{1}{2}|Q|. \]
Equivalently, if we define 
\[ E(Q) = Q \setminus \bigcup_{\substack{P\in S\\P\subsetneq Q}}
P, \]
then the sets $E(Q)$ are pairwise disjoint and $|E(Q)|\geq \frac{1}{2}|Q|$. 
\end{definition}

It is immediate that the classical dyadic cubes $\Delta$ are a dyadic grid.  By
Proposition~\ref{prop:CZcubes}, given a function $f\in L^1_{loc}$, if
we form the cubes $\{Q_j^k\}$, then they are a sparse subset of
$\Delta$ with $E(Q_j^k)=E_j^k$.   Because of this fact, given a fixed dyadic grid $\D$, we will
often refer to cubes in it as dyadic cubes.  

Clearly, we can get dyadic grids by taking translations of the
cubes in $\Delta$.  The importance of this is that every cube in
$\R^n$ is contained in a cube from a fixed, finite collection of  such dyadic grids.

\begin{theorem} \label{thm:allcubes}
There exist dyadic grids $\D^k$, $1\leq k \leq 3^n$, such that given
any cube $Q$, there exists $k$ and $P\in \D^k$ such that $Q\subset P$
and $\ell(P)\leq 3\ell(Q)$.  
\end{theorem}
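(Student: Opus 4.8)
The plan is to construct the $3^n$ grids explicitly by shifting the standard dyadic grid $\Delta$ by carefully chosen translation vectors, and then to show that any given cube $Q$ is captured (up to a factor of $3$ in side-length) by some cube in one of the shifted grids. First I would reduce to the one-dimensional picture: for a parameter $t$ in a suitable finite set, define the shifted dyadic grid on $\R$ by $\Delta^t = \{\, 2^k([0,1)+m) + t\cdot(\tfrac{1}{3}(2^k-1)) : k\in\Z,\ m\in\Z \,\}$, or more simply $\Delta^t = \{\,2^k[0,1)+2^k m + (-1)^k \tfrac{t}{3} : \dots\}$; the precise shift that works is $\alpha_k = \sum_{j<k} \tfrac{1}{3}\cdot(\text{sign pattern})$, but the clean choice is to translate the grid of scale $2^k$ cubes by $(-1)^k\tfrac{t}{3}$ for $t\in\{0,1,2\}$ (or equivalently $t\in\{0,\pm\tfrac13\}$). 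One checks that each $\Delta^t$ is again a dyadic grid in the sense of Definition~\ref{defn:dyadic-grid}: scales are powers of two, the nesting property survives translation, and each scale still partitions $\R$. Then set $\D^{\vec t} = \Delta^{t_1}\times\cdots\times\Delta^{t_n}$ over all $\vec t\in\{0,1,2\}^n$, giving $3^n$ grids in $\R^n$.

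Next I would verify the covering property in one dimension, which is the heart of the matter. Given an interval $I$ of length $\ell(I)$, pick $k$ with $2^{k-1}\le 3\ell(I) < 2^k$ — wait, rather pick $k$ minimal with $2^k \ge 3\ell(I)$, so $2^k < 6\ell(I)$; actually the sharp bookkeeping is $2^{k}\ge 3\ell(I)$ and $2^{k-1}<3\ell(I)$, hence $\ell(I)\le \tfrac13 2^k$ and $2^k \le 3\ell(I)\cdot 2 /2 $... the point is that at this scale $2^k$, the three grids $\Delta^0,\Delta^1,\Delta^2$ have their scale-$k$ partitions offset from each other by $\tfrac13 2^k$, so their "breakpoints" (cube endpoints) are spaced $\tfrac13 2^k$ apart when superimposed. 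Since $\ell(I)\le\tfrac13 2^k$, the interval $I$ cannot contain two consecutive such breakpoints, hence $I$ avoids at least one of the three families of endpoints at scale $k$; the grid whose endpoints $I$ misses has a single scale-$k$ cube containing $I$, and that cube has length $2^k\le 3\ell(I)$ by the choice of $k$. This is the step I expect to be the main obstacle: getting the offsets of the three grids to interlace correctly at \emph{every} scale simultaneously (not just one scale), which is exactly why the shift must be taken as $(-1)^k\tfrac{t}{3}$ or an analogous scale-alternating choice rather than a fixed translation — a fixed translation would fail the nesting property (2) of Definition~\ref{defn:dyadic-grid}. One must check both the nesting and the interlacing hold together.

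Finally I would pass from dimension one to dimension $n$ by a product argument. Given a cube $Q\subset\R^n$ with side-length $\ell(Q)$, apply the one-dimensional result in each coordinate with the common scale $2^k$ determined by $2^{k-1}<3\ell(Q)\le 2^k$: this selects $t_i\in\{0,1,2\}$ for each $i$ such that the projection of $Q$ onto the $i$-th axis lies in a single scale-$k$ interval of $\Delta^{t_i}$. The product of these $n$ intervals is a scale-$k$ cube $P\in\D^{\vec t}$ with $Q\subset P$ and $\ell(P)=2^k\le 3\ell(Q)$, which is exactly the assertion. (If one prefers the grids indexed $1\le k\le 3^n$ rather than by $\vec t\in\{0,1,2\}^n$, just relabel, since $|\{0,1,2\}^n| = 3^n$.) I would remark that this is the standard "three dyadic lattices" device, due in this sharp form to Hytönen and collaborators, and that the constant $3$ and the count $3^n$ are not essential — any fixed finite family with a uniform comparability constant suffices for the applications.
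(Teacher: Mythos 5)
Your route (one-dimensional shifted grids, an interlacing-of-breakpoints lemma, then a coordinatewise product) is genuinely different from the paper's proof, which instead takes the standard dyadic cube $P\in\Delta$ of side $2^j$, $\tfrac{2^j}{3}\le \ell(Q)<\tfrac{2^{j+1}}{3}$, having maximal overlap with $Q$, and translates it coordinate-by-coordinate by $\tfrac{2^j}{3}$ into one of the grids $\D^t$, $t\in\{0,\pm\tfrac13\}^n$. Your approach can be made to work, but as written it has two real gaps. The first is the quantitative bookkeeping: you choose $k$ minimal with $2^k\ge 3\ell(Q)$ (equivalently $2^{k-1}<3\ell(Q)\le 2^k$), and your avoidance step requires $\ell(Q)\le\tfrac13 2^k$; but then the containing cube has $\ell(P)=2^k\ge 3\ell(Q)$, and the most you get is $\ell(P)<6\ell(Q)$. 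The assertion that ``$2^k\le 3\ell(Q)$ by the choice of $k$'' is incompatible with $\ell(Q)\le\tfrac13 2^k$ unless $3\ell(Q)$ is exactly a power of two, so what you have proved is the weaker statement with constant $6$ (essentially the Hyt\"onen--P\'erez variant quoted right after the theorem), not constant $3$. To get $3$ you must work at the scale with $2^k\le 3\ell(Q)<2^{k+1}$, where $\ell(Q)$ may be as large as just under $\tfrac23 2^k$, and strengthen the interlacing lemma accordingly: an open interval of length $<\tfrac23 2^k$ contains at most two points of the superimposed breakpoint set $\tfrac{2^k}{3}\Z$, two consecutive such points belong to two distinct grids, and two breakpoints of a single grid are $2^k$ apart; hence at least one of the three grids has no breakpoint meeting the interval, and its scale-$2^k$ interval contains the projection of $Q$. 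Your ``$\ell(Q)\le\tfrac13 2^k$, so $Q$ misses two consecutive breakpoints'' argument does not cover the range of lengths actually needed.

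The second gap is the construction of the grids themselves. The shift you settle on, translating the scale-$2^k$ cubes by $(-1)^k\tfrac t3$, is an absolute, scale-independent displacement: at scale $2^k$ the breakpoints of your three grids then all lie within distance $\tfrac23$ of one another instead of being offset by $\tfrac13 2^k$ and $\tfrac23 2^k$, so the interlacing fails completely for large cubes; and it does not restore the nesting property either, since the coarse endpoints $2^{k+1}m'+(-1)^{k+1}\tfrac t3$ are not endpoints of the finer scale. The displacement must be proportional to the scale: for instance, translate the scale-$2^k$ cubes by $(-1)^k 2^k t$ with $t\in\{0,\pm\tfrac13\}$. Then the scale-$(k+1)$ endpoints are $2^{k+1}\big(m'+(-1)^{k+1}t\big)=2^k\big(2m'-2(-1)^kt\big)=2^k\big((2m'-3(-1)^kt\cdot\tfrac{t}{|t|^2})\big)$ --- more simply, for $t=\pm\tfrac13$ one has $2m'-2(-1)^kt=(2m'\mp(-1)^k)+(-1)^kt$ with $2m'\mp(-1)^k\in\Z$, so every coarse endpoint is a fine endpoint and nesting holds, while modulo $2^k$ the three families have offsets $\{0,\tfrac13 2^k,\tfrac23 2^k\}$ at every scale, which is exactly the interlacing your lemma needs. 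With these two repairs (correct scale selection plus the stronger two-breakpoint counting, and a scale-proportional alternating shift) your product argument does yield the theorem with constant $3$ and $3^n$ grids; the paper's maximal-overlap translation argument reaches the same conclusion without any interlacing lemma.
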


The origin of Theorem~\ref{thm:allcubes} is obscure but we believe
that credit should be given to Okikiolu~\cite{MR1182488} and, for a
somewhat weaker version, to Chang, Wilson and Wolff~\cite{chang-wilson-wolff85}.(%
\footnote{Theorem~\ref{thm:allcubes} and variations of it have
  recently been attributed to Christ
in~\cite{MR1887641} and also to Garnett and
Jones in~\cite[Section~2.2]{MR2957550}.   
In particular, some people suggested that it was in the paper by Garnett and Jones on dyadic
$BMO$~\cite{MR658065}.  It is not.  Moreover, these authors have told me
and others that this result did not originate with them, though they
knew and shared it.    The earliest appearance
of a version of Theorem~\ref{thm:allcubes} in print  seems to be in Okikiolu~\cite[Lemma~1b]{MR1182488}.
Earlier, Chang,
Wilson and Wolff~\cite[Lemma~3.2]{chang-wilson-wolff85} had a weaker
but substantially similar version.  They showed that given the set $\Delta_A =
\{ Q \in \Delta, \ell(Q) \leq 2^A\}$, then there exists a finite
collection of translates of  $\Delta$ such that given any $Q\in
\Delta_A$, $3Q$ is contained in a cube of comparable size from one of
these translated grids.   A refined version of this lemma later
appeared in Wilson~\cite[Lemma~2.1]{wilson89}

The basic idea underlying the proof of Theorem~\ref{thm:allcubes} is sometimes referred to as
the ``one-third trick'' (e.g. in~\cite{MR3187852,li-pipher-ward}).
This idea has been variously  attributed~\cite{MR3187852,MR1993970} to
Garnett or Garnett and Jones, Davis,
and Wolff.  The earliest unambiguous appearance appears to be in
Wolff~\cite[Lemma~1.4]{MR659943}; Wolff attributes
this lemma to S.~Janson.})
The total number of dyadic grids needed can be reduced, though at the price
of increasing the constant $C$ relating the size of the cubes.  Hyt\"onen and P\'erez~\cite[Theorem~1.10]{MR3092729} showed that $2^n$ dyadic
grids suffice, with $C=6$.  (For details of the proof, see~\cite[Proposition~2.1]{MR3085756}.) Conde~\cite{MR2979613}
proved that only $n+1$ grids are necessary, and this bound is sharp, but with a constant
$C\approx n$.

\begin{proof}
We will use the following $3^n$ translates of the standard dyadic grid
$\Delta$:
\begin{equation} \label{eqn:grids}
 \D^t = \{ 2^j( [0,1)^n + m + t) :  j\in \Z, m \in \Z^n\}, \quad  t\in
 \{ 0, \pm 1/3\}^n.
\end{equation}
Now fix a cube $Q$; then there exists a unique $j\in \Z$ such that 
\[ \frac{2^j}{3} \leq \ell(Q) < \frac{2^{j+1}}{3}. \]
At most $2^n$ cubes in $\Delta$ of sidelength $2^j$ intersect $Q$; let $P$ be one such
that $|P\cap Q|$ is maximal.   

\begin{figure}[h]
\includegraphics[width=2in]{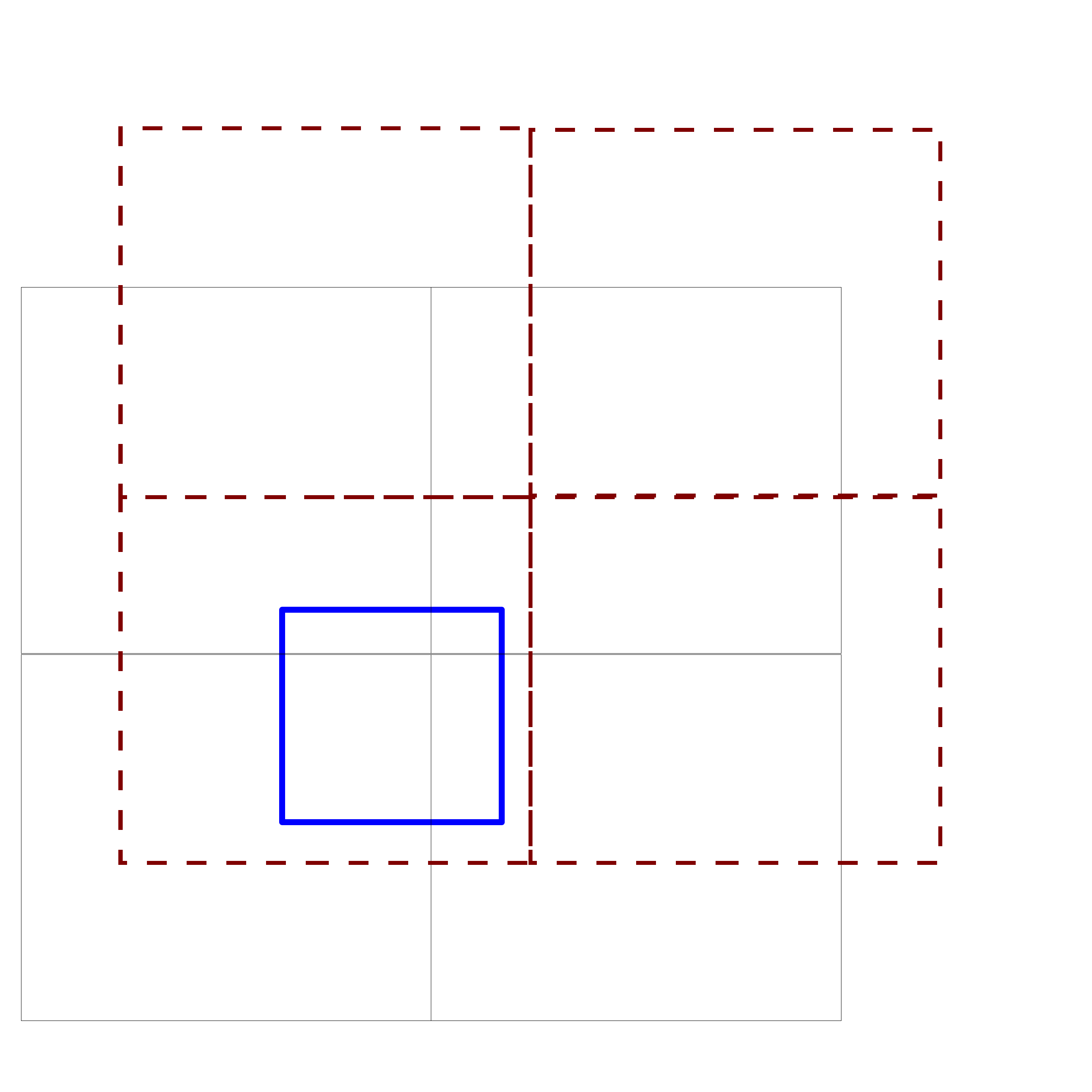}
\caption{The construction of $P'$ containing $Q$}
\end{figure}

To get the desired cube we translate $P$, acting on each coordinate in
succession.  If a face of $P$ (i.e. a $n-1$ dimensional hyper-plane on
the boundary) perpendicular to the $j$-th coordinate axis intersects
the interior of $Q$, translate $P$ parallel to the $j$-th coordinate
axis in the direction of the closest face of $Q$ a distance
$\frac{2^j}{3}$.  Because of the maximality of $P$, this direction is
away from the interior of $P$.  Hence, this moves the face out of $Q$,
and the opposite face remains outside as well, so more of $Q$ is
contained in the interior of $P$.  Thus, after at most
$n$ steps we will have a cube $P'$ that is contained in one of the
grids $\D^t$, $\ell(P')= \ell(P)\leq 3\ell(Q)$, and 
such that $Q\subset P'$.
\end{proof}  

\medskip

Though we do not consider it here, we want to note that there is
another important approach to dyadic grids.  Nazarov, Treil and
Volberg~\cite{MR1428988,MR1470373,nazarov-treil-volberg03} have shown
that random dyadic grids (i.e., translates of $\Delta$ where the
translation is taken according to some probability distribution) are
very well behaved ``on average.''  This approach was central to
Hyt\"onen's original proof of the $A_2$ conjecture~\cite{hytonenP2010}.

\subsection*{Dyadic operators}
We can now introduce the dyadic operators that we will use in place of
the fractional maximal and integral operators and commutators.  We
begin with the fractional maximal operator.   Given $0<\alpha<n$, a
dyadic grid $\D$ and $f\in L^1_{loc}$, 
define
\[ M^\D_\alpha f(x) = \sup_{Q\in \D} |Q|^{\frac{\alpha}{n}}\avgint_Q
|f(y)|\,dy \cdot \chi_Q(x). \]

\begin{prop} \label{prop:dyadic-max}
There exists a constant $C(n,\alpha)$ such that for every function
$f\in L^1_{loc}$ and $1\leq t \leq 3^n$,
\[ M_\alpha^{\D^t} f(x) \leq M_\alpha f(x) 
\leq C(n,\alpha) \sup_t  M_\alpha^{\D^t} f(x), \]
where the grids $\D^t$ are defined by~\eqref{eqn:grids}.
\end{prop}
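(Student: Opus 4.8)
The plan is to prove the two inequalities separately, and in each case to reduce to comparing averages over cubes from different grids. The left-hand inequality $M_\alpha^{\D^t} f(x) \leq M_\alpha f(x)$ is immediate: every cube $Q \in \D^t$ is in particular a cube with sides parallel to the axes, so the supremum defining $M_\alpha^{\D^t} f(x)$ is taken over a subcollection of the cubes appearing in the supremum defining $M_\alpha f(x)$, and each term $|Q|^{\alpha/n}\avgint_Q |f|\,dy \cdot \chi_Q(x)$ is one of the terms in the larger supremum. Hence the dyadic maximal function is pointwise dominated by the full one.

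For the right-hand inequality, fix $x$ and a cube $Q$ (with sides parallel to the axes) containing $x$ that nearly realizes the supremum defining $M_\alpha f(x)$; it suffices to bound the single term $|Q|^{\alpha/n}\avgint_Q |f|\,dy$ by $C(n,\alpha)\sup_t M_\alpha^{\D^t} f(x)$. By Theorem~\ref{thm:allcubes} — applied with the explicit $3^n$ grids $\D^t$ of~\eqref{eqn:grids} — there is a $t$ and a cube $P \in \D^t$ with $Q \subset P$ and $\ell(P) \leq 3\ell(Q)$. Then $x \in P$, so $P$ is admissible in the supremum defining $M_\alpha^{\D^t} f(x)$. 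Since $|f|\geq 0$ and $Q \subset P$ we have $\int_Q |f|\,dy \leq \int_P |f|\,dy$, while $|P| = \ell(P)^n \leq 3^n \ell(Q)^n = 3^n|Q|$, hence $|Q| \geq 3^{-n}|P|$. Combining these,
\[
 |Q|^{\frac{\alpha}{n}}\avgint_Q |f|\,dy
 = |Q|^{\frac{\alpha}{n}-1}\int_Q |f|\,dy
 \leq 3^{n(1-\frac{\alpha}{n})}|P|^{\frac{\alpha}{n}-1}\int_P |f|\,dy
 = 3^{n-\alpha}\,|P|^{\frac{\alpha}{n}}\avgint_P |f|\,dy,
\]
using $1-\frac{\alpha}{n}>0$. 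The right-hand side is at most $3^{n-\alpha} M_\alpha^{\D^t} f(x) \leq 3^{n-\alpha}\sup_t M_\alpha^{\D^t} f(x)$. Taking the supremum over all admissible $Q$ gives $M_\alpha f(x) \leq C(n,\alpha)\sup_t M_\alpha^{\D^t} f(x)$ with $C(n,\alpha) = 3^{n-\alpha}$.

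There is essentially no obstacle here: the only point requiring care is bookkeeping the exponent $\frac{\alpha}{n}-1$, which is negative, so that enlarging $Q$ to $P$ in the measure factor \emph{helps} (because $|P|\geq|Q|$ appears to a negative power, this would hurt, so one must instead use $|P|\leq 3^n|Q|$, i.e. $|P|^{\frac{\alpha}{n}-1}\geq (3^n|Q|)^{\frac{\alpha}{n}-1}$ — let me restate: since $\frac{\alpha}{n}-1<0$, from $|Q|\leq|P|$ we get $|Q|^{\frac{\alpha}{n}-1}\geq|P|^{\frac{\alpha}{n}-1}$, which goes the wrong way, so one genuinely needs the two-sided comparison $3^{-n}|P|\leq|Q|\leq|P|$ and applies $|Q|^{\frac{\alpha}{n}-1}\leq (3^{-n}|P|)^{\frac{\alpha}{n}-1}=3^{n-\alpha}|P|^{\frac{\alpha}{n}-1}$). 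Positivity of $f$ — equivalently, monotonicity of $E\mapsto\int_E|f|$ — is what lets us pass from $Q$ to the larger cube $P$ in the integral, and this is the feature of the maximal operator that makes the comparison work; the same idea recurs throughout the notes for $I_\alpha$ itself.
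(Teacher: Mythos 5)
Your proof is correct and follows essentially the same route as the paper: the first inequality holds because the dyadic cubes form a subfamily of all cubes, and the second follows from Theorem~\ref{thm:allcubes} together with the comparison $3^{-n}|P|\leq |Q|\leq |P|$, yielding the same constant $3^{n-\alpha}$.
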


Proposition~\ref{prop:dyadic-max} is stated in~\cite{MR3065302}
without proof; when $\alpha=0$ this was proved in~\cite[Proof of
Theorem~1.10]{MR3092729} and the proof we give for $\alpha>0$ is
essentially the same.  

\begin{proof}
The first inequality is immediate.  To prove the second, fix $x$ and a cube
$Q$ containing $x$.  Then by Theorem~\ref{thm:allcubes} there exists $t$ and $P\in
\D^t$ such that $Q\subset P$ and $|P|\leq 3^n|Q|$. Therefore,
\[ |Q|^{\frac{\alpha}{n}}\avgint_Q |f(y)|\,dy \leq 3^{n-\alpha} |P|^{\frac{\alpha}{n}}\avgint_P  |f(y)|\,dy 
\leq  C(n,\alpha) M_\alpha^{\D^t} f(x) \leq C(n,\alpha) \sup_t  M_\alpha^{\D^t} f(x). \]
If we take the supremum over all
cubes $Q$ containing $x$, we get the desired inequality.
\end{proof}

Because we are working with a finite number of dyadic grids, we have
that
\[ \sup_t  M_\alpha^{\D^t} f(x) \approx \sum_{t=1}^{3^n}
M_\alpha^{\D^t} f(x), \]
and the constants depend only on $n$.   In other words, we can
dominate any sub-linear expression for $M_\alpha$ by a sum of expressions
involving $M_\alpha^{\D^t}$.   The same will be true for~$I_\alpha$.
Hereafter, we will use this equivalence without comment.

\medskip

The dyadic analog of the fractional integral operator is defined as an
infinite sum:   given $0<\alpha<n$ and a dyadic grid $\D$, for all 
$f\in L^1_{loc}$ let
\[ I_\alpha^\D f(x) = \sum_{Q\in \D} 
|Q|^{\frac{\alpha}{n}}\avg{f}_Q \cdot \chi_Q(x).  \]
The dyadic fractional integral operator (with $\D=\Delta$) was
introduced by Sawyer and Wheeden~\cite{MR1175693} who showed that
averages over an infinite family of dyadic grids dominated
$I_\alpha$.  Here we show that only a finite number of grids is
necessary; this was proved in~\cite[Proposition~2.2]{MR3224572}.

\begin{prop} \label{prop:dyadic-frac}
There exist constants $c(n,\alpha)$, $C(n,\alpha)$ such that for every non-negative
function $f\in L^1_{loc}$ and $1\leq t \leq 3^n$,
\[ c(n,\alpha) I_\alpha^{\D^t} f(x) \leq I_\alpha f(x) 
\leq C(n,\alpha) \sup_t  I_\alpha^{\D^t} f(x), \]
where the grids $\D^t$ are defined by~\eqref{eqn:grids}.
\end{prop}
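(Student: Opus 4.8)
The plan is to mimic the proof of Proposition~\ref{prop:dyadic-max} but account for the fact that $I_\alpha^\D$ is an infinite sum rather than a supremum. The lower bound $c(n,\alpha) I_\alpha^{\D^t} f(x) \le I_\alpha f(x)$ should come from a pointwise estimate: for a non-negative $f$ and a fixed grid $\D^t$, one writes
\[ I_\alpha^{\D^t} f(x) = \sum_{Q\in \D^t} |Q|^{\alpha/n}\avg{f}_Q\,\chi_Q(x)
= \sum_{Q\ni x,\,Q\in\D^t} |Q|^{\alpha/n-1}\int_Q f(y)\,dy, \]
and because the cubes in $\D^t$ containing a fixed $x$ are nested with geometrically increasing side-lengths, for $y\in Q$ one has $|x-y|\lesssim \ell(Q) = |Q|^{1/n}$, so $|Q|^{\alpha/n-1} = \ell(Q)^{\alpha-n} \lesssim |x-y|^{\alpha-n}$. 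Summing over the nested cubes and using the geometric growth of $\ell(Q)$ (so the sum is controlled by its largest term at each scale, hence by an integral) yields $I_\alpha^{\D^t}f(x) \lesssim \int f(y)|x-y|^{\alpha-n}\,dy = I_\alpha f(x)$, with a constant depending only on $n$ and $\alpha$. (This is essentially the dyadic-to-continuous comparison in Sawyer–Wheeden~\cite{MR1175693}.)

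For the upper bound I would argue in the reverse direction, decomposing the integral defining $I_\alpha f(x)$ into dyadic annuli around $x$: writing $A_k = \{y : 2^k \le |x-y| < 2^{k+1}\}$,
\[ I_\alpha f(x) = \sum_{k\in\Z} \int_{A_k} \frac{f(y)}{|x-y|^{n-\alpha}}\,dy
\le \sum_{k\in\Z} 2^{k(\alpha-n)} \int_{B(x,2^{k+1})} f(y)\,dy. \]
Each ball $B(x,2^{k+1})$ sits inside a cube $Q_k$ with $x\in Q_k$ and $\ell(Q_k)\approx 2^{k}$; by Theorem~\ref{thm:allcubes}, $Q_k$ is contained in some $P_k \in \D^{t(k)}$ with $\ell(P_k)\le 3\ell(Q_k)\approx 2^k$ and $x\in P_k$. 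Then $2^{k(\alpha-n)}\int_{B(x,2^{k+1})} f \lesssim |P_k|^{\alpha/n-1}\int_{P_k} f = |P_k|^{\alpha/n}\avg{f}_{P_k}$. The subtlety is that the grid index $t(k)$ varies with $k$, so one does not immediately land inside a single $I_\alpha^{\D^t}f(x)$; but since there are only $3^n$ grids, one can group the scales $k$ by the value of $t(k)$, and within each group the cubes $P_k$ are nested (all containing $x$, all from one grid) with comparable-to-distinct side-lengths, so the corresponding partial sum is bounded by $\sum_{P\in\D^{t},\,P\ni x}|P|^{\alpha/n}\avg{f}_P = I_\alpha^{\D^t}f(x) \le \sup_t I_\alpha^{\D^t}f(x)$. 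Summing over the $3^n$ groups gives $I_\alpha f(x) \le C(n,\alpha)\sup_t I_\alpha^{\D^t}f(x)$.

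The main obstacle I anticipate is the bookkeeping in the upper bound: one must ensure that when several annuli $A_k$ get mapped into the \emph{same} grid $\D^t$, the resulting cubes $P_k$ are genuinely distinct (or at least that each cube of $\D^t$ is used boundedly often), so that the sum over $k$ is truly dominated by the full sum $I_\alpha^{\D^t}f(x)$ rather than over-counting. This is where one uses that within a fixed grid the cubes containing $x$ have side-lengths that are distinct powers of $2$, so at most boundedly many scales $2^k$ (depending only on the constant $3$ in Theorem~\ref{thm:allcubes}) can produce a cube of any given side-length; absorbing that bounded multiplicity into $C(n,\alpha)$ closes the argument. The lower bound, by contrast, is a routine geometric-series estimate.
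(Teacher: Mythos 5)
Your proposal is correct, and your upper bound is essentially the paper's own argument: an annular decomposition around $x$ (the paper uses the cubic annuli $Q(x,2^k)\setminus Q(x,2^{k-1})$ rather than spherical ones), Theorem~\ref{thm:allcubes} to place each scale-$2^k$ neighborhood of $x$ inside a cube of comparable side length from one of the $3^n$ grids, and then exactly the bounded-multiplicity bookkeeping you flag: since the chosen cube has side length a power of $2$ lying in a window of boundedly many dyadic scales around $2^k$, each cube of a fixed grid is used boundedly often, so the sum over $k$ is dominated by $C(n,\alpha)\sum_t I_\alpha^{\D^t}f(x)\leq C(n,\alpha)\,3^n\sup_t I_\alpha^{\D^t}f(x)$. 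For the lower bound you take a genuinely different route. The paper truncates the sum at scales $\leq 2^N$, splits each term over $Q_k\setminus Q_{k-1}$ and $Q_{k-1}$, bounds the (disjoint) annular pieces by $c(n,\alpha)\int_{Q_N}f(y)|x-y|^{\alpha-n}\,dy$, recognizes the inner pieces as $2^{\alpha-n}$ times the truncated sum itself, and absorbs; this is why it first reduces to bounded $f$ by monotone convergence, so the truncated sum is finite and the rearrangement is legitimate. Your route avoids that reduction, but as written it needs one step made explicit: you cannot simply insert the pointwise bound $\ell(Q)^{\alpha-n}\lesssim |x-y|^{\alpha-n}$ term by term, since that gives $\sum_{Q\ni x}\int_Q f(y)|x-y|^{\alpha-n}\,dy$, which over-counts each $y$ infinitely often ($y$ lies in every sufficiently large cube containing $x$) and is generally infinite. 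The correct implementation of your idea is to interchange sum and integral (Tonelli, $f\geq 0$), so that $I_\alpha^{\D^t}f(x)=\int f(y)\bigl(\sum_{Q\in\D^t,\,Q\ni x,\,Q\ni y}\ell(Q)^{\alpha-n}\bigr)dy$, and then observe that any cube containing both $x$ and $y$ has $\ell(Q)\geq |x-y|/\sqrt{n}$, so the inner geometric series is at most $c(n,\alpha)|x-y|^{\alpha-n}$ --- this is what your phrase ``controlled by its largest term'' should mean. With that spelled out, your argument (the Sawyer--Wheeden comparison you cite) is complete and is arguably cleaner than the paper's absorption trick, at the modest cost of an appeal to Tonelli; the paper's version instead keeps everything at the level of a single rearranged sum but needs the preliminary reduction to bounded $f$.
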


\begin{proof}
To prove the first inequality, fix a dyadic grid
  $\D=\D^t$, a non-negative function $f$, and $x\in \R^n$.    Without
  loss of generality we may assume that $f$ is bounded:  since
  $I_\alpha$ and $I_\alpha^\D$ are positive operators, the inequality
  for unbounded $f$ follows by the monotone convergence theorem.

Let
  $\{Q_k\}_{k\in \Z}\subset \D$ be the unique sequence of dyadic
  cubes such that $\ell(Q_k)=2^k$ and $x\in Q_k$.  Then for every
  integer $N>0$, 
\begin{align*}
& \sum_{\stackrel{Q\in \D}{\ell(Q)\leq 2^N}}
  |Q|^{\frac{\alpha}{n}}\avg{f}_Q\cdot \chi_Q(x) \\
& \qquad \qquad =\sum_{k=-\infty}^N|Q_k|^{\frac{\alpha}{n}-1}
\int_{Q_k\backslash Q_{k-1}} f(y)\,dy
+\sum_{k=-\infty}^N|Q_k|^{\frac{\alpha}{n}-1}\int_{Q_{k-1}} f(y)\,dy\\
& \qquad \qquad \leq c(n,\alpha) \sum_{k=-\infty}^N
\int_{Q_k\backslash Q_{k-1}}\frac{f(y)}{|x-y|^{n-\alpha}}\,dy
+2^{\alpha-n}\sum_{\stackrel{Q\in \D}{\ell(Q)\leq 2^N}} 
|Q|^{\frac{\alpha}{n}}\avg{f}_Q\cdot \chi_Q(x)\\
& \qquad \qquad =  c(n,\alpha)  \int_{Q_N} \frac{f(y)}{|x-y|^{n-\alpha}} \,dy
+ 2^{\alpha-n}\sum_{\stackrel{Q\in \D}{\ell(Q)\leq 2^N}} 
|Q|^{\frac{\alpha}{n}}\avg{f}_Q\cdot \chi_Q(x).
\end{align*}
Because $f$ is bounded, the last sum is finite.  Therefore, since $2^{\alpha-n}<1$, we can rearrange terms  and take the limit as $N\rightarrow
\infty$ to get
\[ c(n,\alpha) I^{\D}_\alpha f(x) \leq  I_\alpha f(x). \]

\medskip

To prove the second inequality, let $Q(x,r)$ be the cube of
side-length $2r$ centered at $x$.  Then 
\[ 
I_\alpha f(x)
 =\sum_{k\in \Z} \int_{Q(x,2^k)\setminus Q(x,2^{k-1})}
  \frac{f(y)}{|x-y|^{n-\alpha}}\,dy 
\leq 2^{n-\alpha}\sum_{k\in \Z} 2^{-k(n-\alpha)}\int_{Q(x,2^k)} f(y)\,dy.
\]

By Theorem~\ref{thm:allcubes}, for each $k\in \Z$ there exists
a grid $\D^t$, $1\leq t \leq 3^n$, and  $Q_t\in \D^t$ such that $Q(x,2^k)\subset Q_t$
and
\[  2^{k+1}=\ell(Q(x,2^k))\leq \ell(Q_t)\leq 6\ell(Q(x,2^k))=12\cdot 2^k.\]
Since $\ell(Q_t)=2^j$ for some $j$, we must have that  $2^{k+1}\leq
\ell(Q_t)\leq 2^{k+3}$.  Hence, 
\begin{align*}
&  2^{n-\alpha}\sum_{k\in \Z} (2^{-k})^{n-\alpha}\int_{Q(x,2^k)} f(y)\,dy \\
&\qquad \qquad \leq C(n,\alpha) \sum_{k\in \Z} \sum_{t=1}^{3^n} \sum_{\stackrel{Q\in
    \D^t}{2^{k+1}\leq \ell(Q)\leq 2^{k+3}}} 
|Q|^{\frac{\alpha}{n}}\avg{f}_Q\cdot \chi_Q(x)\\
& \qquad \qquad \leq C(n,\alpha) \sum_{t=1}^{3^n} \sum_{Q\in \D^t}  
|Q|^{\frac{\alpha}{n}}\avg{f}_Q\cdot \chi_Q(x) \\
& \qquad \qquad \leq C(n,\alpha) \sum_{t=1}^{3^n} I_\alpha^{\D^t}f(x)\\
& \qquad \qquad \leq C(n,\alpha) \sup_t I_\alpha^{\D^t}f(x). 
\end{align*}
If we combine these two estimates we get the second inequality.
\end{proof}

Intuitively, the dyadic version of the commutator $[b,I_\alpha]$ is
the operator $[b,I_\alpha^\D]$.   However, recall that this operator
is not positive:   we cannot
prove the pointwise bound 
\[  \big|[b,I_\alpha]f(x)\big| \leq C \sup_t \big|[b,I_\alpha^{\D^t}]f(x)\big|, \]
even for $f$ non-negative.   (We are not certain whether this
inequality is in fact true.)  But if we pull the absolute values
inside the integral we do get a useful dyadic
approximation of the commutator.  The following result was
implicit in~\cite{cruz-moen2012}; the proof is essentially the same as
the proof of the second inequality in Proposition~\ref{prop:dyadic-frac}.

\begin{prop} \label{prop:dyadic-commutator}
There exists a constant $C(n,\alpha)$ such that for every non-negative
function $f\in L^1_{loc}$ and $b\in BMO$,
\[  \big|[b,I_\alpha] f(x) \big|
\leq C(n,\alpha) \sup_t C_b^{\D^t}f(x), \]
where the grids $\D^t$ are defined by~\eqref{eqn:grids} and
\[ C_b^{\D^t}f(x) = \sum_{Q\in \D^t}
|Q|^{\frac{\alpha}{n}}\avgint_Q |b(x)-b(y)|f(y)\,dy \cdot \chi_Q(x). \]
\end{prop}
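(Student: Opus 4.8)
The plan is to mimic, nearly verbatim, the proof of the second inequality in Proposition~\ref{prop:dyadic-frac}, the only change being that we carry the absolute value $|b(x)-b(y)|$ through every estimate in place of the positive kernel. Since all quantities involved are non-negative, the manipulations used there remain valid.

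First I would decompose the commutator by annuli centered at $x$: writing $Q(x,r)$ for the cube of side-length $2r$ centered at $x$, we have
\[
\big|[b,I_\alpha]f(x)\big| \leq \int_\subRn |b(x)-b(y)|\frac{f(y)}{|x-y|^{n-\alpha}}\,dy
= \sum_{k\in\Z}\int_{Q(x,2^k)\setminus Q(x,2^{k-1})} |b(x)-b(y)|\frac{f(y)}{|x-y|^{n-\alpha}}\,dy.
\]
On the $k$-th annulus $|x-y|\geq 2^{k-1}$, so this is bounded by
\[
2^{n-\alpha}\sum_{k\in\Z} 2^{-k(n-\alpha)}\int_{Q(x,2^k)} |b(x)-b(y)|f(y)\,dy.
\]

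Next I would invoke Theorem~\ref{thm:allcubes} exactly as in Proposition~\ref{prop:dyadic-frac}: for each $k$ there is a grid $\D^t$ and a cube $Q\in\D^t$ with $Q(x,2^k)\subset Q$ and $2^{k+1}\leq\ell(Q)\leq 2^{k+3}$, so $|Q(x,2^k)|\leq |Q|$ and $|Q|^{\alpha/n-1}\leq C(n,\alpha) 2^{-k(n-\alpha)}$. Since $b(x)-b(y)$ has a fixed sign issue only through the absolute value — which we have already taken — we get
\[
2^{-k(n-\alpha)}\int_{Q(x,2^k)} |b(x)-b(y)|f(y)\,dy \leq C(n,\alpha)\, |Q|^{\frac{\alpha}{n}}\avgint_Q |b(x)-b(y)|f(y)\,dy,
\]
and each such cube appears for at most three consecutive values of $k$, so summing in $k$ and in $t$ gives
\[
\big|[b,I_\alpha]f(x)\big| \leq C(n,\alpha)\sum_{t=1}^{3^n}\sum_{Q\in\D^t} |Q|^{\frac{\alpha}{n}}\avgint_Q |b(x)-b(y)|f(y)\,dy\cdot\chi_Q(x) \leq C(n,\alpha)\sup_t C_b^{\D^t}f(x),
\]
as desired.

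I do not expect a serious obstacle here; the one point to be careful about is the reindexing that turns the sum over annuli into a sum over dyadic cubes. In Proposition~\ref{prop:dyadic-frac} this is handled by noting that the constraint $2^{k+1}\leq\ell(Q)\leq 2^{k+3}$ means each fixed $Q$ is counted boundedly often, which lets us drop the annular structure and replace the double sum by $\sum_{Q\in\D^t}$. The same counting works verbatim once the kernel $|x-y|^{-(n-\alpha)}$ has been replaced by the constant $|Q|^{\alpha/n-1}$ and the factor $|b(x)-b(y)|$ is simply carried along as a non-negative weight against $f$. The positivity of $f$ and of $|b(x)-b(y)|$ is exactly what makes the "pull the absolute values inside" step legitimate and the monotone rearrangements valid; no cancellation is used or needed.
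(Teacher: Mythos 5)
Your argument is correct and is exactly the route the paper intends: the text states that the proof of Proposition~\ref{prop:dyadic-commutator} is essentially the same as that of the second inequality in Proposition~\ref{prop:dyadic-frac}, and your annular decomposition, the appeal to Theorem~\ref{thm:allcubes} with $2^{k+1}\leq\ell(Q)\leq 2^{k+3}$, and the bounded-multiplicity reindexing carry the non-negative factor $|b(x)-b(y)|$ through without any change. The only essential point, which you correctly isolate, is that the absolute value is taken inside the integral first, so positivity of $f$ and of $|b(x)-b(y)|$ justifies every monotone step.
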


\medskip

\subsection*{Sparse operators}

We now come to another important reduction:  we can replace the dyadic operators
$M_\alpha^\D$ and $I_\alpha^\D$ with operators defined on sparse
families.    For the fractional maximal operator we replace it with a
linear operator that resembles the fractional integral operator.   Given a dyadic grid $\D$, a sparse
set $\Ss\subset \D$ and $f\in L_{loc}^1$,  define the operator
$L_\alpha^\Ss$ by
\[ L_\alpha^\Ss f(x) = \sum_{Q\in \Ss} |Q|^{\frac{\alpha}{n}}\avg{f}_Q
\cdot \chi_{E(Q)}(x). \]
The idea for this linearization was
implicit in Sawyer~\cite{sawyer82b}; for the maximal operator see also
de la Torre~\cite{MR1480938}.   The following result was given without
proof in ~\cite{MR3065302}.

\begin{prop} \label{prop:max-sparse}
Given a dyadic grid $\D$ and a non-negative function $f$ such that
$\avg{f}_Q \rightarrow 0$ as $|Q|\rightarrow \infty$, there
exists a sparse set $\Ss=\Ss(f) \subset \D$ and a constant $C(n,\alpha)$
independent of $f$ such that for every $x\in \R^n$,
\[ L_\alpha^\Ss f(x) \leq  M_\alpha^\D f(x) \leq C(n,\alpha)   L_\alpha^\Ss f(x). \]
\end{prop}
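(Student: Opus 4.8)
The plan is to construct the sparse family $\Ss$ by running the usual Calder\'on–Zygmund stopping-time argument adapted to the dyadic grid $\D$ and the exponent scaling of $M_\alpha^\D$. First I would observe that the lower bound $L_\alpha^\Ss f(x) \leq M_\alpha^\D f(x)$ is immediate for any sparse $\Ss \subset \D$: since the sets $E(Q)$ are pairwise disjoint, for each $x$ at most one term in the sum defining $L_\alpha^\Ss f(x)$ is nonzero, namely the term for the unique $Q\in\Ss$ with $x\in E(Q)\subset Q$, and that single term $|Q|^{\alpha/n}\avg{f}_Q$ is one of the quantities over which the supremum defining $M_\alpha^\D f(x)$ is taken. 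So the content is entirely in the reverse inequality, which is where I would spend the effort.

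For the upper bound, the idea is to build $\Ss$ as the collection of ``stopping cubes'' for the dyadic fractional averages. Fix $a = 2^{n+1}$ (or any constant $>2^n$) as in Proposition~\ref{prop:CZcubes}. For each $k\in\Z$, let $\{Q_j^k\}$ be the maximal dyadic cubes in $\D$ for which $|Q_j^k|^{\alpha/n}\avg{f}_{Q_j^k} > a^k$; maximal cubes exist because the hypothesis $\avg{f}_Q\to 0$ as $|Q|\to\infty$ (together with $|Q|^{\alpha/n}\to\infty$, which requires a small additional observation, but the product still tends to $0$ since $f\in L^1_{loc}$ forces $\avg{f}_Q = O(|Q|^{-1}) \cdot \|f\chi_Q\|_1$... more carefully, one uses that $|Q|^{\alpha/n}\avg{f}_Q = |Q|^{\alpha/n - 1}\int_Q f$, and for $f$ with the stated decay this is bounded on large cubes, so one truncates $f$ to be bounded and compactly supported first and passes to the limit by monotone convergence as in the proof of Proposition~\ref{prop:dyadic-frac}). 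Set $\Ss = \{Q_j^k : j,k\}$ and $\Omega_k = \bigcup_j Q_j^k$. The key geometric facts are: (i) the $\Omega_k$ are nested decreasing in $k$, since $a^{k+1}>a^k$ and the averages are dyadic; (ii) each $Q_j^{k+1}$ is contained in some $Q_i^k$, so the sets $E(Q_j^k) := Q_j^k\setminus\bigcup\{P\in\Ss : P\subsetneq Q_j^k\}$ coincide with $Q_j^k\setminus\Omega_{k+1}$ and hence are pairwise disjoint; (iii) sparseness, i.e. $|E(Q_j^k)|\geq\frac12|Q_j^k|$, which follows exactly as in Proposition~\ref{prop:CZcubes}: inside $Q_j^k$, the total measure of the maximal subcubes at level $k+1$ is controlled because on each such subcube the average $|Q|^{\alpha/n}\avg{f}_Q$ is at most $a^n$ times the threshold and one compares against the average over $Q_j^k$ — the choice $a\geq 2^{n+1}$ makes the bound $\frac12$ work.

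With $\Ss$ in hand, the domination $M_\alpha^\D f(x) \leq C(n,\alpha) L_\alpha^\Ss f(x)$ follows by the standard argument: fix $x$ and a cube $Q\in\D$ containing $x$ with $|Q|^{\alpha/n}\avg{f}_Q>0$. There is a unique $k$ with $a^k < |Q|^{\alpha/n}\avg{f}_Q \leq a^{k+1}$, so $Q$ is contained in some stopping cube $Q_j^k\in\Ss$. Then $x\in Q_j^k$, and $x$ lies in $E(Q_\ell^m)$ for exactly one $(\ell,m)$ with $Q_j^k \subset Q_\ell^m$ — in fact letting $Q_\ell^m$ be the unique stopping cube containing $x$ with $x\in E(Q_\ell^m)$, one has $m\geq k$ and $|Q_\ell^m|^{\alpha/n}\avg{f}_{Q_\ell^m} > a^m \geq a^k \geq a^{-1}|Q|^{\alpha/n}\avg{f}_Q$. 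Hence $|Q|^{\alpha/n}\avg{f}_Q \leq a\cdot|Q_\ell^m|^{\alpha/n}\avg{f}_{Q_\ell^m}$, and the right side is exactly the nonzero term of $L_\alpha^\Ss f(x)$. Taking the supremum over all $Q\in\D$ containing $x$ gives the claim with $C(n,\alpha)=a=2^{n+1}$ (independent of $\alpha$ in this formulation). The main obstacle, and the only place care is genuinely needed, is justifying the existence of maximal stopping cubes at each level and the sparseness estimate (iii) with the fractional weight $|Q|^{\alpha/n}$ present — but since this weight is constant on each dyadic level and behaves multiplicatively under the comparison of a cube with its dyadic children, the classical argument goes through essentially verbatim, exactly as foreshadowed by the remark that the proof is "the same as" the $\alpha=0$ case.
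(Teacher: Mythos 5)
Your proposal is correct and follows essentially the same route as the paper: the same stopping-time construction (maximal dyadic cubes with $|Q|^{\alpha/n}\avg{f}_Q>a^k$), the same Chebyshev-plus-maximality estimate for sparseness, and the same level-set argument $\Omega_k\setminus\Omega_{k+1}=\bigcup_{P\in\Ss_k}E(P)$ for the upper bound, with only an immaterial change of constant ($a=2^{n+1}$ versus $2^{n+1-\alpha}$). The one slip, that the stopping cube $Q_\ell^m$ with $x\in E(Q_\ell^m)$ satisfies $Q_\ell^m\subseteq Q_j^k$ rather than the reverse containment you wrote, is harmless since your chain of inequalities only uses $m\geq k$; and the issue you flag about the existence of maximal cubes is handled exactly as you suggest (reduction to bounded, compactly supported $f$), just as in the paper's applications of the proposition.
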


\begin{proof}
The sets $E(Q)$ are pairwise disjoint and for every $x\in E(Q)$, 
$|Q|^{\frac{\alpha}{n}}\avg{f}_Q \leq M_\alpha^\D f(x)$, so the first
inequality follows at once.   To prove the second inequality,  fix
$a=2^{n+1-\alpha}$ and for each $k\in \Z$,  let
\[ \Omega_k = \big\{ x\in \R^n : M_\alpha^\D f(x) > a^k \big\}. \]
For every $x\in \Omega_k$ there exists $Q\in \D$ such that
$|Q|^{\frac{\alpha}{n}}\avg{f}_Q > a^k$.  Let $\Ss_k$ be the
collection of maximal, disjoint cubes with this property.  Such
maximal cubes
exist by our assumption on $f$.  Further, by maximality we must also have that for
each $P \in \Ss_k$, $a^k<|P|^{\frac{\alpha}{n}}\avg{f}_P \leq
2^{n-\alpha}a^k$, and
\[ \Omega_k = \bigcup_{P \in \Ss_k} P. \]
Let $\Ss= \bigcup_k \Ss_k$; we claim that $\Ss$ is sparse.  Clearly
these cubes are nested:  if $P'\in \Ss_{k+1}$, then there exists $P\in
\Ss_k$ such that $P'\subsetneq P$.   Therefore, if we fix $k\in \Z$
and $P\in \Ss_k$, and  consider the union of
cubes $P'\in \Ss$ with $P'\subsetneq P$, we may restrict the union to
$P'\in \Ss_{k+1}$.  Clearly these cubes
satisfy $|P'|\leq 2^{-n}|P|$.  Hence,
\begin{multline} \label{eqn:sparse-proof}
\Big|\bigcup_{\substack{P'\in \Ss\\P'\subsetneq P}}P'\Big|
=\sum_{\substack{P'\in \Ss_{k+1}\\P'\subsetneq P}}|P'| 
< \frac{1}{a^{k+1}}\sum_{\substack{P'\in \Ss_{k+1}\\P'\subsetneq
    P}}|P'|^{\frac{\alpha}{n}}\int_{P'} f(y)\,dy \\
\leq \frac{2^{-\alpha}}{a^{k+1}}|P|^{\frac{\alpha}{n}} \int_Pf(y)\,dy\leq \frac{2^{n-2\alpha}}{a}|P|=2^{-\alpha-1}|P|.
\end{multline}

To get the desired estimate, note first that by the definition of the
cubes in $\Ss$, for each $k\in \Z$,
\[ \Omega_k \setminus \Omega_{k+1} = \bigcup_{P\in \Ss_k} E(P).  \]
Therefore, we have that for each $x\in \R^n$, there exists $k$ such
that $x\in \Omega_k \setminus \Omega_{k+1}$, and so there exists $P\in
\Ss_k$ such that
\[ M_\alpha^\D f(x) \leq a^{k+1} 
\leq a |P|^{\frac{\alpha}{n}}\avg{f}_P \cdot \chi_{E(P)} =
C(n,\alpha) \sum_{P\in \Ss} |P|^{\frac{\alpha}{n}}\avg{f}_P \cdot
\chi_{E(P)}. \]
\end{proof}

The sparse operator associated with $I_\alpha^\D$ is nearly the same
as $L_\alpha^\Ss$ except that the characteristic function is for the
entire cube $Q$.  Given a dyadic grid $\D$ and a
sparse set $\Ss\subset \D$, we define
\[ I_\alpha^\Ss f(x) = \sum_{Q\in \Ss} 
|Q|^{\frac{\alpha}{n}}\avg{f}_Q \cdot \chi_Q(x). \]
When $\alpha=0$, this operator becomes the sparse Calder\'on-Zygmund operator
that plays a central role in Lerner's proof of the $A_2$
conjecture~\cite{Lern2012,MR3085756}.  The operators $I_\alpha^\Ss$ were implicit
in Sawyer and Wheeden~\cite{MR1175693}, P\'erez~\cite{MR1291534} and
Lacey, {\em et al.}~\cite{MR2652182}, and first appeared explicitly
in~\cite{MR3224572}, where the following result was proved.

\begin{prop} \label{prop:frac-sparse}
Given a dyadic grid $\D$ and a non-negative function $f$ such that
$\avg{f}_Q \rightarrow 0$ as $|Q|\rightarrow \infty$, there
exists a sparse set $\Ss=\Ss(f) \subset \D$ and a constant $C(n,\alpha)$
independent of $f$ such that for every $x\in \R^n$,
\[ I_\alpha^\Ss f(x) \leq  I_\alpha^\D f(x) \leq C(n,\alpha)   I_\alpha^\Ss f(x). \]
\end{prop}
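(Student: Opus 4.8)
The first inequality is trivial, since $\Ss\subset\D$ and all the summands in $I_\alpha^\D f$ and $I_\alpha^\Ss f$ are non-negative; the content is the reverse estimate. The plan is to run a Calder\'on--Zygmund stopping-time argument on $f$ itself --- the very construction underlying Proposition~\ref{prop:CZcubes} --- and then to exploit the fact that $\alpha>0$ to sum a geometric series.

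First I would construct $\Ss$. Fix $a=2^{n+1}$. The proof of Proposition~\ref{prop:CZcubes} uses only the three defining properties of a dyadic grid, so it applies verbatim with $\Delta$ replaced by $\D$: for each $k\in\Z$ let $\{Q_j^k\}\subset\D$ be the maximal cubes with $\avg{f}_{Q_j^k}>a^k$ (these exist by the decay hypothesis on $f$), so that $a^k<\avg{f}_{Q_j^k}\le 2^n a^k$, and put $\Ss=\{Q_j^k:j,k\}$. To check that $\Ss$ is sparse, observe that any $P\in\Ss$ with $P\subsetneq Q_j^k$ must be a stopping cube of some generation $m\ge k+1$: otherwise $Q_j^k$ would be a strictly larger cube than $P$ with average exceeding $a^m$, contradicting the maximality defining $P$. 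Hence $\bigcup_{P\in\Ss,\,P\subsetneq Q_j^k}P\subset\Omega_{k+1}\cap Q_j^k$, which by the second half of Proposition~\ref{prop:CZcubes} has measure at most $|Q_j^k|-|E_j^k|\le\frac12|Q_j^k|$. This is Definition~\ref{defn:sparse}.

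Next I would set up the domination. By the decay hypothesis, every $Q\in\D$ with $\avg{f}_Q>0$ is contained in some cube of $\Ss$; since the $\Ss$-cubes containing $Q$ are nested and have sidelengths bounded below by $\ell(Q)$, there is a smallest one, call it $\pi(Q)$. The key estimate is that if $\pi(Q)=R$ and $R$ is a stopping cube of generation $k$, then $\avg{f}_Q\le a^{k+1}\le a\,\avg{f}_R$. Indeed, if $\avg{f}_Q>a^{k+1}$ then (again by the decay hypothesis) $Q$ lies inside a stopping cube $M$ of generation $k+1$; comparing $M$ and $R$, which are nested because both contain $Q$, leads to a contradiction in every case: $M\subsetneq R$ contradicts the minimality of $R=\pi(Q)$; $M=R$ is impossible since $\avg{f}_R\le 2^na^k<a^{k+1}$; and $R\subsetneq M$ contradicts the maximality defining $R$ as a stopping cube of generation $k$.

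Finally I would reorganize the sum. Fix $x\in\R^n$ and group the cubes $Q\in\D$ containing $x$ with $\avg{f}_Q>0$ according to the value of $\pi(Q)$. For a fixed $R\in\Ss$ with $x\in R$, the cubes $Q\ni x$ with $\pi(Q)=R$ are precisely the smallest such cube together with its $\D$-ancestors up to $R$, forming a single chain of consecutive dyadic sidelengths bounded by $\ell(R)$, so
\[
\sum_{\substack{Q\in\D:\,\pi(Q)=R\\ x\in Q}}|Q|^{\frac{\alpha}{n}}
\le\sum_{i\ge 0}\big(2^{-i}\ell(R)\big)^{\alpha}
=\frac{|R|^{\frac{\alpha}{n}}}{1-2^{-\alpha}};
\]
this geometric series converges precisely because $\alpha>0$, which is also what makes $I_\alpha^\D$ finite in the first place. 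Combining this with the key estimate $\avg{f}_Q\le a\,\avg{f}_R$ gives
\[
I_\alpha^\D f(x)
=\sum_{\substack{R\in\Ss\\ x\in R}}\ \sum_{\substack{Q\in\D:\,\pi(Q)=R\\ x\in Q}}|Q|^{\frac{\alpha}{n}}\avg{f}_Q
\le\frac{2^{n+1}}{1-2^{-\alpha}}\sum_{\substack{R\in\Ss\\ x\in R}}|R|^{\frac{\alpha}{n}}\avg{f}_R
=C(n,\alpha)\,I_\alpha^\Ss f(x),
\]
as desired. I expect the main obstacle to be the key estimate $\avg{f}_Q\le a\,\avg{f}_{\pi(Q)}$: this is the stopping-time control preventing the $\D$-averages from outrunning the average of the principal cube, and it is where the choice $a>2^n$ is used. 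The surrounding issues --- that $\pi(Q)$ is well defined and that maximal stopping cubes exist at every level --- are minor and follow from the decay hypothesis on $f$ exactly as in Proposition~\ref{prop:CZcubes}.
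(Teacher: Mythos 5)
Your proof is correct and is essentially the paper's: you build $\Ss$ from exactly the same maximal stopping cubes (with $a=2^{n+1}$) and control the domination by the same convergent geometric series in $\alpha$, arriving at the same constant $2^{n+1}/(1-2^{-\alpha})$. The only difference is bookkeeping — you group each cube $Q$ by $\pi(Q)$, the smallest $\Ss$-cube containing it, and verify $\avg{f}_Q \le a\,\avg{f}_{\pi(Q)}$, whereas the paper first bins $Q$ into the classes $\Q_k = \{Q : a^k < \avg{f}_Q \le a^{k+1}\}$ and then groups by the unique $\Ss_k$-cube containing $Q$; the two partitions are interchangeable and lead to the same inner geometric sum.
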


\begin{proof}
  The first inequality is immediate for any subset $\Ss$ of $\D$.  To
  prove the second inequality, we first construct the sparse set
  $\Ss$.  The argument is very similar to the construction in
  Proposition~\ref{prop:dyadic-max}.  Let $a=2^{n+1}$.  For each $k\in
  \Z$ define
$$\Q_k=\big\{Q\in \D:a^k<\avg{f}_Q\leq a^{k+1}\big\}.$$
Then for every $Q\in \D$ such that $\avg{f}_Q \neq 0$, there
exists a unique $k$ such that $Q\in \Q_k$.   

Now define $\Ss_k$
to be the maximal disjoint cubes contained in
\[ \big\{P\in \D:  \avg{f}_P > a^{k}\big\}. \]
Such maximal cubes exist by our hypothesis on $f$.   It follows that
given any $Q\in \Q_k$, there exists $P\in \Ss_k$ such that $Q\subset
P$.   Furthermore, these cubes are nested:  if $P'\in \Ss_{k+1}$, then
it is contained in some $P\in \Ss_k$.  If we let $\Ss= \bigcup_k
\Ss_k$, then arguing as in inequality~\eqref{eqn:sparse-proof} we have that $\Ss$ is sparse.

We now prove the desired inequality.  Fix $x\in \R^n$; then
\[ 
 I^\D_\alpha f(x)
 = \sum_k \sum_{Q\in \Q_k} |Q|^{\frac{\alpha}{n}} \avg{f}_Q \cdot \chi_Q(x) 
 \leq \sum_{k} a^{k+1}\sum_{P\in \Ss_k} 
\sum_{\substack{Q\in \Q_k\\Q\subseteq P}} |Q|^{\frac{\alpha}{n}} \cdot \chi_Q(x).
\]
The inner sum can be evaluated:
\[ 
\sum_{\substack{Q\in \Q_k\\Q\subseteq P}} |Q|^{\frac{\alpha}{n}}
\cdot\chi_Q(x)
=\sum^\infty_{r=0}\sum_{\substack{Q\in \Q_k:Q\subseteq
    P\\\ell(Q)=2^{-r}\ell(P)}}
|Q|^{\frac{\alpha}{n}}\cdot\chi_Q(x)
=\frac{1}{1-2^{-\alpha}}|P|^{\frac{\alpha}{n}}\cdot\chi_{P}(x). 
\]
Thus we have that
\begin{multline*}
 \sum_{k} a^{k+1}\sum_{P\in \Ss_k} 
\sum_{\substack{Q\in \Q_k\\Q\subseteq P}} |Q|^{\frac{\alpha}{n}} \cdot
\chi_Q(x)
\leq C(\alpha) \sum_{k} a^{k+1}\sum_{P\in
  \Ss_k}|P|^{\frac{\alpha}{n}}\cdot\chi_{P}(x) \\
\leq C(n,\alpha) \sum_k \sum_{P\in
  \Ss_k}|P|^{\frac{\alpha}{n}}\avg{f}_P\cdot\chi_{P}(x) 
= C(n,\alpha) I_\alpha^\Ss f(x).
\end{multline*}
If we combine these estimates we get the desired inequality.
\end{proof}

\bigskip

We conclude this section with a key observation: 

\begin{quote} \textbf{In light of 
Propositions~\ref{prop:dyadic-max}, \ref{prop:dyadic-frac},
\ref{prop:max-sparse} and~\ref{prop:frac-sparse},  when proving
necessary and/or sufficient conditions for weighted norm inequalities
for  fractional maximal  or integral
operators, it suffices to prove the analogous inequalities for either
the associated dyadic or sparse operators.}   
\end{quote}
In the subsequent
sections we will use this fact repeatedly.   The ability to pass to a
dyadic operator will considerably simplify the proofs.  The choice to use the dyadic or sparse
operator will be determined by the details of the proof.

Matters are more complicated for commutators.
It is possible to reduce estimates for the dyadic commutator, or more
precisely,  the dyadic operator $C_b^{\D}$ defined in
Proposition~\ref{prop:dyadic-commutator}, to estimates for a sum defined over
a sparse set.  However, this reduction does not yield a
pointwise inequality and is dependent on the particular result to be
proved.  For an example of this argument, we refer the reader
to~\cite[Theorem~1.6]{cruz-moen2012}.
This difficultly plays a role in some of the open problems which we will discuss
below.

\section{Digression:  one weight inequalities}
\label{section:one-weight}

In this section we briefly turn away from the main topic of these notes, two weight norm
inequalities, to present some basic results on one weight norm
inequalities.
We do so for two reasons.  First, in this setting it is easier to see
the advantages of the reduction to dyadic operators; second,
a closer examination of the proofs in the one weight case will
highlight where the major obstacles will be in the two weight case.

\subsection*{The fractional maximal operator}
We first consider the fractional maximal operator.  The governing
weight class is a generalization of the Muckenhoupt $A_p$ weights, and
was introduced by Muckenhoupt and
Wheeden~\cite{muckenhoupt-wheeden74}.

\begin{definition} \label{defn:Apq}
Given $0<\alpha<n$, $1<p<\frac{n}{\alpha}$, and $q$ such that
$\frac{1}{p}-\frac{1}{q}=\frac{\alpha}{n}$, we say that a weight $w$
such that $0<w(x)<\infty$ a.e. is in $A_{p,q}$ if
\[  [w]_{A_{p,q}} = \sup_Q \left(\avgint_Q
  w^{q}\,dx\right)^{\frac{1}{q}}
\left(\avgint_Q w^{-p'}\,dx\right)^{\frac{1}{p'}}<\infty, \]
where the supremum is taken over all cubes $Q$.
When $p=1$ we say $w\in A_{1,q}$ if 
\[  [w]_{A_{1,q}} = \sup_Q \sup_{x\in Q} \left(\avgint_Q
  w^{q}\,dx\right)^{\frac{1}{q}} w(x)^{-1} < \infty. \]
\end{definition}

The $A_{1,q}$ condition is equivalent to assuming that $M_qw(x)=
M(w^q)(x)^{1/q}\leq [w]_{A_{1,q}}w(x)$, that is, $w^q \in A_1$.  (For
a proof of this when $q=1$,
see~\cite[Section~5.1]{garcia-cuerva-rubiodefrancia85}.)   More
generally, if $p>1$, we have that $w\in A_{p,q}$ if and only if $w^q\in
A_{1+\frac{q}{p'}}$; this follows at once from the definition.   By
symmetry we have that $w\in A_{p,q}$ if and only if $w^{-1} \in
A_{q',p'}$, and this is equivalent to $w^{-p'}\in
A_{1+\frac{p}{q'}}$. 

In our proofs we will keep track of the dependence on the
constant $[w]_{A_{p,q}}$; however, our proofs will not yield sharp
results.  For the exact dependence, see~\cite{cruz-moen2012,MR2652182}.  

\begin{theorem} \label{thm:weak-max-one}
Given $0<\alpha<n$, $1 \leq p<\frac{n}{\alpha}$, $q$ such that
$\frac{1}{p}-\frac{1}{q}=\frac{\alpha}{n}$, and a weight $w$, the following are
equivalent:
\begin{enumerate}
\setlength{\itemsep}{6pt}
\item $w\in A_{p,q}$;

\item for any $f\in L^p(w^p)$,
\[ \sup_{t>0} t \, w^q(\{ x\in \R^n : M_\alpha f(x) > t \})^{\frac{1}{q}}
\leq C(n,\alpha) [w]_{A_{p,q}}
\left(\int_\subRn |f(x)|^pw(x)^p\,dx\right)^{\frac{1}{p}}. \]
\end{enumerate}
\end{theorem}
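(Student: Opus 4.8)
The plan is to prove $(1)\Leftrightarrow(2)$ by passing to the dyadic operators using Proposition~\ref{prop:dyadic-max} and then to the linearized sparse operators $L_\alpha^\Ss$ via Proposition~\ref{prop:max-sparse}, so that the problem reduces to a weak-type estimate for a positive, essentially ``disjointly supported'' operator. For the direction $(1)\Rightarrow(2)$, first observe that since $M_\alpha f \approx \sum_{t=1}^{3^n} M_\alpha^{\D^t} f$ with dimensional constants, it suffices to bound each $w^q(\{M_\alpha^{\D^t}f > t\})$; fix one grid $\D = \D^t$. Given $\lambda > 0$, let $\{Q_j\}$ be the maximal dyadic cubes in $\D$ with $|Q_j|^{\alpha/n}\avg{|f|}_{Q_j} > \lambda$, so that $\{M_\alpha^\D f > \lambda\} = \bigcup_j Q_j$ and the $Q_j$ are pairwise disjoint. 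Then
\[
w^q(\{M_\alpha^\D f > \lambda\}) = \sum_j \int_{Q_j} w^q\,dx
= \sum_j |Q_j|\,\avg{w^q}_{Q_j}
< \sum_j \frac{1}{\lambda}\,|Q_j|^{1+\frac{\alpha}{n}}\avg{w^q}_{Q_j}\avg{|f|}_{Q_j}.
\]
Now apply Hölder's inequality in the form $\avg{|f|}_{Q_j} = \avg{|f|w\cdot w^{-1}}_{Q_j} \le \avg{|f|^p w^p}_{Q_j}^{1/p}\avg{w^{-p'}}_{Q_j}^{1/p'}$, and use the $A_{p,q}$ condition, i.e.\ $\avg{w^q}_{Q_j}^{1/q}\avg{w^{-p'}}_{Q_j}^{1/p'} \le [w]_{A_{p,q}}$, to absorb the bad factors. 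Since $1+\frac{\alpha}{n} = \frac1q + \frac1{p'} + \frac{\alpha}{n}\cdot\text{(bookkeeping)}$ — more precisely one checks the exponents of $|Q_j|$ combine correctly because $\frac1p - \frac1q = \frac{\alpha}{n}$ — one arrives at
\[
w^q(\{M_\alpha^\D f > \lambda\})
\le \frac{[w]_{A_{p,q}}^{q}}{\lambda^{q}} \sum_j |Q_j|\avg{w^q}_{Q_j}\avg{|f|^p w^p}_{Q_j}^{q/p}\Big/(\cdots),
\]
and since $q/p \ge 1$ one can pull the sum inside (using disjointness of the $Q_j$ and that $\ell^1 \hookrightarrow \ell^{q/p}$) to get $\lambda^q w^q(\{M_\alpha^\D f>\lambda\}) \le C[w]_{A_{p,q}}^q \big(\int |f|^p w^p\big)^{q/p}$, which is the claimed weak-type bound after taking the supremum over $\lambda$ and summing over the finitely many grids.

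For the converse $(2)\Rightarrow(1)$, I would test the weak-type inequality on the functions $f = w^{-p'}\chi_Q$ for an arbitrary fixed cube $Q$. On $Q$ itself one has $M_\alpha f(x) \ge |Q|^{\alpha/n}\avg{w^{-p'}}_Q$ for $x\in Q$ (using that $Q$ sits inside a cube of comparable size in one of the grids $\D^t$, or more simply testing directly against $Q$ if one works with the non-dyadic $M_\alpha$), so choosing $t$ slightly below $|Q|^{\alpha/n}\avg{w^{-p'}}_Q$ gives
\[
|Q|^{\frac{\alpha}{n}}\avg{w^{-p'}}_Q\, w^q(Q)^{\frac1q} \le C[w]_{A_{p,q}}\Big(\int_Q w^{-p'p}w^p\,dx\Big)^{\frac1p}
= C[w]_{A_{p,q}}\Big(\int_Q w^{-p'}\,dx\Big)^{\frac1p},
\]
since $-p'p + p = -p'$. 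Dividing through by $(\int_Q w^{-p'})^{1/p} = |Q|^{1/p}\avg{w^{-p'}}_Q^{1/p}$ and rearranging, the exponents of $|Q|$ and of $\avg{w^{-p'}}_Q$ line up (again using $\frac1p-\frac1q=\frac{\alpha}{n}$, hence $\frac{\alpha}{n} + \frac1p - 1 = -\frac1{p'}$) to yield exactly $\avg{w^q}_Q^{1/q}\avg{w^{-p'}}_Q^{1/p'} \le C[w]_{A_{p,q}}$, and taking the supremum over $Q$ gives $w\in A_{p,q}$. The case $p=1$ is handled the same way, testing on $f = \chi_Q$ and using the definition of $[w]_{A_{1,q}}$ together with $\essinf$ in place of the $L^{-p'}$ average.

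The step I expect to be the main obstacle — or at least the most error-prone — is the bookkeeping in $(1)\Rightarrow(2)$: tracking the powers of $|Q_j|$ through Hölder's inequality and the $A_{p,q}$ condition, and in particular justifying the passage from the $\ell^1$ sum $\sum_j |Q_j|\avg{w^q}_{Q_j}\avg{|f|^pw^p}_{Q_j}^{q/p}$ to $\big(\sum_j \int_{Q_j}|f|^pw^p\big)^{q/p}$, which relies on $q \ge p$ and on the disjointness of the cubes together with the pointwise bound $\avg{w^q}_{Q_j}\avg{w^{-p'}}_{Q_j}^{q/p'} \le [w]_{A_{p,q}}^q$. Using the sparse linearization $L_\alpha^\Ss$ from Proposition~\ref{prop:max-sparse} streamlines this slightly, since the sets $E(Q)$ are genuinely disjoint with $|E(Q)|\ge\frac12|Q|$, but the exponent arithmetic is unchanged. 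Everything else is routine.
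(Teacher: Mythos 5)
For $p>1$ your argument is, modulo bookkeeping, exactly the paper's proof: reduce to a single dyadic grid, take the maximal dyadic cubes $Q_j$ with $|Q_j|^{\alpha/n}\avg{f}_{Q_j}>\lambda$ (so the level set is their disjoint union), apply H\"older with $w,\,w^{-1}$ on each cube, invoke the $A_{p,q}$ condition through the identity $q-q\frac{\alpha}{n}=1+\frac{q}{p'}$, and sum using $q/p\geq 1$ and disjointness; and the necessity for $p>1$ by testing on $f=w^{-p'}\chi_Q$ is also the paper's argument verbatim (passing to $L_\alpha^\Ss$ is not needed, since the maximal cubes already give disjointness). Two cautions on the bookkeeping you flagged: your first display keeps only one power of $\lambda^{-1}$, which has the wrong homogeneity; you must bound $1\leq \lambda^{-q}\big(|Q_j|^{\alpha/n}\avg{f}_{Q_j}\big)^q$ on each cube \emph{before} applying H\"older (your second display, with $\lambda^{-q}$ and the exponent $q/p$, is the correct version, and then the exponents of $|Q_j|$ close exactly as above). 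Also the parenthetical identity ``$\frac{\alpha}{n}+\frac1p-1=-\frac1{p'}$'' is false as written; what the necessity computation actually uses is $\frac{\alpha}{n}+\frac1q-\frac1p=0$ together with $1-\frac1p=\frac1{p'}$, and with those the rearrangement does give $\avg{w^q}_Q^{1/q}\avg{w^{-p'}}_Q^{1/p'}\leq C$.

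The genuine gap is the endpoint $p=1$, which the theorem includes. You do not address sufficiency there (it is easy: drop H\"older and use the pointwise bound $\big(\avgint_Q w^q\,dx\big)^{1/q}\leq [w]_{A_{1,q}}w(y)$ for $y\in Q$ to estimate $\int_Q f\,dy$, as the paper does). More seriously, for the necessity you propose testing on $f=\chi_Q$ ``with $\essinf$ in place of the $L^{-p'}$ average'': this does not work. Testing with $\chi_Q$ only yields $\big(\avgint_Q w^q\,dx\big)^{1/q}\leq C\avgint_Q w\,dx$, and you cannot replace the average on the right by $\essinf_Q w$ because the inequality goes the wrong way; the averaged condition is strictly weaker than $A_{1,q}$ (for instance $w(x)=|x|^{a}$ on $\R$ with small $a>0$ satisfies it, yet $w^q=|x|^{aq}\notin A_1$). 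The fix, which is what the paper does, is to test with $f=\chi_P$ for an arbitrary subcube $P\subset Q$, obtaining $\big(\avgint_Q w^q\,dx\big)^{1/q}\leq C\avgint_P w\,dx$, and then shrink $P$ to a Lebesgue point $x_0$ of $w$ to conclude $\big(\avgint_Q w^q\,dx\big)^{1/q}\leq Cw(x_0)$ for a.e.\ $x_0\in Q$; alternatively one can test with $\chi_E$, $E=\{x\in Q: w(x)<\essinf_Q w+\epsilon\}$. With that repair your proof coincides with the paper's.
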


The sufficiency of the $A_{p,q}$ condition was first proved
in~\cite{muckenhoupt-wheeden74}.   Our proof is basically the same as
theirs, but using the sparse operator $L_\alpha^\Ss$ obviates the need
for a covering lemma argument---this is ``hidden'' in the construction
of the sparse operator.  The necessity of the $A_{p,q}$
condition was not directly considered but was implicit in their
results for the fractional integral.  Our argument below is adapted
from the case $\alpha=0$
in~\cite[Section~5.1]{garcia-cuerva-rubiodefrancia85}. 

\begin{proof}
To show the sufficiency of the $A_{p,q}$ condition,    without loss
of generality we may assume $f$ is non-negative.  It is
straightforward to show that if the sequence $\{f_k\}$ increases
pointwise a.e. to $f$, then $M_\alpha f_k$ increases to $M_\alpha f$,
so we may assume that $f$ is bounded and has compact support.  (For
the details of this argument when $\alpha=0$,
see~\cite[Lemma~3.30]{cruz-fiorenza-book}.)   Further, It will 
suffice to fix a dyadic grid $\D$ and prove the
weak type inequality for $M_\alpha^\D$.    

We first
consider the case when $p>1$.
Fix $t>0$.  If $x\in\R^n$ is such that $M_\alpha^\D f(x)>t$, then
there exists a cube $Q\in \D$ such that
$|Q|^{\frac{\alpha}{n}}\avg{f}_Q>t$.  Let $\Q$ be the set of
maximal disjoint cubes in $\D$ with this property.  (Such cubes exist by our
assumptions on $f$.)  Then by H\"older's inequality,
\begin{align*}
& t^q\,w^q(\{ x\in \R^n : M_\alpha^\D f(x) > t \}) \\
& \qquad \qquad = \sum_{Q\in \Q} w^q (Q) \\
& \qquad \qquad \leq  \sum_{Q\in \Q} w^q (Q)
\big(|Q|^{\frac{\alpha}{n}}\avg{f}_{Q}\big)^q \\
& \qquad \qquad \leq  \sum_{Q\in \Q} |Q|^{q\frac{\alpha}{n}-q} w^q (Q) 
\left(\int_{Q} f(y)w(y)w(y)^{-1}\,dy\right)^{q} \\
& \qquad \qquad \leq  \sum_{Q\in \Q} |Q|^{q\frac{\alpha}{n}-q} w^q (Q) 
\left(\int_{Q} w(y)^{-p'}\,dy\right)^{\frac{q}{p'} }
\left(\int_{Q} f(y)^p w(y)^p\,dy\right)^{\frac{q}{p} }; \\
\intertext{by our choice of $q$, $q-q\frac{\alpha}{n}=1+\frac{q}{p'}$,
  so by the $A_{p,q}$ condition,}
& \qquad \qquad \leq [w]_{A_{p,q}}^q 
\sum_{Q\in \Q} \left(\int_{Q} f(y)^p w(y)^p\,dy\right)^{\frac{q}{p} }\\
& \qquad \qquad \leq [w]_{A_{p,q}}^q 
\left( \sum_{Q\in \Q} \int_{Q} f(y)^p w(y)^p\,dy\right)^{\frac{q}{p} }\\
& \qquad \qquad \leq [w]_{A_{p,q}}^q \left( \int_\subRn f(y)^p w(y)^p\,dy\right)^{\frac{q}{p} }.
\end{align*}
The second to last inequality holds because $\frac{q}{p} \geq 1$ and
the final inequality since the cubes in $\Q$ are pairwise disjoint by
maximality.  This completes the proof of the weak type inequality when
$p>1$.  

When $p=1$ the same proof works, omitting H\"older's
inequality and using the pointwise inequality in the $A_{1,q}$
condition.

\medskip

To prove the necessity of the $A_{p,q}$ condition, we again first consider
the case $p>1$.   Fix a cube $Q$ and let $f=w^{-p'}\chi_Q$.  Then for
$x\in Q$, $M_\alpha f(x) \geq
|Q|^{\frac{\alpha}{n}}\avg{w^{-p'}}_Q$.  Then for all
$t<|Q|^{\frac{\alpha}{n}}\avg{w^{-p'}}_Q$,  the weak type inequality
implies that
\[ t^q w^q(Q) \leq C\left(\int_Q f(x)^p w(x)^p\,dx\right)^{\frac{q}{p}}
= C|Q|^{\frac{q}{p}}\left(\avgint_Q  w(x)^{-p'}\,dx\right)^{\frac{q}{p}}. \]
Taking the supremum over all such $t$ yields
\[ |Q|^{q\frac{\alpha}{n}} \int_Q w(x)^q\,dx \left( \avgint_Q
w(x)^{-p'}\,dx\right)^q
\leq C|Q|^{\frac{q}{p}}\left(\avgint_Q
    w(x)^{-p'}\,dx\right)^{\frac{q}{p}}, \]
and rearranging terms we get the $A_{p,q}$ condition on
$Q$ with a uniform constant.  

When $p=1$ we repeat the above argument but now with $f=\chi_P$, where
$P\subset Q$ is any cube.  Then we get
\[ \avgint_Q w(x)^q\,dx \leq C\left( \avgint_P w(x)^p\,dx
\right)^{\frac{q}{p}}. \]
Let $x_0$ be a Lebesgue point of $w^p$ in $Q$, and take the limit as
$P\rightarrow \{x_0\}$; by the Lebesgue differentiation theorem we get
\[ \avgint_Q w(x)^q\,dx \leq  Cw(x_0)^q. \]
The $A_{1,q}$ condition follows at once.
\end{proof}

The weak type inequality and its proof have two consequences.  First, the proof when $p=1$, holds for all $p$ and we
can replace the cube $P$ by any measurable set $E\subset Q$.  Doing
this yields an $A_\infty$ type inequality: 
\begin{equation} \label{eqn:frac-Ainfty}
 \frac{|E|}{|Q|} \leq [w]_{A_{p,q}}\left(
  \frac{w^q(E)}{w^q(Q)}\right)^{\frac{1}{q}}.
\end{equation}
Second, though we
assumed {\em a priori} in the definition of the $A_{p,q}$ condition that
$0<w(x)<\infty$ a.e., we can use this inequality to show that  this in fact is a consequence of the weak type
inequality.    For the details of the proof when $\alpha=0$,
see~\cite[Section~5.1]{garcia-cuerva-rubiodefrancia85}.   We note in
passing that the usual $A_\infty$ condition, which exchanges the roles
of $w^q$ and Lebesgue measure in~\eqref{eqn:frac-Ainfty}, is more
difficult to prove since it also requires the reverse H\"older inequality.  

\medskip

To prove the strong type inequality we could use the fact that
$w^q\in A_{1+\frac{p'}{q}}$ implies $w^q\in
A_{1+\frac{p'}{q}-\epsilon}$ for some $\epsilon>0$ to apply
Marcinkiewicz interpolation.  This is the approach used
in~\cite{muckenhoupt-wheeden74} and it requires the reverse
H\"older inequality.  

Instead, here we are going to give a
direct proof that avoids the reverse H\"older inequality.  It is based on an argument for the Hardy-Littlewood maximal
operator due to Christ and Fefferman~\cite{MR684636} that only
uses~\eqref{eqn:frac-Ainfty}.    We also  introduce an auxiliary operator, a weighted dyadic fractional
maximal operator.   Such weighted operators when $\alpha=0$ have
played an important role in the proof of sharp constant inequalities:
see~\cite{dcu-martell-perez,lernerP,MR3085756}.  Given a non-negative Borel measure $\sigma$ and a
dyadic grid $\D$, define
\[ M_{\sigma,\alpha}^\D f(x) = \sup_{Q\in \D} 
|Q|^{\frac{\alpha}{n}} \avgint_Q |f(y)|\,d\sigma \cdot \chi_Q(x). \]
If $\alpha=0$ we simply write $M_\sigma^\D$. 

\begin{lemma} \label{lemma:wtd-max}
Given $0 \leq \alpha<n$, $1\leq p <\frac{n}{\alpha}$,  $q$ such that
$\frac{1}{p}-\frac{1}{q}=\frac{\alpha}{n}$, a dyadic grid $\D$, and a non-negative Borel
measure $\sigma$, 
\[ \sup_{t>0} t\, \sigma(\{ x \in\R^n : M_{\sigma,\alpha}^\D f(x) >
t\})^{\frac{1}{q}}
\leq \left(\int_\subRn |f(x)|^p\,d\sigma\right)^{\frac{1}{p}}. \]
Furthermore, if $p>1$, 
\[  \left(\int_\subRn M_{\sigma,\alpha}^\D f(x)^q\,d\sigma\right)^{\frac{1}{q}}
\leq C(p,q) \left(\int_\subRn
  |f(x)|^p\,d\sigma\right)^{\frac{1}{p}}. \]
\end{lemma}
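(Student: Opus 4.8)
The plan is to prove the two estimates separately, treating the weak-type bound first since the strong-type bound for $p>1$ will follow from it by an interpolation-free argument, or alternatively by a direct Calder\'on--Zygmund-style decomposition. For the \textbf{weak-type inequality}, I would mimic the sufficiency half of the proof of Theorem~\ref{thm:weak-max-one} but with Lebesgue measure $dx$ replaced throughout by $d\sigma$. Fix $t>0$; if $M_{\sigma,\alpha}^\D f(x)>t$ then there is $Q\in\D$ with $|Q|^{\alpha/n}\avg{f}_{Q,\sigma}>t$, and letting $\Q$ be the maximal such cubes (which exist since $f\in L^p(\sigma)$ forces $\avg{f}_{Q,\sigma}\to 0$ as $|Q|\to\infty$ — this needs a brief remark), these are pairwise disjoint and cover the level set. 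Then
\begin{align*}
t^q\,\sigma\bigl(\{M_{\sigma,\alpha}^\D f>t\}\bigr)
&= \sum_{Q\in\Q} t^q\,\sigma(Q)
\leq \sum_{Q\in\Q} |Q|^{q\alpha/n}\,\sigma(Q)\,\avg{f}_{Q,\sigma}^{\,q} \\
&= \sum_{Q\in\Q} |Q|^{q\alpha/n}\,\sigma(Q)^{1-q}\Bigl(\int_Q |f|\,d\sigma\Bigr)^{q}
\leq \sum_{Q\in\Q} |Q|^{q\alpha/n}\,\sigma(Q)^{1-q+q/p'}\Bigl(\int_Q |f|^p\,d\sigma\Bigr)^{q/p},
\end{align*}
where the last step is H\"older's inequality with exponents $p,p'$ applied to $\int_Q|f|\,d\sigma=\int_Q|f|\cdot 1\,d\sigma$. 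Now observe $1-q+q/p' = 1-q/p$, and by the choice of $q$ one has $q\alpha/n = q/p - 1$, so the powers of $|Q|$ and $\sigma(Q)$ cancel \emph{exactly}: the summand is just $\bigl(\int_Q|f|^p\,d\sigma\bigr)^{q/p}$. Since $q/p\geq 1$ we may pull the exponent out of the sum (using $\sum a_j^{q/p}\leq(\sum a_j)^{q/p}$ for $a_j\geq 0$), and disjointness of the $Q\in\Q$ gives $\sum_Q\int_Q|f|^p\,d\sigma\leq\int_{\R^n}|f|^p\,d\sigma$. Taking $q$-th roots finishes the weak bound with constant exactly $1$. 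When $p=1$ the H\"older step is omitted and $1-q+q/p'=1-q$, $q\alpha/n=q-1$ (note $q=n/(n-\alpha)$ here), and the same cancellation occurs.

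For the \textbf{strong-type inequality} when $p>1$, the cleanest route is again the dyadic Calder\'on--Zygmund / sparse decomposition with respect to $\sigma$. By Proposition~\ref{prop:max-sparse} applied with the measure $\sigma$ in place of $dx$ — or by repeating its proof verbatim — there is a sparse family $\Ss\subset\D$ (sparse with respect to $\sigma$, i.e. with disjoint sets $E_\sigma(Q)$ satisfying $\sigma(E_\sigma(Q))\geq\frac12\sigma(Q)$) such that $M_{\sigma,\alpha}^\D f(x)\leq C\sum_{Q\in\Ss}|Q|^{\alpha/n}\avg{f}_{Q,\sigma}\chi_{E_\sigma(Q)}(x)$ pointwise; since the $E_\sigma(Q)$ are disjoint, raising to the $q$ and integrating $d\sigma$ gives
\[
\int_{\R^n} M_{\sigma,\alpha}^\D f\,^q\,d\sigma
\leq C^q\sum_{Q\in\Ss} |Q|^{q\alpha/n}\avg{f}_{Q,\sigma}^{\,q}\,\sigma(E_\sigma(Q))
\leq C^q\sum_{Q\in\Ss} |Q|^{q\alpha/n}\avg{f}_{Q,\sigma}^{\,q}\,\sigma(Q).
\]
This last sum is identical in form to the weak-type sum over $\Q$, except $\Q$ is replaced by the sparse family $\Ss$, whose cubes need not be disjoint. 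The same H\"older estimate as above bounds each term by $\bigl(\int_Q|f|^p\,d\sigma\bigr)^{q/p}$; the loss of disjointness is recovered by the sparseness. Precisely, $\sum_{Q\in\Ss}\bigl(\int_Q|f|^p\,d\sigma\bigr)^{q/p}$: one writes $\int_Q|f|^p\,d\sigma=\sigma(Q)\avg{|f|^p}_{Q,\sigma}\leq 2\,\sigma(E_\sigma(Q))\,M_\sigma^\D(|f|^p)(x)$ for any $x\in E_\sigma(Q)$, so summing and using disjointness of the $E_\sigma(Q)$ yields $\sum_Q\int_Q|f|^p\,d\sigma\cdot(\cdots)$; more directly, one uses the elementary estimate that for a sparse family, $\sum_{Q\in\Ss}\bigl(\int_Q g\,d\sigma\bigr)^{s}\leq C(s)\int_{\R^n}\bigl(M_\sigma^\D g\bigr)^{s}\,d\sigma$ whenever $s\geq 1$, applied with $g=|f|^p$ and $s=q/p$, and then invokes the (unweighted-exponent, i.e. $\alpha=0$, $p\to q/p>1$) weighted dyadic maximal theorem $\|M_\sigma^\D g\|_{L^{s}(\sigma)}\lesssim\|g\|_{L^{s}(\sigma)}$. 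Since $g=|f|^p\in L^{q/p}(\sigma)$ precisely when $f\in L^p(\sigma)$, and $\|g\|_{L^{q/p}(\sigma)}^{1/p}=\|f\|_{L^p(\sigma)}$, taking $q$-th roots gives the claim with $C=C(p,q)$.

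The \textbf{main obstacle} is the $\alpha=0$ weighted dyadic maximal estimate $\|M_\sigma^\D g\|_{L^s(\sigma)}\lesssim\|g\|_{L^s(\sigma)}$ for $s>1$ invoked at the end — but this is not really an obstacle, since it is a standard fact (the dyadic maximal operator is bounded on $L^s$ of \emph{any} measure, with constant $s'$, by the usual distributional/weak-$(1,1)$-plus-interpolation argument, or directly by the sparse decomposition above specialized to $\alpha=0$). So one can either cite it or give the two-line self-contained proof: the weak-$(1,1)$ bound $\sigma(\{M_\sigma^\D g>t\})\leq t^{-1}\int_{\{M_\sigma^\D g>t\}}|g|\,d\sigma$ comes from summing $\sigma(Q)\leq t^{-1}\int_Q|g|\,d\sigma$ over maximal cubes, and Marcinkiewicz interpolation between this and the trivial $L^\infty$ bound finishes it. The genuinely delicate bookkeeping — making sure the powers of $|Q|$ and $\sigma(Q)$ cancel — is handled once and for all by the identity $q\alpha/n+q/p'=q-1$, i.e. $\frac1p-\frac1q=\frac\alpha n$, which is exactly the hypothesis relating $p$, $q$, and $\alpha$; I would flag this cancellation explicitly since it is the one computation worth seeing. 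Everything else is a transcription of the proofs of Theorem~\ref{thm:weak-max-one} and Proposition~\ref{prop:max-sparse} with $dx$ replaced by $d\sigma$.
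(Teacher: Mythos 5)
Your weak-type argument is essentially the paper's (the paper simply says it is "essentially the same as the proof of Theorem~\ref{thm:weak-max-one}"), and the computation closes, but with two caveats. First, the cancellation you advertise is between powers of two \emph{different} quantities: with the factor $|Q|^{\alpha/n}$ (Lebesgue measure) that you carry through, the summand is $|Q|^{q/p-1}\sigma(Q)^{1-q/p}\bigl(\int_Q|f|^p\,d\sigma\bigr)^{q/p}$, and the ratio $(|Q|/\sigma(Q))^{q/p-1}$ does not disappear; indeed with that normalization the stated inequality is false for general Borel $\sigma$ (take $\sigma=\epsilon\,dx$ restricted to a unit dyadic cube and $f$ its characteristic function, then let $\epsilon\to 0$). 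The cancellation is exact precisely when the factor is $\sigma(Q)^{\alpha/n}$, which is what the paper's own proof (the H\"older step giving $M^\D_{\sigma,\alpha}:L^{n/\alpha}(\sigma)\to L^\infty$) and its applications in Theorems~\ref{thm:strong-max-one} and \ref{thm:strong-frac-one} actually use; you should say this rather than assert that $|Q|$ and $\sigma(Q)$ cancel. Second, your parenthetical justification for the existence of maximal cubes is not correct: $f\in L^p(\sigma)$ does not force $\sigma(Q)^{\alpha/n}\avg{f}_{Q,\sigma}\to 0$ as $Q$ grows (consider a finite measure $\sigma$ and $f$ essentially constant). The standard repair is to restrict to cubes with $\ell(Q)\le 2^N$, take maximal cubes there, and let $N\to\infty$ by monotone convergence.

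The strong-type half is where you genuinely diverge from the paper, and as written it does not close. The paper interpolates: H\"older gives $M^\D_{\sigma,\alpha}:L^{n/\alpha}(\sigma)\to L^\infty$, and off-diagonal Marcinkiewicz interpolation with the weak $(1,\frac{n}{n-\alpha})$ endpoint yields the strong $(p,q)$ bound. You instead dominate by a $\sigma$-sparse sum and, after H\"older in each cube, are left needing $\sum_{Q\in\Ss}\bigl(\int_Q|f|^p\,d\sigma\bigr)^{q/p}\lesssim\|f\|_{L^p(\sigma)}^q$. That is false for sparse families: with $\sigma$ Lebesgue, $f=\chi_{[0,1]}$ and the $\sigma$-sparse chain $Q_k=[0,2^k]$, the left-hand side diverges while the right-hand side is $1$. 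The "elementary estimate" $\sum_{Q\in\Ss}(\int_Qg\,d\sigma)^s\le C\int(M^\D_\sigma g)^s\,d\sigma$ you invoke fails for the same reason; the true Carleson-embedding statement carries the compensating factor $\sigma(Q)^{1-s}$, i.e. it is $\sum_{Q\in\Ss}\avg{g}_{Q,\sigma}^s\,\sigma(Q)\le C(s)\int g^s\,d\sigma$. The final identification is also wrong: $|f|^p\in L^{q/p}(\sigma)$ is equivalent to $f\in L^q(\sigma)$, not $L^p(\sigma)$, and $\|\,|f|^p\|_{L^{q/p}(\sigma)}^{1/p}=\|f\|_{L^q(\sigma)}$. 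The structural point is that in the weak-type argument the H\"older step is affordable because the selected cubes are disjoint; for a nested sparse family you must retain a factor $\sigma(E(Q))$ in order to sum, and once H\"older has consumed the full $\sigma(Q)$ nothing is left to sum against (keeping it leads instead to needing $M^\D_\sigma$ bounded on $L^1(\sigma)$, which fails). A secondary issue: repeating Proposition~\ref{prop:max-sparse} "verbatim with $\sigma$ in place of $dx$" gives the pointwise domination with pairwise disjoint sets $E(Q)$, but not $\sigma(E(Q))\ge\frac12\sigma(Q)$, since the sparseness estimate \eqref{eqn:sparse-proof} uses that a dyadic cube has Lebesgue measure $2^{-n}$ times its parent's, which has no analogue for general $\sigma$. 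The paper's interpolation argument avoids all of these difficulties and is the clean way to finish.
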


\begin{proof}
The proof of the weak $(1,q)$ inequality for $M_{\sigma,\alpha}^\D$ is essentially the same as the
proof of Theorem~\ref{thm:weak-max-one}.  By H\"older's inequality we
have for any cube $Q\in \D$,
\[ \sigma(Q)^{\frac{\alpha}{n}}\avgint_Q |f(x)|\,d\sigma 
\leq \sigma(Q)^{\frac{\alpha}{n}}\left(\avgint_Q
  |f(x)|^{\frac{n}{\alpha}}\,d\sigma \right)^{\frac{\alpha}{n}} 
\leq \|f\|_{L^{\frac{n}{\alpha}}(\sigma)}, \]
which immediately implies that $M_{\sigma,\alpha}^\D :
L^{\frac{n}{\alpha}}(\sigma) \rightarrow L^\infty$. 
The strong $(p,q)$ inequality
then follows from off-diagonal Marcinkiewicz
interpolation~\cite[Chapter V, Theorem~2.4]{stein-weiss71}. 
\end{proof}

\begin{theorem} \label{thm:strong-max-one}
Given $0<\alpha<n$, $1<p<\frac{n}{\alpha}$, $q$ such that
$\frac{1}{p}-\frac{1}{q}=\frac{\alpha}{n}$, and a weight $w$, the following are
equivalent:
\begin{enumerate}
\setlength{\itemsep}{6pt}
\item $w\in A_{p,q}$;

\item for any $f\in L^p(w^p)$,
\[ \left(\int_\subRn M_\alpha f(x)^q w(x)^q\,dx\right) ^{\frac{1}{q}}
\leq C(n,\alpha,p,[w]_{A_{p,q}})
\left(\int_\subRn |f(x)|^pw(x)^p\,dx\right)^{\frac{1}{p}}. \]
\end{enumerate}
\end{theorem}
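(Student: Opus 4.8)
The plan is to prove the two implications separately; the necessity is essentially free and the sufficiency is where the work lies. For $(2)\Rightarrow(1)$: by Chebyshev's inequality the strong $(p,q)$ bound in~(2) dominates the weak $(p,q)$ bound in part~(2) of Theorem~\ref{thm:weak-max-one}, so $w\in A_{p,q}$ by that theorem.

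For the sufficiency $(1)\Rightarrow(2)$, I would first reduce to a sparse operator. By Proposition~\ref{prop:dyadic-max} it suffices to bound $M_\alpha^\D\colon L^p(w^p)\to L^q(w^q)$ for a fixed dyadic grid $\D$, and (as in the proof of Theorem~\ref{thm:weak-max-one}) by monotone convergence we may assume $f\geq0$ is bounded with compact support, so $\avg{f}_Q\to0$ as $|Q|\to\infty$. Proposition~\ref{prop:max-sparse} then produces a sparse $\Ss=\Ss(f)\subset\D$ with $M_\alpha^\D f\leq C\,L_\alpha^\Ss f$, so it is enough to show $\|L_\alpha^\Ss f\|_{L^q(w^q)}\leq C\|f\|_{L^p(w^p)}$. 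Now set $\sigma=w^{-p'}\,dx$ and $g=fw^{p'}$; then $\int_\subRn g^p\,d\sigma=\int_\subRn f^pw^p\,dx$ and $\avg{f}_Q=\frac{\sigma(Q)}{|Q|}\avgint_Q g\,d\sigma$, so that
\[ |Q|^{\frac{\alpha}{n}}\avg{f}_Q=\Big(\frac{\sigma(Q)}{|Q|}\Big)^{1-\frac{\alpha}{n}}\,\sigma(Q)^{\frac{\alpha}{n}}\avgint_Q g\,d\sigma, \]
and, since the sets $E(Q)$, $Q\in\Ss$, are pairwise disjoint, $\int_\subRn(L_\alpha^\Ss f)^q w^q\,dx=\sum_{Q\in\Ss}\big(|Q|^{\alpha/n}\avg{f}_Q\big)^q w^q(E(Q))$.

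The heart of the matter is the Christ--Fefferman argument. Bounding $w^q(E(Q))\leq w^q(Q)$ and applying the $A_{p,q}$ condition on $Q$ — the exponent identity $q(1-\tfrac{\alpha}{n})-\tfrac{q}{p'}=1$, which is just a rephrasing of $\tfrac1p-\tfrac{\alpha}{n}=\tfrac1q$, makes all Lebesgue-measure factors cancel — gives
\[ \big(|Q|^{\frac{\alpha}{n}}\avg{f}_Q\big)^q w^q(E(Q))\leq[w]_{A_{p,q}}^q\,\sigma(Q)\Big(\sigma(Q)^{\frac{\alpha}{n}}\avgint_Q g\,d\sigma\Big)^q. \]
Two observations then close the estimate. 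First, since $|E(Q)|\geq\tfrac12|Q|$ and $w^{-1}\in A_{q',p'}$ with $(w^{-1})^{p'}=\sigma$, the $A_\infty$-type inequality~\eqref{eqn:frac-Ainfty} applied to $w^{-1}$ yields $\sigma(Q)\leq C\,\sigma(E(Q))$. Second, $\sigma(Q)^{\alpha/n}\avgint_Q g\,d\sigma\leq M_{\sigma,\alpha}^\D g(x)$ for every $x\in E(Q)$. Hence the right-hand side above is at most $C[w]_{A_{p,q}}^q\int_{E(Q)}(M_{\sigma,\alpha}^\D g)^q\,d\sigma$, and summing over $Q\in\Ss$, using disjointness of the $E(Q)$, bounds $\int_\subRn(L_\alpha^\Ss f)^q w^q\,dx$ by $C[w]_{A_{p,q}}^q\int_\subRn(M_{\sigma,\alpha}^\D g)^q\,d\sigma$. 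By Lemma~\ref{lemma:wtd-max}, applied with this pair $p,q$ and the same $\alpha$, this is at most $C\big(\int_\subRn g^p\,d\sigma\big)^{q/p}=C\big(\int_\subRn f^pw^p\,dx\big)^{q/p}$. Taking $q$-th roots and combining with Propositions~\ref{prop:dyadic-max} and~\ref{prop:max-sparse} completes the proof.

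The step needing care — and the reason both~\eqref{eqn:frac-Ainfty} and the weighted maximal operator are introduced — is that the cubes $Q\in\Ss$ genuinely overlap, so one cannot estimate $\sum_{Q\in\Ss}\int_Q(\cdots)$ by $\int_\subRn(\cdots)$. Trading $\sigma(Q)$ for $\sigma(E(Q))$ via~\eqref{eqn:frac-Ainfty}, dominating the leftover $\sigma$-average by $M_{\sigma,\alpha}^\D g$, and only then invoking the disjointness of the $E(Q)$ is exactly what resolves this; it is also what lets the argument avoid the reverse H\"older inequality that the Marcinkiewicz-interpolation route of Muckenhoupt--Wheeden would otherwise require.
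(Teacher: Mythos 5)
Your proposal is correct and follows essentially the same route as the paper: reduce to the sparse operator $L_\alpha^\Ss$, rewrite the averages in terms of $\sigma=w^{-p'}$, use the $A_{p,q}$ condition together with the $A_\infty$-type inequality~\eqref{eqn:frac-Ainfty} (applied to $\sigma$) and sparseness to trade $\sigma(Q)$ for $\sigma(E(Q))$, dominate by the weighted dyadic maximal operator, and conclude with Lemma~\ref{lemma:wtd-max}; the necessity via Theorem~\ref{thm:weak-max-one} is also the paper's argument. Your reading of $M_{\sigma,\alpha}^\D$ with the factor $\sigma(Q)^{\alpha/n}$ matches how the paper actually uses that operator in its proofs, so no adjustment is needed.
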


\begin{proof}
Since the strong type inequality implies the weak type inequality,
necessity follows from Theorem~\ref{thm:weak-max-one}.  To prove
sufficiency we can again assume $f$ is non-negative, bounded and has compact
support, and so it is enough to prove the strong type inequality for
$L_\alpha^\Ss f$, where $\Ss$ is any sparse subset of a dyadic grid
$\D$.

Let $\sigma=w^{-p'}$.  Since the sets $E(Q)$, $Q\in \Ss$ are disjoint, we have that
\begin{align*}
\|(L_\alpha^\Ss f)w\|_q^q
& = \sum_{Q\in \Ss} |Q|^{q\frac{\alpha}{n}}\avg{f}_Q^q w^q(E(Q)) \\
& \leq \sum_{Q\in \Ss} \big( \sigma(Q)^{\frac{\alpha}{n}}
\avg{f\sigma^{-1}}_{Q,\sigma}\big)^q
|Q|^{q\frac{\alpha}{n}-q}w^q(Q)\sigma(Q) ^{q-q\frac{\alpha}{n}} \\
& =  \sum_{Q\in \Ss} \big( \sigma(Q)^{\frac{\alpha}{n}}
\avg{f\sigma^{-1}}_{Q,\sigma}\big)^q
|Q|^{-\frac{q}{p'}-1}w^q(Q)\sigma(Q) ^{\frac{q}{p'} } \sigma(Q); \\
\intertext{by inequality~\eqref{eqn:frac-Ainfty}, the properties of
  sparse cubes, the definition of
  $A_{p,q}$ and Lemma~\ref{lemma:wtd-max},}
& \leq C([w]_{A_{p,q}}) \sum_{Q\in \Ss} \big( \sigma(Q)^{\frac{\alpha}{n}}
\avg{f\sigma^{-1}}_{Q,\sigma}\big)^q \sigma(E(Q)) \\
& \leq C([w]_{A_{p,q}}) \sum_{Q\in \Ss}
\int_{E(Q)} M_{\sigma,\alpha}^\D (f\sigma^{-1})(x)^q d\sigma \\
& \leq C([w]_{A_{p,q}}) \int_\subRn M_{\sigma,\alpha}^\D
(f\sigma^{-1})(x)^q d\sigma \\
& \leq C(p,q, [w]_{A_{p,q}}) \left(\int_\subRn f(x)^p\sigma(x)^{-p}
\sigma(x)\,dx\right)^{\frac{q}{p}} \\
& = C(p,q, [w]_{A_{p,q}}) \left(\int_\subRn f(x)^p w(x)^p\,dx\right)^{\frac{q}{p}}. 
\end{align*}
\end{proof}

The above proof has several features that we want to highlight.
First, since the sets $E(Q)$, $Q\in \Ss$ are pairwise disjoint, we are
able to pull the power $q$ inside the summation.  For dyadic
fractional integrals (even sparse ones) this is no longer the case.
As we will see below, the standard technique for avoiding this problem
is to use duality.    Second, a central obstacle is that we have a sum
over cubes $Q$ that are not themselves disjoint, so we need some way
of reducing the sum to the sum of integrals over disjoint sets.  Here
we use that the cubes in $\Ss$ are sparse, and then use the $A_\infty$
property given by inequality~\eqref{eqn:frac-Ainfty}.  In the two
weight setting we will no longer have this property.  To overcome this
we will pass to a carefully chosen subfamily of cubes that are
sparse with respect to some measure induced by the weights (e.g.,
$d\sigma$ in the proof above).  

\medskip

\subsection*{The fractional integral operator}
We now turn to one weight norm inequalities for the
fractional integral operator.  We will give a direct proof of the
strong type inequality that
appears to be new, though it draws upon ideas already in the
literature:  in particular, the two weight bump conditions for the
fractional integral due to P\'erez~\cite{MR1291534} (see
Theorem~\ref{thm:frac-joined} below).   The original proof of this result
by Muckenhoupt and Wheeden~\cite{muckenhoupt-wheeden74} used a
good-$\lambda$ inequality; another proof using sharp maximal
function estimates and extrapolation was given in~\cite{cruz-uribe-martell-perez04} (see
also~\cite[Chapter 9]{MR2797562}).   One important feature of these
approaches is that they also yield weak type inequalities for the
fractional integral.  It would be very interesting to give a proof of
the weak type inequalities using the techniques of this section as it
would shed light on several open problems:  see
Section~\ref{section:separated}.  

\begin{theorem} \label{thm:strong-frac-one}
Given $0<\alpha<n$, $1<p<\frac{n}{\alpha}$, $q$ such that
$\frac{1}{p}-\frac{1}{q}=\frac{\alpha}{n}$, and a weight $w$, the following are
equivalent:
\begin{enumerate}
\setlength{\itemsep}{6pt}
\item $w\in A_{p,q}$;

\item for any $f\in L^p(w^p)$,
\[ \left(\int_\subRn I_\alpha f(x)^q w(x)^q\,dx\right) ^{\frac{1}{q}}
\leq C(n,\alpha,p,[w]_{A_{p,q}})
\left(\int_\subRn |f(x)|^pw(x)^p\,dx\right)^{\frac{1}{p}}. \]
\end{enumerate}
\end{theorem}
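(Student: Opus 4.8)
The plan is to prove the two directions separately, mirroring the structure of the proofs for $M_\alpha$. Since the strong type inequality trivially implies the weak type inequality for $I_\alpha$, and since $M_\alpha f(x) \leq C(n,\alpha) I_\alpha f(x)$ pointwise for non-negative $f$ (indeed $|Q|^{\alpha/n}\avg{f}_Q \lesssim I_\alpha(f\chi_Q)(x) \leq I_\alpha f(x)$ for $x\in Q$), the weak type bound for $I_\alpha$ forces the weak type bound for $M_\alpha$, and then necessity of $A_{p,q}$ follows from Theorem~\ref{thm:weak-max-one}. So the whole content is the sufficiency direction.

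For sufficiency, the first reduction is standard: we may assume $f\geq 0$, bounded with compact support, and by Proposition~\ref{prop:dyadic-frac} and Proposition~\ref{prop:frac-sparse} it suffices to prove the strong $(p,q)$ bound (with $w^q\,dx$ and $w^p\,dx$) for the sparse operator $I_\alpha^\Ss$, where $\Ss$ is a sparse subset of a fixed dyadic grid $\D$. Set $\sigma = w^{-p'}$. Unlike the maximal operator case, the characteristic functions $\chi_Q$ overlap, so we cannot pull the exponent $q$ inside the sum directly; the standard remedy is duality. We write
\[
\|(I_\alpha^\Ss f)w\|_q = \sup \int_\subRn I_\alpha^\Ss f(x)\, g(x) w(x)\,dx,
\]
where the supremum is over non-negative $g$ with $\|g\|_{q'}=1$. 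Expanding the sparse operator and interchanging sum and integral,
\[
\int_\subRn I_\alpha^\Ss f \cdot gw\,dx = \sum_{Q\in\Ss} |Q|^{\frac{\alpha}{n}}\avg{f}_Q \int_Q gw\,dx
= \sum_{Q\in\Ss} |Q|^{\frac{\alpha}{n}} |Q|^2 \avg{f}_Q \avg{gw}_Q.
\]
Now write $\avg{f}_Q = \avg{f\sigma^{-1}}_{Q,\sigma}\cdot \frac{\sigma(Q)}{|Q|}$ and $\avg{gw}_Q = \avg{gw\cdot w^{-q}\cdot w^q}_Q$; more precisely, with $\nu = w^q\,dx$, write $\avg{gw}_Q = \avg{g w^{1-q}}_{Q,\nu}\cdot\frac{\nu(Q)}{|Q|}$. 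Collecting the local factors, the summand becomes
\[
\avg{f\sigma^{-1}}_{Q,\sigma}\,\avg{g w^{1-q}}_{Q,\nu}\cdot |Q|^{\frac{\alpha}{n}-2}\,\sigma(Q)\,\nu(Q)\cdot|Q|.
\]
The $A_{p,q}$ condition says exactly that $\nu(Q)^{1/q}\sigma(Q)^{1/p'} \leq [w]_{A_{p,q}}\,|Q|^{1-\frac{\alpha}{n}}$ (after clearing the $|Q|$'s in the averages); raising this to an appropriate power and bookkeeping the exponents — using $\frac1p - \frac1q = \frac\alpha n$ — shows that the geometric factor $|Q|^{\frac\alpha n - 1}\sigma(Q)\nu(Q)$ is bounded by $C([w]_{A_{p,q}})\,\sigma(Q)^{1/p}\nu(Q)^{1/q'}$, or some convex split of that form. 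One then uses sparseness to replace $\sigma(Q)$ by $2\sigma(E(Q))$ and $\nu(Q)$ by $2\nu(E(Q))$ as needed, so that the sum is dominated by
\[
C([w]_{A_{p,q}}) \sum_{Q\in\Ss} \avg{f\sigma^{-1}}_{Q,\sigma}\,\sigma(E(Q))^{1/p}\cdot \avg{g w^{1-q}}_{Q,\nu}\,\nu(E(Q))^{1/q'},
\]
and H\"older's inequality in $Q$ (with exponents $p$ and $q'$) together with the disjointness of the $E(Q)$ bounds this by
\[
C([w]_{A_{p,q}})\Big(\sum_Q \avg{f\sigma^{-1}}_{Q,\sigma}^p\,\sigma(E(Q))\Big)^{1/p}\Big(\sum_Q \avg{gw^{1-q}}_{Q,\nu}^{q'}\nu(E(Q))\Big)^{1/q'}.
\]
Since $\avg{f\sigma^{-1}}_{Q,\sigma} \leq M_\sigma^\D(f\sigma^{-1})(x)$ for $x\in E(Q)\subset Q$, the first factor is at most $\|M_\sigma^\D(f\sigma^{-1})\|_{L^p(\sigma)} \leq C(p)\|f\sigma^{-1}\|_{L^p(\sigma)} = C(p)\|fw\|_p$ by the (unweighted, $\alpha=0$) boundedness of $M_\sigma^\D$ on $L^p(\sigma)$ — Lemma~\ref{lemma:wtd-max} with $\alpha=0$. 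Likewise the second factor is at most $\|M_\nu^\D(gw^{1-q})\|_{L^{q'}(\nu)} \leq C(q')\|gw^{1-q}\|_{L^{q'}(\nu)} = C(q')\|g\|_{q'} = C(q')$. Taking the supremum over $g$ completes the proof.

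The main obstacle is the exponent bookkeeping in the middle step: one must verify that after distributing the single power of $|Q|$ and the measures $\sigma(Q),\nu(Q)$ between the two averages, the leftover geometric factor is \emph{exactly} controlled by $[w]_{A_{p,q}}$ to the right power, with the residual $\sigma$- and $\nu$-masses appearing to powers $1/p$ and $1/q'$ respectively so that sparseness and H\"older close the estimate. This is where the precise relation $q - q\frac\alpha n = 1 + \frac{q}{p'}$ (equivalently $\frac1p-\frac1q=\frac\alpha n$) gets used, just as in the proof of Theorem~\ref{thm:weak-max-one}; a slightly different but equivalent route is to split each summand as a product of a term raised to power governed by $p$ and one governed by $q'$ before applying H\"older, which is the ``bump-type'' organization alluded to in the remark preceding the theorem. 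Everything else — the reduction to sparse operators, the use of sparseness to pass to disjoint sets, and the appeal to Lemma~\ref{lemma:wtd-max} — is routine given the machinery already developed.
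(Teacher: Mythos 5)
Your overall architecture is the paper's: reduce by duality to the sparse operator $I_\alpha^\Ss$, rewrite each summand in terms of averages with respect to $\sigma=w^{-p'}$ and $\nu=w^q$, absorb the geometric factor $|Q|^{\frac{\alpha}{n}-1}\sigma(Q)\nu(Q)$ using the $A_{p,q}$ condition, pass to the disjoint sets $E(Q)$, and finish with weighted dyadic maximal operators and Lemma~\ref{lemma:wtd-max}; the necessity direction via the pointwise domination of $M_\alpha$ by $I_\alpha$ is also the paper's. Your bookkeeping differs only slightly: you split the geometric factor as $\sigma(Q)^{1/p}\nu(Q)^{1/q'}$ (which is indeed exactly equivalent to the $A_{p,q}$ condition) and then need only the $\alpha=0$ case of Lemma~\ref{lemma:wtd-max} for both factors, at the cost of an $\ell^{q'}\subset\ell^{p'}$ embedding --- note that your ``H\"older with exponents $p$ and $q'$'' requires this remark, since $\frac1p+\frac1{q'}=1+\frac{\alpha}{n}>1$, so you should apply H\"older with the conjugate pair $(p,p')$ and then use $q'\leq p'$. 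The paper instead splits as $\sigma(Q)^{1/p}\nu(Q)^{1/p'}$, keeps the leftover factor $\nu(Q)^{\alpha/n}$ attached to the dual average, and uses the off-diagonal $(q',p')$ bound for the weighted \emph{fractional} maximal operator. Both variants close the estimate.

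The one genuine flaw is the sentence ``one then uses sparseness to replace $\sigma(Q)$ by $2\sigma(E(Q))$ and $\nu(Q)$ by $2\nu(E(Q))$.'' Sparseness is a statement about Lebesgue measure ($|E(Q)|\geq\frac12|Q|$) and does not, for an arbitrary weight, give $\sigma(Q)\lesssim\sigma(E(Q))$ or $\nu(Q)\lesssim\nu(E(Q))$; this failure is precisely the obstruction the paper emphasizes when moving to two weights. In the one-weight setting the replacement is legitimate, but it must be justified by the $A_\infty$-type inequality \eqref{eqn:frac-Ainfty}, applied to $\nu=w^q$ and also to $\sigma=w^{-p'}$ (using that $w\in A_{p,q}$ iff $w^{-1}\in A_{q',p'}$): combining \eqref{eqn:frac-Ainfty} with $|E(Q)|\geq\frac12|Q|$ gives $\nu(E(Q))\geq c\,\nu(Q)$ and $\sigma(E(Q))\geq c\,\sigma(Q)$ with $c=c([w]_{A_{p,q}})$, not the absolute constant $2$ you wrote. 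With that correction your argument is complete and agrees in substance with the paper's proof.
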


\begin{proof}
By the pointwise inequality $M_\alpha^\D
f(x) \leq I_\alpha^\D f(x)$, the necessity of the $A_{p,q}$ condition
follows from Theorem~\ref{thm:strong-max-one}.

To prove sufficiency, we may assume $f$ is non-negative.
Furthermore, by the monotone convergence theorem, if $\{f_k\}$ is any sequence of functions that increases pointwise
a.e. to $f$,  then for each $x\in \R^n$, $I_\alpha f_k(x)$ increases
to $I_\alpha f(x)$. Therefore, we may also assume that $f$ is bounded and
has compact support.  Thus, 
it will suffice to prove this result for the sparse operator
$I_\alpha^\Ss$, where $\Ss$ is any sparse
subset of a dyadic grid~$\D$.

Let $v=w^q$ and $\sigma=w^{-p'}$ and estimate as follows:  there exists $g\in
L^{q'}(w^{-q'})$, $\|gw^{-1}\|_{q'}=1$, such that
\begin{align*}
\|(I_\alpha^\Ss f)w\|_q 
& = \int_\subRn I_\alpha f(x) g(x)\,dx \\
& = \sum_{Q\in \Ss} |Q|^{\frac{\alpha}{n}}\avg{f}_Q \int_Q g(x)\,dx \\
& = \sum_{Q\in \Ss}  |Q|^{\frac{\alpha}{n}-1}\sigma(Q)
v(Q)^{1-\frac{\alpha}{n}}
\avg{f\sigma^{-1}}_{Q,\sigma}
v(Q)^{\frac{\alpha}{n}}\avg{gv^{-1}}_{Q,v}. \\
\end{align*}
Since $1-\frac{\alpha}{n}=\frac{1}{p'}+\frac{1}{q}$, by the definition
of the $A_{p,q}$ condition and inequality~\eqref{eqn:frac-Ainfty}
(applied to both $v$ and $\sigma$), we have that
\[ |Q|^{\frac{\alpha}{n}-1}\sigma(Q)
v(Q)^{1-\frac{\alpha}{n}}  \leq [w]_{A_{p,q}} \sigma(Q)^{\frac{1}{p}}
v(Q)^{\frac{1}{p'}} 
\leq  C([w]_{A_{p,q}}) \sigma(E(Q))^{\frac{1}{p}}
v(E(Q))^{\frac{1}{p'}}.
\]
If we combine these two estimates,  by H\"older's
inequality and Lemma~\ref{lemma:wtd-max} we get that 
\begin{align*}
&\|(I_\alpha^\Ss f)w\|_q \\
& \qquad \leq C([w]_{A_{p,q}})  \sum_{Q\in\Ss}
\avg{f\sigma^{-1}}_{Q,\sigma}\sigma(E(Q))^{\frac{1}{p}}
v(Q)^{\frac{\alpha}{n}}\avg{gv^{-1}}_{Q,v}v(E(Q))^{\frac{1}{p'}} \\
& \qquad \leq C([w]_{A_{p,q}}) \left(\sum_{Q\in \Ss} 
\avg{f\sigma^{-1}}_{Q,\sigma}^p \sigma(E(Q))\right)^{\frac{1}{p}}
\left(\sum_{Q\in \Ss} [v(Q)^{\frac{\alpha}{n}}\avg{gv^{-1}}_{Q,v}]^{p'}
  v(E(Q))\right)^{\frac{1}{p'}} \\
& \qquad \leq C([w]_{A_{p,q}}) \left(\sum_{Q\in \Ss} 
\int_{E(Q)} M_\sigma^\D (f\sigma^{-1})(x)^p \,d\sigma\right)^{\frac{1}{p}}
\left(\sum_{Q\in \Ss} 
\int_{E(Q)} M_{\sigma,\alpha}^\D(gv^{-1})(x)^{p'}\,dv
\right)^{\frac{1}{p'}} \\
& \qquad \leq C([w]_{A_{p,q}})  \left(
\int_\subRn M_\sigma^\D (f\sigma^{-1})(x)^p \,d\sigma\right)^{\frac{1}{p}}
\left(
\int_\subRn M_{\sigma,\alpha}^\D(gv^{-1})(x)^{p'}\,dv
\right)^{\frac{1}{p'}} \\
& \qquad \leq C(p,q,[w]_{A_{p,q}})  \left(
\int_\subRn (f (x)\sigma (x)^{-1})^p \,d\sigma\right)^{\frac{1}{p}}
\left(
\int_\subRn (g (x) v (x)^{-1})^{q'}\,dv
\right)^{\frac{1}{q'}} \\
& \qquad = C(p,q,[w]_{A_{p,q}}) \|fw\|_p \|gw^{-1}\|_{q'} \\
&  \qquad = C(p,q,[w]_{A_{p,q}}) \|fw\|_p.
\end{align*}
\end{proof}

\subsection*{Commutators}
We conclude this section with the statement of the one weight norm
inequality for the commutator $[b,I_\alpha]$.  This was proved
in~\cite{cruz-moen2012} using a Cauchy integral formula technique due
to Chung, Pereyra and P\'erez~\cite{MR2869172}.  We refer the reader
there for the details of the proof.

\begin{theorem} \label{thm:strong-frac-one}
Given $0<\alpha<n$, $1<p<\frac{n}{\alpha}$, $q$ such that
$\frac{1}{p}-\frac{1}{q}=\frac{\alpha}{n}$, $b\in BMO$ and a weight
$w$, then for any $f\in L^p(w^p)$,
\[ \left(\int_\subRn [b,I_\alpha] f(x)^q w(x)^q\,dx\right) ^{\frac{1}{q}}
\leq C(n,\alpha,p,[w]_{A_{p,q}},\|b\|_{BMO})
\left(\int_\subRn |f(x)|^pw(x)^p\,dx\right)^{\frac{1}{p}}. \]
\end{theorem}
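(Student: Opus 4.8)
The result is attributed to a conjugation argument built on the Cauchy integral formula, and that is the route I would take: it reduces the commutator bound to the one weight strong type $(p,q)$ inequality for $I_\alpha$ with an $A_{p,q}$ weight, proved earlier in this section, applied to a one-parameter family of weights obtained by perturbing $w$ by $e^{sb}$. By density and a routine limiting argument (Fatou's lemma) it suffices to prove the inequality for $f\in L^p(w^p)$ that are bounded with compact support; we may also assume $\|b\|_{BMO}>0$, since otherwise $b$ is constant and $[b,I_\alpha]=0$. For such $f$ and for $z$ in a small complex disc $\{|z|\le\epsilon\}$ the function $e^{-zb}f$ lies in every $L^s_{loc}$ and has integrable tails against the kernel (by the John--Nirenberg inequality and the exponential integrability of $BMO$ functions), so $z\mapsto T_zf(x):=e^{zb(x)}I_\alpha(e^{-zb}f)(x)$ is well defined and analytic on that disc for each $x$, with $T_0f=I_\alpha f$ and $\frac{d}{dz}T_zf\big|_{z=0}=bI_\alpha f-I_\alpha(bf)=[b,I_\alpha]f$. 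Hence by the Cauchy integral formula, for any $0<r\le\epsilon$,
\[ [b,I_\alpha]f(x)=\frac{1}{2\pi i}\oint_{|z|=r}\frac{T_zf(x)}{z^2}\,dz, \qquad\text{so}\qquad \big\|[b,I_\alpha]f\big\|_{L^q(w^q)}\le\frac{1}{2\pi r^2}\oint_{|z|=r}\big\|T_zf\big\|_{L^q(w^q)}\,|dz| \]
by Minkowski's integral inequality. Making the interchange of the integral defining $I_\alpha$ with the contour integral (and the differentiation in $z$) rigorous is the first technical point, and is where the reductions on $f$ are used.

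Next I would bound $\|T_zf\|_{L^q(w^q)}$ uniformly in $z$. Since $|e^{zb(x)}|=e^{\operatorname{Re}(z)b(x)}$, putting $W_z:=w\,e^{\operatorname{Re}(z)b}$ we have $|e^{-zb}f|\,W_z=|f|\,w$ pointwise, whence
\[ \big\|T_zf\big\|_{L^q(w^q)}=\big\|I_\alpha(e^{-zb}f)\big\|_{L^q(W_z^q)}\le C\big([W_z]_{A_{p,q}}\big)\,\big\|(e^{-zb}f)W_z\big\|_{L^p}=C\big([W_z]_{A_{p,q}}\big)\,\|fw\|_{L^p}, \]
the middle inequality being the one weight strong type bound for $I_\alpha$ (valid for complex-valued inputs because $|I_\alpha g|\le I_\alpha|g|$) applied to the weight $W_z$. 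So the whole argument hinges on controlling $[W_z]_{A_{p,q}}$ for $|z|\le\epsilon$, i.e. on a stability property of the $A_{p,q}$ class under multiplication by $e^{sb}$, $s=\operatorname{Re}(z)\in[-\epsilon,\epsilon]$.

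This stability estimate is the heart of the matter, and I expect it to be the main obstacle. The claim is that there is $\epsilon=\epsilon(n,p,q,[w]_{A_{p,q}},\|b\|_{BMO})>0$ with $\sup_{|s|\le\epsilon}[w\,e^{sb}]_{A_{p,q}}\le C(n,p,q,[w]_{A_{p,q}},\|b\|_{BMO})$. To prove it, fix a cube $Q$ and look at
\[ \Big(\avgint_Q(we^{sb})^q\,dx\Big)^{1/q}\Big(\avgint_Q(we^{sb})^{-p'}\,dx\Big)^{1/p'}; \]
in each average one may replace $b$ by $b-\avg{b}_Q$, since the constant factors $e^{\pm sq\avg{b}_Q}$ and $e^{\mp sp'\avg{b}_Q}$ cancel after the product is formed. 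Applying H\"older's inequality with a small exponent $1+\delta$ splits each average into a power of $\avgint_Q w^{q(1+\delta)}\,dx$ (resp. $\avgint_Q w^{-p'(1+\delta)}\,dx$) times a power of $\avgint_Q e^{cs(b-\avg{b}_Q)}\,dx$. The first factors are controlled by $\avgint_Q w^q\,dx$ (resp. $\avgint_Q w^{-p'}\,dx$) via the reverse H\"older inequality, since $w^q\in A_{1+q/p'}$ and $w^{-p'}\in A_{1+p/q'}$ are $A_\infty$ weights with reverse H\"older data depending only on $[w]_{A_{p,q}}$; this fixes $\delta$. The exponential factors are bounded by an absolute constant once $|cs|\,\|b\|_{BMO}$ lies in the John--Nirenberg range, which fixes $\epsilon$. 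Combining these gives $[we^{sb}]_{A_{p,q}}\le C\,[w]_{A_{p,q}}$ for $|s|\le\epsilon$.

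Finally, taking $r=\epsilon$ and chaining the displays,
\[ \big\|[b,I_\alpha]f\big\|_{L^q(w^q)}\le\frac{1}{2\pi\epsilon^2}\cdot2\pi\epsilon\cdot\sup_{|z|=\epsilon}C\big([W_z]_{A_{p,q}}\big)\,\|fw\|_{L^p}\le C\big(n,\alpha,p,[w]_{A_{p,q}},\|b\|_{BMO}\big)\,\|fw\|_{L^p}, \]
and removing the reductions on $f$ by density and Fatou's lemma completes the proof. The two delicate points are, in order of difficulty, the stability of $A_{p,q}$ under exponential $BMO$ perturbations (which is precisely what forces $\epsilon$, and hence the final constant, to depend on $[w]_{A_{p,q}}$ and $\|b\|_{BMO}$) and the analyticity/differentiation-under-the-integral justification in the first step. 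An alternative route that bypasses the Cauchy integral formula would instead dominate the dyadic operator $C_b^{\D^t}$ of Proposition~\ref{prop:dyadic-commutator} by a suitable sum over a sparse family, as in~\cite[Theorem~1.6]{cruz-moen2012}, but the conjugation argument is the shortest path here.
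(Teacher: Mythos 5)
Your proposal is correct and follows essentially the same route the paper points to: it does not reproduce a proof but cites the Cauchy integral formula (conjugation) technique of Chung, Pereyra and P\'erez as implemented in \cite{cruz-moen2012}, which is exactly your argument of writing $[b,I_\alpha]f$ as a contour integral of $e^{zb}I_\alpha(e^{-zb}f)$, applying Minkowski's inequality and the one weight $A_{p,q}$ bound for $I_\alpha$ to the perturbed weights $we^{\operatorname{Re}(z)b}$, and controlling $[we^{sb}]_{A_{p,q}}$ for small $|s|$ via reverse H\"older and John--Nirenberg. The two technical points you single out (the stability of the $A_{p,q}$ constant under exponential $BMO$ perturbations and the analyticity justification) are precisely where the cited proof spends its effort, so your outline is faithful to it.
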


\section{Testing conditions}
\label{section:testing}

In this section we turn to our main topic: two
weight norm inequalities for fractional maximal and integral
operators and for commutators.   We will consider one of the two dominant approaches to
this problem:  the Sawyer testing conditions.

\subsection*{Two weight inequalities}
Before discussing characterizations of two weight inequalities, we
first  reformulate them in a way that works well
with arbitrary weights.    We are interested in weak and strong type
inequalities of the form
\begin{gather*}
\sup_{t>0} t \, u(\{ x\in \R^n : |Tf(x)|>t\}) ^{\frac{1}{q}}
\leq C \left(\int_\subRn |f(x)|^p u(x)\,dx \right)^{\frac{1}{p}}\\
\left(\int_\subRn |Tf(x)|^q v(x)\,dx \right)^{\frac{1}{q}}
\leq C \left(\int_\subRn |f(x)|^p v(x)\,dx \right)^{\frac{1}{p}}
\end{gather*}
where $1<p\leq q< \infty$ and $T$ is one of $M_\alpha$, $I_\alpha$,
or $[b,I_\alpha]$.   For the weak type inequality we can also
consider the (more difficult) endpoint inequality when $p=1$.  For two
weight inequalities we no longer assume that there is a
relationship among $p$, $q$ and $\alpha$.  This allows us to consider
``diagonal'' inequalities: e.g.,  $I_\alpha : L^p(v)\rightarrow
L^p(u)$.    For
this reason it is more convenient to write the weights as measures
(e.g., "$u\,dx$") rather than as ``multipliers'' as we did in the
previous section for one weight norm inequalities.   

However, there are some problems with this formulation.  For instance,
since $I_\alpha$ is self-adjoint,  a strong type inequality also
implies a dual inequality.  For instance, at least formally, the dual inequality to 
\[ I_\alpha : L^p(v) \rightarrow L^{q}(u) \]
is
\[ I_\alpha : L^{q'}(u^{1-q'}) \rightarrow L^{p'}(v^{1-p'}). \]
To make sense of this we need to assume either that $0<v(x)<\infty$
a.e. (which precludes weights that have compact support) or deal with
weights that are measurable functions but equal infinity on sets of
positive measure.   This is possible, but it requires some care to
consistently evaluate expressions of the form $0\cdot \infty$.   For a
careful discussion of the details in one particular setting,
see~\cite[Section~7.2]{MR2797562}. 

To avoid these problems we adopt a point of view first introduced by
Sawyer~\cite{MR719674,MR809769}.  We introduce a new weight
$\sigma=v^{1-p'}$ and replace $f$ by $f\sigma$; then we can restate
the weak and strong type inequalities as
\begin{gather*}
\sup_{t>0} t \, u(\{ x\in \R^n : |T(f\sigma)(x)|>t\}) ^{\frac{1}{q}}
\leq C \left(\int_\subRn |f(x)|^p \sigma(x)\,dx \right)^{\frac{1}{p}},\\
\left(\int_\subRn |T(f\sigma)(x)|^q u(x)\,dx \right)^{\frac{1}{q}}
\leq C \left(\int_\subRn |f(x)|^p \sigma(x)\,dx \right)^{\frac{1}{p}}.
\end{gather*}
With this formulation, the dual inequality becomes much more natural:
for example, for $I_\alpha$, the dual of 
\begin{equation} \label{eqn:pq}
 I_\alpha (\cdot \sigma) : L^p(\sigma) \rightarrow L^q(u) 
\end{equation}
is given by
\begin{equation} \label{eqn:pq-dual}
 I_\alpha (\cdot u) : L^{q'}(u) \rightarrow L^{p'}(\sigma). 
\end{equation}
Hereafter, in a slight abuse of terminology, we will refer to
inequalities like~\eqref{eqn:pq-dual} as the dual of~\eqref{eqn:pq}
even if the operator involved (e.g., $M_\alpha$) is not self-adjoint
or even linear. 

Another advantage of this formulation (though not one we will consider
here) is that in this form one can take $u$ and $\sigma$ to be
non-negative measures.   See for instance, Sawyer~\cite{MR719674}, or more
recently, Lacey~\cite{2013arXiv1304.5004L}.

Finally, we note in
passing that two weight inequalities when $q<p$ are much more difficult and we
will not discuss them.  For more information on such inequalities for
$I_\alpha$, we refer the reader to
Verbitsky~\cite{MR1134691} and the recent paper by
Tanaka~\cite{2013arXiv1302.4164T}.   We are not aware of any analogous
results for $M_\alpha$ or $[b,I_\alpha]$.

\subsection*{Testing conditions for fractional maximal operators}
Our first approach to characterizing the pairs of weights $(u,\sigma)$ for which
a two weight inequality hold is via testing
conditions.  The basic idea of a testing condition is to show that an
operator $T$ satisfies the strong $(p,q)$ inequality
$T(\cdot\sigma) : L^p(\sigma ) \rightarrow L^q(u)$
if and only if $T$ satisfies it when restricted to a family of test
functions:  for instance, the characteristic functions of cubes,
$\chi_Q$.    This approach to the problem is due to Sawyer, who first
proved testing conditions for maximal operators~\cite{sawyer82b}, the
Hardy operator~\cite{MR809769}, and fractional
integrals~\cite{MR719674,MR930072}.  For this reason, these are often
referred to as Sawyer testing conditions.

Testing conditions received renewed interest in the work of Nazarov,
Treil and
Volberg~\cite{nazarov-treil-volberg99,nazarov-treil-volberg08,MR2019058}; they
first made explicit the conjecture that testing conditions were
necessary and sufficient for singular integral operators, beginning
with the Hilbert transform.  (Even this case is an extremely difficult problem
which was only recently solved by Lacey, Sawyer, Shen and
Uriarte-Tuero~\cite{2013arXiv1301.4663L,2012arXiv1201.4319L}.)   They
also pointed out (see~\cite{MR2019058}) the close connection between testing conditions and
the David-Journ\'e $T1$ theorem that characterizes the boundedness of singular
integrals on $L^2$.  This was not immediately obvious in the original
formulation of the $T1$ theorem, but became clear in the version given by
Stein~\cite{stein93}.  

\medskip

We first consider the testing condition that characterizes the strong
$(p,q)$ inequality for the fractional maximal operator.  As we
noted, this was first proved by Sawyer~\cite{sawyer82b}.  Here we give
a new proof based on ideas of Hyt\"onen~\cite{hytonenP} and Lacey, {\em
  et al.}~\cite{2012arXiv1201.4319L}.   For a related proof that
avoids duality and is closer in spirit to the proof of Theorem~\ref{thm:strong-max-one}, see
Kairema~\cite{MR3058926}. 

\begin{theorem} \label{thm:max-testing}
Given $0\leq \alpha <n$, $1< p \leq q< \infty$, and a pair of
weights $(u,\sigma)$, the following are equivalent:
\begin{enumerate}
\item $(u,\sigma)$ satisfy the testing condition
\[ \M_\alpha = \sup_Q \sigma(Q)^{-1/p}\left(\int_Q M_\alpha (\chi_Q\sigma)(x)^q
u(x)\,dx\right)^{\frac{1}{q}} < \infty; \]

\item for every $f\in L^p(\sigma)$,
\[ \left(\int_\subRn M_\alpha(f\sigma)(x)^q u(x)\,dx\right)^{\frac{1}{q}}
\leq C (n,p,\alpha)\M_\alpha\left(\int_\subRn |f(x)|^p \sigma(x)\,dx\right)^{\frac{1}{p}}.  \]
\end{enumerate}
\end{theorem}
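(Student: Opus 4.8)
The plan is to prove the non-trivial implication, that the testing condition $\M_\alpha < \infty$ implies the strong $(p,q)$ bound; the converse is immediate by taking $f = \chi_Q$. As explained in the dyadic machinery of Section~\ref{section:dyadic}, it suffices to work with a single dyadic grid $\D$ and prove the inequality for $M_\alpha^\D$, since the testing constant for $M_\alpha^{\D}$ is dominated by $\M_\alpha$ (the dyadic maximal function is pointwise smaller than $M_\alpha$ on the relevant cubes). So fix a dyadic grid $\D$ and a non-negative $f \in L^p(\sigma)$, which we may take to be bounded with compact support.

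First I would linearize $M_\alpha^\D$ via a sparse family, in the spirit of Proposition~\ref{prop:max-sparse}. Run a Calder\'on--Zygmund-type stopping-time argument adapted to the measure $\sigma$: with $a = 2^{n+1}$, let $\Ss_k$ be the maximal dyadic cubes $Q$ with $|Q|^{\alpha/n}\avg{f}_{Q,\sigma} > a^k$, set $\Ss = \bigcup_k \Ss_k$, and note that this family is sparse \emph{with respect to $\sigma$}, i.e. the sets $E_\sigma(Q) = Q \setminus \bigcup_{Q' \in \Ss_{k+1}, Q' \subsetneq Q} Q'$ satisfy $\sigma(E_\sigma(Q)) \geq c\,\sigma(Q)$ — this is the two-weight substitute for inequality~\eqref{eqn:frac-Ainfty}, which is no longer available. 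On each $E_\sigma(Q)$ we have $M_\alpha^\D(f\sigma)(x) \leq a\,|Q|^{\alpha/n}\avg{f\sigma}_Q / |Q| \cdot |Q| = a\,|Q|^{\alpha/n - 1}\int_Q f\,d\sigma$ (up to the harmless factor $a$), so that pointwise $M_\alpha^\D(f\sigma) \leq C \sum_{Q \in \Ss} |Q|^{\alpha/n}\avg{f}_{Q,\sigma}\,\chi_Q$, where $\avg{f}_{Q,\sigma} = \frac{1}{|Q|}\int_Q f\,d\sigma$ (note: average in $|Q|$, not $\sigma(Q)$). Thus it remains to bound $\big\|\sum_{Q \in \Ss}|Q|^{\alpha/n}\avg{f}_{Q,\sigma}\chi_Q\big\|_{L^q(u)}$.

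Next I would estimate this sparse sum by duality, which is the standard device for handling the power $q$ on a sum over non-disjoint cubes, exactly as in the proof of Theorem~\ref{thm:strong-frac-one}. Pick $g \in L^{q'}(u)$ with $\|g\|_{L^{q'}(u)} = 1$ and expand
\begin{align*}
\Big\langle \sum_{Q \in \Ss} |Q|^{\alpha/n}\avg{f}_{Q,\sigma}\chi_Q,\; g\Big\rangle_u
 &= \sum_{Q \in \Ss} |Q|^{\alpha/n}\avg{f}_{Q,\sigma}\int_Q g\,u\,dx \\
 &= \sum_{Q \in \Ss} \big(|Q|^{\alpha/n - 1}\textstyle\int_Q f\,d\sigma\big)\big(\int_Q g\,u\,dx\big).
\end{align*}
Using that $\Ss$ is $\sigma$-sparse I replace $\sigma(Q)$ by $\sigma(E_\sigma(Q))$, apply H\"older's inequality in the form $\big(\sum_Q A_Q B_Q\big) \leq \big(\sum_Q A_Q^p\big)^{1/p}\big(\sum_Q B_Q^{p'}\big)^{1/p'}$ splitting the summand appropriately, and recognize the two resulting factors: one is controlled by $\|M_\sigma^\D(f)\|_{L^p(\sigma)} \lesssim \|f\|_{L^p(\sigma)}$ via Lemma~\ref{lemma:wtd-max}, and the \emph{other} factor is precisely where the testing hypothesis $\M_\alpha$ enters — the quantity $\sum_{Q}\big(|Q|^{\alpha/n - 1}\int_Q g\,u\big)^{p'} \sigma(E_\sigma(Q))$ is bounded by $\M_\alpha^{p'}$ times an $L^{q'}(u)$-norm of a weighted maximal function of $g$, using that $\int_Q M_\alpha(\chi_Q\sigma)^q u \leq \M_\alpha^q\,\sigma(Q)^{q/p}$ test estimate rearranged.

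The main obstacle — and the place requiring the most care — is the bookkeeping in this last H\"older split: one must partition the exponent $|Q|^{\alpha/n-1}$ and distribute powers of $\sigma(E_\sigma(Q))$, $u(Q)$, and $|Q|$ across the two factors so that \emph{both} emerge as genuine norms (a maximal operator bound on the $f$-side, the testing constant on the $g$-side) with no leftover geometric factor. This is exactly the step where, in the one-weight case, one used inequality~\eqref{eqn:frac-Ainfty} to convert $|Q|^{\alpha/n-1}\sigma(Q)v(Q)^{1-\alpha/n}$ into a clean product of $E(Q)$-measures; here that tool is gone and the testing condition must do its job directly, which means the test inequality should ideally be invoked in a \emph{stopped} form over the sparse family (summing the test estimates over the maximal cubes of a further stopping time on $g$). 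Assembling these pieces — the $\sigma$-sparse stopping construction, the two maximal-function bounds from Lemma~\ref{lemma:wtd-max}, and the stopped testing estimate — yields the desired inequality with constant $C(n,p,\alpha)\M_\alpha$.
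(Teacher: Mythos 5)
Your skeleton (reduce to a dyadic model, dualize, run a stopping time that restores sparseness with respect to a weight-induced measure, finish with Lemma~\ref{lemma:wtd-max} and a stopped application of the testing condition) is the right shape, but the two steps you lean on are precisely where the gaps lie. First, the family you build by stopping on $|Q|^{\frac{\alpha}{n}}|Q|^{-1}\int_Q f\,d\sigma > a^k$ is just the usual linearization of $M_\alpha^\D(f\sigma)$, and it is sparse with respect to \emph{Lebesgue} measure (this is Proposition~\ref{prop:max-sparse}); it is not $\sigma$-sparse. The computation behind \eqref{eqn:sparse-proof} compares Lebesgue measures of the children against the Lebesgue-normalized stopping data and gives no control whatsoever on $\sigma$ of the children: take $f\equiv 1$ and $\sigma$ concentrated on a tiny subcube to see that the stopping children can carry essentially all of $\sigma(Q)$, so $\sigma(E(Q))\geq c\,\sigma(Q)$ fails. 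The correct two-weight substitute for \eqref{eqn:frac-Ainfty} is not this family at all, but a \emph{second}, independent stopping time layered on top of it: the corona decomposition of $f$ with respect to $\sigma$, where one stops when the genuine $\sigma$-average $\avg{f}_{Q,\sigma}=\sigma(Q)^{-1}\int_Q f\,d\sigma$ more than doubles. That family $\F$ is $\sigma$-sparse by a one-line computation, and it must be used \emph{together with} the Lebesgue-sparse family $\Ss$, through the parent map $\pi_\F$. Note also that your Lebesgue-normalized quantity $|Q|^{-1}\int_Q f\,d\sigma$ is not dominated by $M_\sigma^\D f$ on $Q$ (it differs by the factor $\avg{\sigma}_Q$), so even the $f$-side of your H\"older split does not come out as $\|M_\sigma^\D f\|_{L^p(\sigma)}$ without further rearrangement.

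Second, the step you defer as ``bookkeeping'' is in fact the missing idea, and the route you sketch does not close: the testing hypothesis bounds $\|M_\alpha(\chi_Q\sigma)\|_{L^q(u)}$ by $\M_\alpha\,\sigma(Q)^{1/p}$, and no rearrangement of that inequality controls dual sums of the form $\sum_Q \bigl(|Q|^{\frac{\alpha}{n}-1}\int_Q g\,u\,dx\bigr)^{p'}\sigma(E(Q))$. What makes the testing condition usable is the corona grouping. For each $F\in\F$ one sums over the cubes $Q\in\Ss$ with $\pi_\F(Q)=F$, pulls out $\avg{f}_{F,\sigma}$ by the stopping rule (at the price of a factor $2$), and recognizes the remaining sum as $\int_F L_\alpha^\Ss(\sigma\chi_F)\,g_F\,u\,dx$, to which H\"older and the testing condition apply, yielding $\M_\alpha\,\avg{f}_{F,\sigma}\,\sigma(F)^{1/p}\|g_F\|_{L^{q'}(u)}$. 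Here $g_F=g\chi_{E_\F(F)}$, and the reason one may localize $g$ to $E_\F(F)$ is a geometric fact your argument never establishes: if $F'$ is a corona child of $F=\pi_\F(Q)$ and $F'\cap Q\neq\emptyset$, then $F'\subsetneq Q$, hence $F'\cap E(Q)=\emptyset$ because $\Ss$ is sparse, so $\int_{E(Q)}g\,u\,dx=\int_{E(Q)\cap E_\F(F)}g\,u\,dx$. After this, H\"older over $F$ with exponents $p,p'$ finishes: the $f$-side sum $\sum_F \avg{f}_{F,\sigma}^p\sigma(F)$ is handled by the $\sigma$-sparseness of $\F$ and Lemma~\ref{lemma:wtd-max}, while the $g$-side sum collapses to $\|g\|_{L^{q'}(u)}^{q'}\leq 1$ using $q'\leq p'$ and the pairwise disjointness of the sets $E_\F(F)$. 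In particular, no stopping time ``on $g$'' is involved in this theorem; the single corona on $f$ against $\sigma$, interacting with the sparse family through $\pi_\F$ and the sets $E(Q)$, is what replaces the one-weight $A_\infty$ estimate. Without these ingredients your proposal does not yet constitute a proof.
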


To overcome the fact that the weights $u$ and $\sigma$ need not
satisfy the $A_\infty$ condition (which was central to the proof in
the one weight case) we introduce a stopping time argument referred to as the
corona decomposition.   This technique was one of the tools introduced
into the study of the $A_2$ conjecture by Lacey, Petermichl and
Reguera~\cite{MR2657437}.  The terminology goes back to David and
Semmes~\cite{MR1113517,MR1251061}, but the construction itself seems
to have first been used by Muckenhoupt and
Wheeden~\cite{MR0447956} in one dimension, where they constructed
``principal intervals.''  (See also~\cite{cruz-uribe-martell-perez05,sawyer85}.)

Before proving Theorem~\ref{thm:max-testing} we first describe the corona construction in more general terms.
Given a fixed dyadic cube $Q_0$ in a dyadic grid $\D$, a family of
dyadic cubes $\T\subset\D$ all
contained in $Q_0$, a non-negative,
locally integrable function $f$, and a
 weight $\sigma$,  we define a subfamily $\F \subset \T$
inductively.  Let $\F_0=\{Q_0\}$.   For $k\geq 0$, given the
collection of cubes $\F_k$, and $F\in \F_k$  let $\eta_\F(F)$ be the
collection of maximal disjoint subcubes $Q$ of $F$ such that
$\avg{f}_{Q,\sigma} > 2 \avg{f}_{F,\sigma}$.  (This collection could
be empty; if it is the construction stops.)
Then set
\[ \F_{k+1} = \bigcup_{F\in \F_k} \eta_\F(F) \]
and define
\[ \F = \bigcup_k \F_k.  \]
We will refer to $\F$ as the corona cubes of $f$ with respect to
$\sigma$. 

Given any cube $Q \in \T$, then by construction it is contained in
some cube in $\F$.  Let $\pi_\F(Q)$ be the smallest cube in $\F$ such
that $Q\subset \pi_\F(Q)$.  We will refer to the cubes $\eta_\F(F)$ as
the {\em children} of $F$ in $\F$, and $\pi_\F(Q)$ as the {\em parent} of $Q$
in $\F$.(\footnote{In the literature, the notation $ch_\F(F)$ is often
  used for the children of $F$.  We wanted to use Greek letters to
  denote both sets.  The letter $\eta$ seemed appropriate since it is
  the Greek ``h'', and  in Spanish the cubes in these collections are called {\em hijos} and
{\em padres.}})

The cubes in $\F$ have the critical property that they are sparse with
respect to the measure $d\sigma$.  Given any $F\in \F$, if we compute the
measure of the children of $F$ we see that
\[ \sum_{F'\in \eta_\F(F)} \sigma(F')  
\leq \frac{1}{2}\sum_{F'\in \eta_\F(F)}
\frac{(f\sigma)(F')}{\avg{f}_{\sigma,F}}
\leq \frac{1}{2}\frac{(f\sigma)(F)}{\avg{f}_{\sigma,F}}
\leq \frac{1}{2}\sigma(F). \]
Therefore, if we define the set
\[ E_\F(F) =  F \setminus \bigcup_{F' \in \eta_\F(F)} F', \]
then
\[ \sigma\big(E_\F(F)\big) \geq \frac{1}{2}\sigma(F). \]
We will refer to this as the $A_\infty$ property of the cubes in $\F$.

Below we will perform this construction not just on a single
cube $Q_0$ but on each cube in a fixed set of disjoint cubes.  We
will again refer to the collection of all the cubes that result from this
construction applied to each cube in this set as $\F$.

\begin{proof}[Proof of Theorem~\textup{\ref{thm:max-testing}}]
The necessity of the testing condition is immediate if we take
$f=\chi_Q$.   

To prove the sufficiency of the testing condition, first note that
arguing as we did in the proof of Theorem~\ref{thm:weak-max-one} we
may assume that $f$ is non-negative, bounded and has compact support.
Therefore, it will suffice to show that given any dyadic grid $\D$ and sparse set
$\Ss\subset \D$, the strong type inequality holds for $L_\alpha^\Ss$
assuming the testing condition holds for $L_\alpha^\Ss$.   Here we
use the fact that given $f$ there exists a sparse subset $\Ss$ such
that $M_\alpha^\D f(x) \lesssim L_\alpha^\Ss f(x)$, and that for every
such sparse set,  $L_\alpha^\Ss(\chi_Q\sigma)(x) \leq
M_\alpha^\D(\chi_Q \sigma)(x)$.

Fix $\D$, 
$\Ss$ and $f$.   Then there exists a function
$g\in L^{q'}(u)$, $\|g\|_{L^{q'}(u)}=1$, such that
\[ \|L_\alpha^{\Ss}(f\sigma)\|_{L^{q'}(u)} 
= \int_\subRn L_\alpha^{\Ss}(f\sigma)(x) g(x) u(x)\,dx 
= \sum_{Q\in \Ss} |Q|^{\frac{\alpha}{n}}\avg{f\sigma}_{Q}\int_{E(Q)}
  g(x)u(x)\,dx. \]

To estimate the right-hand side, fix $N\geq 0$ and let $\Ss_N$ be the
maximal disjoint cubes $Q$ in $\Ss$ such that $\ell(Q)\leq 2^N$.
Then by the monotone convergence theorem it will suffice to prove that 
\[ \sum_{Q\in \Ss_N} |Q|^{\frac{\alpha}{n}}\avg{f\sigma}_{Q}\int_{E(Q)}
  g(x)u(x)\,dx \leq C(n,p,\alpha)\M_\alpha  \|f\|_{L^p(\sigma)}. \]
For each cube $Q\in \Ss_N$, form the corona decomposition of $f$ with
respect to $\sigma$.  Then we can rewrite the sum above
as
\[  \sum_{Q\in \Ss_N} |Q|^{\frac{\alpha}{n}}\avg{f\sigma}_{Q}\int_{E(Q)}
  g(x)u(x)\,dx 
= \sum_{F\in \F} \sum_{\substack{Q\in \Ss_N\\ \pi_\F(Q)=F}} |Q|^{\frac{\alpha}{n}}\avg{f\sigma}_{Q}\int_{E(Q)}
  g(x)u(x)\,dx. \] 

Fix a cube $F$ and $Q$ such that $\pi_\F(Q)=F$.  Then given any $F'\in
\eta_\F(F)$, we must have that $F'\cap Q=\emptyset$ or $F'\subsetneq
Q$.  If the latter, then, since $\Ss$ is sparse, we must have that
$F'\cap E(Q)=\emptyset$.   Therefore,
\begin{multline*}
 \int_{E(Q)} g(x)u(x)\,dx 
\\ = \int_{E(Q)\cap E_\F(F)} g(x)u(x)\,dx + 
\sum_{F' \in \eta_\F(F)}  \int_{E(Q)\cap F'} g(x)u(x)\,dx
 = \int_{E(Q)\cap E_\F(F)} g(x)u(x)\,dx.
\end{multline*}

Let $g_F(x) = g(x)\chi_{E(F)}$ and argue as follows:  by the definition of the corona cubes, the
testing condition, and H\"older's inequality,
\begin{align*}
 & \sum_{F\in \F} \sum_{\substack{Q\in \Ss_N\\ \pi_\F(Q)=F}} 
|Q|^{\frac{\alpha}{n}}\avg{f\sigma}_{Q}\int_{E(Q)} g(x)u(x)\,dx \\
& \qquad \qquad = \sum_{F\in \F} \sum_{\substack{Q\in \Ss_N\\ \pi_\F(Q)=F}} 
|Q|^{\frac{\alpha}{n}}\avg{f}_{Q,\sigma} \avg{\sigma}_Q \int_{E(Q)}
g_F(x)u(x)\,dx \\
& \qquad \qquad \leq 2 \sum_{F\in \F} \avg{f}_{F,\sigma} \sum_{\substack{Q\in
    \Ss_N\\ \pi_\F(Q)=F}} 
|Q|^{\frac{\alpha}{n}}\avg{\sigma}_Q \int_{E(Q)}
g_F(x)u(x)\,dx\\
 & \qquad \qquad \leq 2 \sum_{F\in \F} \avg{f}_{F,\sigma} 
\int_F L_\alpha^\Ss(\sigma\chi_F)(x) g_F(x)u(x)\,dx \\
 & \qquad \qquad \leq 2 \sum_{F\in \F} \avg{f}_{F,\sigma} 
\|L_\alpha^\Ss(\sigma\chi_F)\|_{L^{q}(u)}\|g_F\chi_\F\|_{L^{q'}(u)}\\
 & \qquad \qquad \leq 2 \M_\alpha \sum_{F\in \F} \avg{f}_{F,\sigma} 
\sigma(F)^{1/p}\|g_F\chi_\F\|_{L^{q'}(u)} \\
 & \qquad \qquad \leq 2 \M_\alpha 
\left( \sum_{F\in \F} \avg{f}_{F,\sigma}^p
  \sigma(F)\right)^{\frac{1}{p}}
\left( \sum_{F\in \F} \|g_F\chi_\F\|_{L^{q'}(u)} ^{p'}\right)^{\frac{1}{p'}}.
\end{align*}

We estimate each of these sums separately.  For the first we
use the $A_\infty$ property of cubes in $\F$ and Lemma~\ref{lemma:wtd-max}:
\begin{multline*} \left( \sum_{F\in \F} \avg{f}_{F,\sigma}^p
  \sigma(F)\right)^{\frac{1}{p}} 
\leq 2^{\frac{1}{p}} \left( \sum_{F\in \F} \avg{f}_{F,\sigma}^p
  \sigma(E_\F(F))\right)^{\frac{1}{p}} \\
\leq 2^{\frac{1}{p}} \left( \sum_{F\in \F} 
\int_{E_\F(F)} M_{\sigma}^\D f(x)^p \,d\sigma \right)^{\frac{1}{p}}  
\leq 2^{\frac{1}{p}} \left( \int_\subRn M_{\sigma}^\D f(x)^p \,d\sigma
\right)^{\frac{1}{p}} 
\leq C(n,p)\|f\|_{L^p(\sigma)}.
\end{multline*}

To estimate the second sum we use the fact that $q'\leq p'$:
\begin{multline*}
 \left( \sum_{F\in \F} \|g_F\chi_\F\|_{L^{q'}(u)}
  ^{p'}\right)^{\frac{1}{p'}}
\leq \left( \sum_{F\in \F} \|g_F\chi_\F\|_{L^{q'}(u)}
  ^{q'}\right)^{\frac{1}{q'}} \\
= \left( \int_{E_\F(F)} g(x)^{q'}u(x)\,dx\right) ^{\frac{1}{q'}}
\leq \left(\int_\subRn g(x)^{q'}u(x)\,dx\right) ^{\frac{1}{q'}} =1.
\end{multline*}
If we combine these two estimates we get the desired inequality.
\end{proof}

One consequence of this proof is that a weaker
condition on the operator is actually sufficient.   At the point we apply
the testing condition, we could replace $L_\alpha^\Ss(\sigma
\chi_F)$ with the smaller, localized operator
\[ L_{\alpha,F}^{\Ss, In}\sigma(x) 
= \sum_{\substack{Q\in \Ss \\ Q\subset F}} 
|Q|^{\frac{\alpha}{n}}\avg{\sigma}_Q \chi_{E(Q)}(x).  \]
The discarded portion of the sum contains no additional
information:  for all $x\in F$,
\[ \sum_{\substack{Q\in \Ss \\ F\subset Q}} 
|Q|^{\frac{\alpha}{n}}\avg{\sigma\chi_F}_Q \chi_{E(Q)}(x)
\leq \sigma(F) \sum_{k=1}^\infty |F|^{\frac{\alpha}{n}-1} 2^{\alpha-n} \chi_F(x)
\leq C(n,\alpha) |F|^{\frac{\alpha}{n}}\avg{\sigma}_F\chi_F(x). \]
The final characteristic function is over $F$ instead of $E(F)$, but
this yields a finite overlap and so does not substantially affect the rest
of the estimate.  We will consider such local testing conditions again
for the fractional integral operator below.

\subsection*{Testing conditions for fractional integral operators}

We now prove a testing condition theorem for fractional integrals.  If
we try to modify the proof of Theorem~\ref{thm:max-testing} we quickly
discover the main obstacle: since the sum defining $I_\alpha^\Ss$ is
over the characteristic functions $\chi_Q$ and not $\chi_{E(Q)}$, the
definition of the function $g_F$ must change.  There are
additional terms in the sum and the estimate for the norm of $g_F$
no longer works.  Another condition is required to evaluate this sum.

The need for such a condition is natural:  while a testing condition
for $I_\alpha$ is clearly necessary, Sawyer~\cite{MR719674}
constructed a counter-example showing that by itself it is not sufficient.
Motivated by work of Muckenhoupt and Wheeden~\cite{MR0447956} that suggested 
duality played a role,
Sawyer~\cite{MR930072} showed that the testing condition plust the testing condition derived from
the dual inequality for
$I_\alpha$ is necessary and sufficient.  Necessity follows
immediately:  if $I_\alpha (\cdot\sigma): L^p(\sigma) \rightarrow
L^q(u)$, then, since $I_\alpha$ is a self-adjoint linear operator, we
have that $I_\alpha (\cdot u) : L^{q'}(u) \rightarrow
L^{p'}(\sigma)$.  Moreover, it turns out that this ``dual'' testing condition is the right
one for the weak type inequality.  

\begin{theorem} \label{thm:frac-testing}
Given $0\leq \alpha <n$, $1< p \leq q< \infty$, and a pair of
weights $(u,\sigma)$, then the following are equivalent:
\begin{enumerate}

\item The testing condition
\begin{gather*}
\I_\alpha = \sup_Q \sigma(Q)^{-\frac{1}{p}}
\left(\int_Q I_\alpha(\chi_Q \sigma)(x)^qu(x)\,dx\right)^{\frac{1}{q}}
< \infty, \\
\intertext{and the dual testing condition}
\I_\alpha^* = \sup_Q u(Q)^{-\frac{1}{q'}}
\left(\int_Q I_\alpha(\chi_Qu)(x)^{p'}\sigma(x)\,dx\right)^{\frac{1}{p'}}
< \infty,
\end{gather*}
hold;

\item For all $f\in L^p(\sigma)$,
\[ \left( \int_\subRn |I_\alpha (f\sigma)(x)|^q
  u(x)\,dx\right)^{\frac{1}{q}}
\leq C(n,p,q)(\I_\alpha+\I_\alpha^*) \left( \int_\subRn |f(x)|^p
  \sigma(x)\,dx\right)^{\frac{1}{p}}. \]
\end{enumerate}

The dual testing condition
is equivalent to the weak type inequality
\[ \sup_{t>0} t\, u(\{ x\in \R^n : |I_\alpha(f\sigma)(x)|>t \}) ^{\frac{1}{q}}
\leq C(n,p,q)\I_\alpha^* \left( \int_\subRn |f(x)|^p
  \sigma(x)\,dx\right)^{\frac{1}{p}}. \]
\end{theorem}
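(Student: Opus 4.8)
The plan is to imitate the proof of Theorem~\ref{thm:max-testing}, the new feature being that \emph{two} corona decompositions are needed and that the dual testing condition $\I_\alpha^*$ is forced on us by a ``non-local'' part of the sum that has no counterpart for $M_\alpha$. First I would reduce to sparse operators: by Propositions~\ref{prop:dyadic-frac} and~\ref{prop:frac-sparse}, $I_\alpha(f\sigma)$ is dominated by a finite sum of sparse operators $I_\alpha^{\Ss}(f\sigma)$, while the pointwise bound $I_\alpha^{\Ss}\le c(n,\alpha)^{-1}I_\alpha$ contained in their proofs shows that the localized testing quantities $\sigma(Q)^{-1/p}\|I_\alpha^{\Ss}(\chi_Q\sigma)\chi_Q\|_{L^q(u)}$ and $u(Q)^{-1/q'}\|I_\alpha^{\Ss}(\chi_Q u)\chi_Q\|_{L^{p'}(\sigma)}$ are $\lesssim\I_\alpha$ and $\lesssim\I_\alpha^*$. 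Fix a sparse $\Ss\subset\D$. Necessity is immediate on taking $f=\chi_Q$, and, since $I_\alpha^{\Ss}$ is self-adjoint, on dualizing and testing again, so only sufficiency remains.

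For the weak-type equivalence, use that for $1<q<\infty$,
\[ \|F\|_{L^{q,\infty}(u)}\;\approx\;\sup_{0<u(E)<\infty}\,u(E)^{-\frac1{q'}}\int_E|F|\,u\,dx. \]
Taking $F=I_\alpha^{\Ss}(f\sigma)$, writing $\int_E I_\alpha^{\Ss}(f\sigma)\,u\,dx=\int_\subRn f\sigma\cdot I_\alpha^{\Ss}(\chi_E u)\,dx$ by self-adjointness, and dualizing over $f\in L^p(\sigma)$, the weak-type inequality becomes equivalent to $\|I_\alpha^{\Ss}(\chi_E u)\|_{L^{p'}(\sigma)}\lesssim u(E)^{1/q'}$ for all such $E$. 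Restricting to cubes (and discarding, via $I_\alpha^{\Ss}\le c^{-1}I_\alpha$, the bounded part of $I_\alpha^{\Ss}(\chi_Q u)$ supported off $Q$) shows this implies $\I_\alpha^*<\infty$; the converse, $\I_\alpha^*<\infty\Rightarrow$ the estimate for arbitrary $E$, follows from a corona decomposition of $\chi_E$ with respect to $u$, a one-sided and simpler version of the argument below.

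For the sufficiency of $\I_\alpha+\I_\alpha^*$, I would argue as in Theorem~\ref{thm:weak-max-one} to reduce to $f\ge0$ bounded with compact support, then dualize: for some $g\ge0$ with $\|g\|_{L^{q'}(u)}=1$,
\[ \|I_\alpha^{\Ss}(f\sigma)\|_{L^q(u)}=\sum_{Q\in\Ss}|Q|^{\frac{\alpha}{n}}\avg{f\sigma}_Q\int_Q g\,u\,dx=\sum_{Q\in\Ss}|Q|^{\frac{\alpha}{n}-1}(f\sigma)(Q)\,(gu)(Q), \]
an expression symmetric under $(f,\sigma,p)\leftrightarrow(g,u,q')$. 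Now form the corona cubes $\F$ of $f$ with respect to $\sigma$ and the corona cubes $\G$ of $g$ with respect to $u$. Each $Q\in\Ss$ has parents $\pi_\F(Q)\in\F$ and $\pi_\G(Q)\in\G$, both containing $Q$, so one contains the other; split the sum into its parts over $\{\pi_\F(Q)\subseteq\pi_\G(Q)\}$ and $\{\pi_\G(Q)\subsetneq\pi_\F(Q)\}$, and by the symmetry it suffices to treat the first. Grouping by $F=\pi_\F(Q)$ and using $\avg{f}_{Q,\sigma}\le2\avg{f}_{F,\sigma}$, the inner sum over $\{Q:\pi_\F(Q)=F\}$ of $|Q|^{\frac{\alpha}{n}}\avg{\sigma}_Q(gu)(Q)$ equals $\int\big(\sum_Q|Q|^{\frac{\alpha}{n}}\avg{\sigma}_Q\chi_Q\big)g\,u\,dx$, whose summing function is supported in $F$ and pointwise $\le I_\alpha^{\Ss}(\chi_F\sigma)$, so by Hölder and the localized testing bound it is $\lesssim\I_\alpha\,\sigma(F)^{1/p}\|g\chi_F\|_{L^{q'}(u)}$. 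Summing over $F$ by Hölder then yields $\I_\alpha\|f\|_{L^p(\sigma)}\big(\sum_{F\in\F}\|g\chi_F\|_{L^{q'}(u)}^{p'}\big)^{1/p'}$ --- but, because the corona cubes are nested, this last sum overcounts $g$, whereas in Theorem~\ref{thm:max-testing} the characteristic functions $\chi_{E(Q)}$ made the analogous sum telescope. The fix is to peel off the part of $g$ penetrating below $E_\F(F)$; using the constraint $\pi_\G(Q)\supseteq F$ built into this half of the sum, that leftover reassembles (after a further decomposition) into an expression of the same shape with $(f,\sigma,p)$ and $(g,u,q')$ interchanged, and is therefore absorbed by $\I_\alpha^*$. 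With the split so arranged, the surviving sums are closed exactly as in Theorem~\ref{thm:max-testing}: the $A_\infty$ property of the corona cubes ($\sigma(E_\F(F))\ge\tfrac12\sigma(F)$, and likewise for $\G$) together with Lemma~\ref{lemma:wtd-max} give $\sum_{F\in\F}\avg{f}_{F,\sigma}^p\sigma(F)\lesssim\|f\|_{L^p(\sigma)}^p$ and $\sum_{G\in\G}\avg{g}_{G,u}^{q'}u(G)\lesssim1$, and Hölder (using $q'\le p'$) completes the bound by $C(\I_\alpha+\I_\alpha^*)\|f\|_{L^p(\sigma)}$.

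The hardest part is precisely this reallocation. For $M_\alpha$ one corona suffices because $L_\alpha^{\Ss}$ lives on the disjoint sets $E(Q)$; for $I_\alpha^{\Ss}$ the sum over the non-disjoint cubes $Q$ has a diagonal piece, handled by $\I_\alpha$ as in Theorem~\ref{thm:max-testing}, and an off-diagonal piece for which --- by Sawyer's counterexample --- $\I_\alpha$ alone is useless. Organizing the decomposition so the off-diagonal piece lands on the side controlled by $\I_\alpha^*$, and verifying that the double sum over pairs of corona cubes still collapses through the Carleson estimates for $\F$ and $\G$, is the substance of the Lacey--Sawyer--Uriarte-Tuero theorem and of Hyt\"onen's streamlining of it.
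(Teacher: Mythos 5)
Your overall architecture does match the proof the paper gives (Hyt\"onen's argument): dualize against $g$ with $\|g\|_{L^{q'}(u)}=1$, run two parallel corona decompositions ($\F$ for $f$ with respect to $\sigma$, $\G$ for $g$ with respect to $u$), split the sum over $Q$ according to which corona parent contains the other, treat the two halves with the two testing constants, and close with the $A_\infty$ property of the corona cubes, Lemma~\ref{lemma:wtd-max}, and H\"older using $q'\leq p'$. The necessity argument is the same, and the reduction to a dyadic model is as in the paper (the paper works with $I_\alpha^\D$ rather than $I_\alpha^\Ss$, remarking that sparseness buys nothing here, which is immaterial). The weak-type equivalence is not proved in the paper either (it refers to Lacey--Sawyer--Uriarte-Tuero), so your sketch there is only as incomplete as the text itself.

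There is, however, a genuine gap exactly at the step you call ``the fix.'' You pair the half $\{\pi_\F(Q)\subseteq\pi_\G(Q)\}$ with the grouping by $F=\pi_\F(Q)$ and the constant $\I_\alpha$; in the paper it is the \emph{opposite} half, $\{\pi_\G(Q)\subseteq\pi_\F(Q)\}$, that is grouped by $F$ and controlled by $\I_\alpha$, and this is not a cosmetic choice. After the testing condition is applied on $F$, one must bound $\sum_F\|g_F\|_{L^{q'}(u)}^{q'}$, where $g_F=g\chi_{E_\F(F)}+\sum_{F'\in\eta_\F(F)}\avg{g}_{F',u}\chi_{F'}$ is $g$ averaged over the $\F$-children of $F$ (legitimate because every relevant child $F'$ lies strictly inside every relevant $Q$). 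The penetrating part $\sum_F\sum_{F'}\avg{g}_{F',u}^{q'}u(F')$ is summable only because, in the half $\pi_\G(Q)\subseteq F$, the relevant children satisfy $\pi_\G(F')\subseteq F$: one bounds $\avg{g}_{F',u}\leq 2\avg{g}_{\pi_\G(F'),u}$, packs the $u$-masses inside the $\G$-cubes, and finishes with $u(G)\leq 2u(E_\G(G))$ and $M_u^\D g$; no testing constant is used on this part at all. In your half, $\pi_\G(F')$ is only contained in $G\supseteq F$ and can be far larger than $F$; a fixed $\G$-cube can serve as $\pi_\G(F')$ for $\F$-children of arbitrarily many nested $F$'s, so the packing argument is unavailable and $\sum_F\|g\chi_F\|_{L^{q'}(u)}^{q'}$ (smoothed or not) need not be bounded. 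Your proposed repair --- that the leftover ``reassembles into an expression of the same shape with the roles interchanged and is absorbed by $\I_\alpha^*$'' --- is not substantiated and is not how the argument closes: the leftover is no longer of the bilinear form $\sum_Q|Q|^{\frac{\alpha}{n}-1}(f\sigma)(Q)(gu)(Q)$ over a subfamily of $\D$, and $\I_\alpha^*$ enters the proof only through the entire second half, grouped by $G=\pi_\G(Q)$, using $\avg{g}_{Q,u}\leq 2\avg{g}_{G,u}$ and the test function $\chi_G u$. As written, the heart of the sufficiency proof is missing.
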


The equivalence between the dual testing
condition and the weak type inequality has the following very deep
corollary relating the weak and strong type inequalities.

\begin{corollary} \label{cor:weak-weak-strong}
Given $0<\alpha<n$ and $1< p\leq q< \infty$, 
\[  \|I_\alpha(\cdot\sigma)\|_{L^p(\sigma)\rightarrow L^q(u)} \approx
\|I_\alpha(\cdot\sigma)\|_{L^p(\sigma)\rightarrow L^{q,\infty}(u)} +
\|I_\alpha(\cdot u)\|_{L^{q'}(u)\rightarrow L^{p',\infty}(\sigma)}. \]
\end{corollary}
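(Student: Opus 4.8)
The plan is to obtain Corollary~\ref{cor:weak-weak-strong} as a formal consequence of Theorem~\ref{thm:frac-testing}, applied twice: once to the pair $(u,\sigma)$ with exponents $(p,q)$, and once to the ``dual'' data $(\sigma,u)$ with exponents $(q',p')$. The second application is legitimate because $1<p\le q<\infty$ forces $1<q'\le p'<\infty$, so the hypotheses of the theorem are met in the dual configuration as well.

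First, the equivalence of (1) and (2) in Theorem~\ref{thm:frac-testing} gives
\[ \|I_\alpha(\cdot\sigma)\|_{L^p(\sigma)\to L^q(u)} \approx \I_\alpha + \I_\alpha^*, \]
while the final assertion of the theorem identifies the weak type norm with the dual testing constant:
\[ \|I_\alpha(\cdot\sigma)\|_{L^p(\sigma)\to L^{q,\infty}(u)} \approx \I_\alpha^*. \]
Next I apply that same final assertion to the dual configuration. Since $I_\alpha$ is self-adjoint, $I_\alpha(\cdot u):L^{q'}(u)\to L^{p',\infty}(\sigma)$ is exactly the operator to which Theorem~\ref{thm:frac-testing} applies after interchanging $u$ and $\sigma$ and replacing $(p,q)$ by $(q',p')$; a direct inspection of the definitions shows that the ``dual testing condition'' of this interchanged configuration is precisely $\I_\alpha$, so
\[ \|I_\alpha(\cdot u)\|_{L^{q'}(u)\to L^{p',\infty}(\sigma)} \approx \I_\alpha. \]

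Adding the last two displays and comparing with the first yields
\[ \|I_\alpha(\cdot\sigma)\|_{L^p(\sigma)\to L^q(u)} \approx \I_\alpha + \I_\alpha^* \approx \|I_\alpha(\cdot\sigma)\|_{L^p(\sigma)\to L^{q,\infty}(u)} + \|I_\alpha(\cdot u)\|_{L^{q'}(u)\to L^{p',\infty}(\sigma)}, \]
which is the corollary. All of the analytic work is hidden inside Theorem~\ref{thm:frac-testing}; the only point that requires attention is the bookkeeping with duality --- namely, verifying that the weighted reformulation commutes with taking adjoints, so that the weak type bound for $I_\alpha(\cdot u)$ really is the one delivered by the theorem applied to the swapped data. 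This is immediate from the self-adjointness of $I_\alpha$ and the symmetric role played by $u$ and $\sigma$ in the statement of the testing conditions, so I do not expect any genuine obstacle.
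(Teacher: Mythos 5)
Your argument is correct and is exactly how the paper intends Corollary~\ref{cor:weak-weak-strong} to follow: the strong norm is comparable to $\I_\alpha+\I_\alpha^*$, the weak-type/dual-testing equivalence in Theorem~\ref{thm:frac-testing} identifies $\I_\alpha^*$ with $\|I_\alpha(\cdot\sigma)\|_{L^p(\sigma)\rightarrow L^{q,\infty}(u)}$, and the same equivalence applied to the swapped data $(\sigma,u)$, $(q',p')$ (legitimate since $1<q'\leq p'<\infty$) identifies $\I_\alpha$ with $\|I_\alpha(\cdot u)\|_{L^{q'}(u)\rightarrow L^{p',\infty}(\sigma)}$. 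Your duality bookkeeping, in particular that $(p')'=p$, $(q')'=q$ so the dual testing constant of the interchanged configuration is precisely $\I_\alpha$, is accurate, so the proposal matches the paper's (implicit) proof.
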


It is conjectured that a similar equivalence holds for singular
integrals.  However, this is a much more difficult problem and was
only recently proved for the Hilbert transform on weighted $L^2$ by Lacey, {\em
  et al.}~\cite{2012arXiv1201.4319L}.

Theorem~\ref{thm:frac-testing} was first proved by
Sawyer~\cite{MR719674,MR930072} (see also \cite{MR1175693}).  The
proof of the weak type inequality is relatively straightforward and
readily adapts to the case of dyadic operators
(see~\cite{LacSawUT2010}).  We will omit this proof and refer the
reader to these papers.  The proof of the strong type inequality is
more difficult and even for the dyadic fractional integral
operator was initially quite complex:  see Lacey, Sawyer and
Uriarte-Tuero~\cite{LacSawUT2010}.  Recently, however, Hyt\"onen has
given a much simpler proof that relies on the corona decomposition and
which is very similar to the proof given above for the fractional maximal
operator.  Besides its elegance, this proof has the advantage that it
makes clear why two testing conditions are needed:  it provides a
means of evaluating a summation over non-disjoint cubes $Q$ instead of
over disjoint sets $E(Q)$ as we did for the fractional maximal
operator.  We give this proof below.  Another proof that takes a
somewhat different approach is due to Treil~\cite{treilP}.  

\begin{proof}[Proof of Theorem~\ref{thm:frac-testing}]
As we already discussed, the necessity of the two testing conditions
is immediate.  To prove sufficiency,  we will follow the outline of
the proof of Theorem~\ref{thm:max-testing}, highlighting the changes.

First, by arguing as we did in the
proof of Theorem~\ref{thm:strong-frac-one} we can assume that $f$ is non-negative,
bounded and has compact support.   Further,  it will suffice to prove
the strong type inequality for the dyadic operator $I_\alpha^\D$,
where $\D$ is any dyadic grid, assuming that the testing condition
holds for this operator.   (We could in fact pass to the sparse
operator $I_\alpha^\Ss$, but unlike for the fractional maximal
operator, sparseness with respect to Lebesgue measure does not
simplify the proof.)

Fix a dyadic grid $\D$ and for each $N>0$ let $\D_N$ be the collection
of dyadic cubes $Q$ in $\D$ such that $\ell(Q) \leq 2^N$.   Then by duality
and the monotone convergence theorem, it will suffice to prove that for any $g\in L^{q'}(u)$,
$\|g\|_{L^{q'}(u)}=1$, 
\[ \sum_{Q\in \D_N} |Q|^{\frac{\alpha}{n}}\avg{f\sigma}_Q \int_Q
g(x)u(x)\,dx \leq C(n,p,q)(\I_\alpha+\I_\alpha^*)\|f\|_{L^p(\sigma)}.  \]
We now form two ``parallel'' corona decompositions.  For each cube in
$\D_N$ of side-length $2^N$ form the corona decomposition of $f$ with
respect to $\sigma$; denote the union of all of these cubes by $\F$.
(Since $f$ has compact support we in fact only form a finite number
of such decompositions.)  Simultaneously, on the same cubes  form the corona
decomposition of $g$ with respect to $u$; denote the union of these
sets of cubes by $\G$.   

We now decompose the sum above as follows:
\begin{align*}
\sum_{Q\in \D_N} |Q|^{\frac{\alpha}{n}}\avg{f\sigma}_Q \int_Q
g(x)u(x)\,dx 
& = \sum_{\substack{F\in \F \\ G\in \G}} 
\sum_{\substack{Q\in \D_N\\\pi_\F(Q)=F\\ \pi_\G(Q)=G}} \\
& = \sum_{F\in \F}\sum_{\substack{G\in \G \\ G\subseteq F}} 
\sum_{\substack{Q\in \D_N\\\pi_\F(Q)=F\\ \pi_\G(Q)=G}}
+  \sum_{G\in \G}\sum_{\substack{F\in \F \\ F\subsetneq G}} 
\sum_{\substack{Q\in \D_N\\\pi_\F(Q)=F\\ \pi_\G(Q)=G}} \\
& = \Sigma_1 + \Sigma_2.
\end{align*}

\medskip

We first estimate $\Sigma_1$.
Fix $F$, $G\subset F$ and $Q$ such that $\pi_\F(Q)=F$ and $\pi_\G(Q)=G$. (If no such $G$ or
$Q$ exists, then this term in the sum is vacuous and can be
disregarded.) Let $F' \in \eta_\F(F)$ be such that $Q\cap F'\neq
\emptyset$.  We cannot have $Q\subseteq F'$, since this would imply
that $\pi_\F(Q)\subseteq F' \subsetneq F$, a contradiction.  Hence,
$F' \subsetneq Q\subset G$.   We now define the function $g_F$ by
\begin{multline*}
\int_Q g(x)u(x)\,dx = \int_{Q\cap E_\F(F)} g(x)u(x)\,dx + 
\sum_{F' \in \eta_\F(F)} \int_{Q\cap F'} g(x)u(x)\,dx \\
= \int_Q \left( g(x)\chi_{E_\F(F)} + \sum_{F' \in \eta_\F(F)} 
\avg{g}_{F',u}\chi_{F'}(x)\right)u(x)\,dx = \int_Q g_F(x)u(x)\,dx.
\end{multline*}
Moreover,  in the definition of $g_F$, the sum is over
$F'\subsetneq Q\subset G$, so we can actually restrict the sum to be over
$F'$ in the set
\[ \eta_\F^*(F) = \{ F' \in \eta_\F(F) :  \pi_\G(F')\subseteq F \}. \]
 
We can now argue as follows: by the definition of corona cubes, the testing condition and
H\"older's inequality,
\begin{align*}
\Sigma_1 
& \leq 2 \sum_{F\in \F} \avg{f\sigma^{-1}}_{F,\sigma}
\sum_{\substack{G\in \G \\ G\subseteq F}} 
\sum_{\substack{Q\in \D_N\\\pi_\F(Q)=F\\ \pi_\G(Q)=G}}
|Q|^{\frac{\alpha}{n}}\avg{\sigma}_Q \int_Q g_F(x)u(x)\,dx \\
& \leq 2 \sum_{F\in \F} \avg{f\sigma^{-1}}_{F,\sigma}
\sum_{\substack{Q\in \D \\ Q\subset F}} 
|Q|^{\frac{\alpha}{n}}\avg{\sigma}_Q \int_Q g_F(x)u(x)\,dx \\
& \leq 2 \sum_{F\in \F} \avg{f\sigma^{-1}}_{F,\sigma}
\int_F I_\alpha^\D(\chi_F \sigma)(x) g_F(x) u(x)\,dx \\
& \leq 2 \sum_{F\in \F} \avg{f\sigma^{-1}}_{F,\sigma}
\| I_\alpha^\D(\chi_F \sigma)\chi_F\|_{L^q(u)}\|g_F\|_{L^{q'}(u)} \\
&  \leq 2\I_\alpha \sum_{F\in \F} \avg{f\sigma^{-1}}_{F,\sigma}
\sigma(F)^{1/p} \|g_F\|_{L^{q'}(u)} \\
& \leq 2\I_\alpha \left(\sum_{F\in \F} \avg{f\sigma^{-1}}_{F,\sigma}^p
\sigma(F)\right)^{\frac{1}{p}}
 \left(\sum_{F\in \F} \|g_F\|_{L^{q'}(u)} ^{p'}\right)^{\frac{1}{p}}.
\end{align*}

The first sum in the last term we estimate exactly as we did in the
proof of Theorem~\ref{thm:max-testing}, getting that it is bounded by
$C(n,p)\|f\|_{L^p(\sigma)}$.   To estimate the second sum we use the
fact that $q'\leq p'$ and divide it
into two parts to get 
\begin{multline*}
\left(\sum_{F\in \F} \|g_F\|_{L^{q'}(u)} ^{p'}\right)^{\frac{1}{p}}
\leq \left(\sum_{F\in \F} \|g_F\|_{L^{q'}(u)}
  ^{q'}\right)^{\frac{1}{q'}} \\
\leq  \left(\sum_{F\in \F} \int_{E_\F(F)} g(x)^{q'}u(x)\,dx \right)^{\frac{1}{q'}}
+\left(\sum_{F\in \F}  \sum_{F'\in \eta_\F^*(F)}\avg{g}_{F',u}^{q'}u(F')   \right)^{\frac{1}{q'}}.
\end{multline*}
We again estimate the first sum as we did in the
proof of Theorem~\ref{thm:max-testing}, getting that it is bounded by
$1$.  To bound the second sum, we use the properties of the corona
cubes in $\F$ and $\G$, the definition of $\eta_\F^*(F)$, and Lemma~~\ref{lemma:wtd-max}:
\begin{align*}
\sum_{F\in \F}  \sum_{F'\in \eta_\F^*(F)}\avg{g}_{F',u}^{q'}u(F') 
& = \sum_{F\in \F} \sum_{\substack{G\in \G \\ G\subseteq F}}
\sum_{\substack{F' \in
    \eta_\F(F)\\\pi_\G(F')=G}} \avg{g}_{F',u}^{q'}u(F') \\
& \leq 2^{q'}\sum_{F\in \F} \sum_{\substack{G\in \G \\ G\subseteq F}}
\avg{g}_{G,u}^{q'} \sum_{\substack{F' \in
    \eta_\F(F)\\\pi_\G(F')=G}} u(F') \\
& \leq 2^{q'}\sum_{F\in \F} \sum_{\substack{G\in \G \\ G\subseteq F}}
\avg{g}_{G,u}^{q'} u(G) \\
& \leq 2^{q'+1} \sum_{F\in \F} \sum_{\substack{G\in \G \\ G\subseteq F}}
 \avg{g}_{G,u}^{q'} u(E_\G(G)) \\
& \leq 2^{q'+1} \sum_{F\in \F} \sum_{\substack{G\in \G \\ G\subseteq F}}
\int_{E_\G(G)} M_u^\D g(x)^{q'} u(x)\,dx \\
& \leq 2^{q'+1} \int_\subRn  M_u^\D g(x)^{q'} u(x)\,dx \\
& \leq C(q) \int_\subRn g(x)^{q'} u(x)\,dx \\
& = C(q).
\end{align*}
This completes the estimate of $\Sigma_1$.

The estimate for $\Sigma_2$ is exactly the same, exchanging the roles
of $(f,\sigma)$ and $(g,u)$ and using the dual testing condition which
yields the constant $\I_\alpha^*$.
This completes the proof.
\end{proof}

\subsection*{Local and global testing conditions}

An examination of the proof of Theorem~\ref{thm:frac-testing} shows
that we did not actually need the full testing conditions on the
operator $I_\alpha^\D$; rather, we used the following localized
testing conditions:  
\begin{gather*}
\I_{\D,in}  = \sup_Q \sigma(Q)^{-\frac{1}{p}}
\left(\int_Q I_{\alpha,Q}^{\D,in}(\chi_Q \sigma)(x)^qu(x)\,dx\right)^{\frac{1}{q}}
< \infty, \\
\I_{\D,in}^* = \sup_Q u(Q)^{-\frac{1}{q'}}
\left(\int_Q I_{\alpha,Q}^{\D,in}(\chi_Qu)(x)^{p'}\sigma(x)\,dx\right)^{\frac{1}{p'}}
< \infty,
\end{gather*}
where for $x\in Q$, 
\[  I_{\alpha,Q}^{\D,in}(\chi_Q\sigma)(x) 
=\sum_{\substack{P\in \D\\ P\subseteq Q}}
|P|^{\frac{\alpha}{n}}\avg{\sigma}_P \chi_P(x). \]
Similarly, the weak type inequality is equivalent to the dual local testing
condition (i.e., the condition that $\I_{\D,in}^*<\infty$).  
This fact is not particular to the dyadic fractional integrals:  it is
a general property of positive dyadic operators and reflects the fact
that they are, in some sense, local operators.  See Lacey~{\em et
  al.}~\cite{LacSawUT2010}. 

Somewhat surprisingly, when $p<q$ the local
testing conditions can be replaced with global testing conditions:
\begin{gather*}
\I_{\D,out}  = \sup_Q \sigma(Q)^{-\frac{1}{p}}
\left(\int_\subRn I_{\alpha,Q}^{\D,out}(\chi_Q \sigma)(x)^qu(x)\,dx\right)^{\frac{1}{q}}
< \infty, \\
\I_{\D,out}^* = \sup_Q u(Q)^{-\frac{1}{q'}}
\left(\int_\subRn I_{\alpha,Q}^{\D,out}(\chi_Qu)(x)^{p'}\sigma(x)\,dx\right)^{\frac{1}{p'}}
< \infty,
\end{gather*}
where for $x\in Q$, 
\[  I_{\alpha,Q}^{\D,out}(\chi_Qu)(x) 
=\sum_{\substack{P\in \D\\ Q\subsetneq P}}
|P|^{\frac{\alpha}{n}}\avg{\sigma\chi_Q}_P \chi_P(x). \]
We record this fact as theorem; we will discuss one of its
consequences in
Section~\ref{section:separated}.  For a proof, see ~\cite{LacSawUT2010}. 

\begin{theorem} \label{thm:lsut-global}
Given $0\leq \alpha <n$, $1< p < q< \infty$, a dyadic grid
$\D$, and a pair of weights $(u,\sigma)$, then:
\begin{enumerate}

\item $\|I_\alpha^\D(\cdot\sigma)\|_{L^p(\sigma)\rightarrow L^q(u)}
  \approx I_{\D,out} + \I_{\D,out}^*$;

\item $\|I_\alpha^\D(\cdot\sigma)\|_{L^p(\sigma)\rightarrow L^{q,\infty}(u)}
  \approx \I_{\D,out}^*$.

\end{enumerate}
\end{theorem}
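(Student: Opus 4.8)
The plan is to reduce the global testing conditions to the \emph{local} ones and then invoke the local testing theorem: for the positive dyadic operator $I_\alpha^\D$ the strong-type bound $L^p(\sigma)\to L^q(u)$ is equivalent to $\I_{\D,in}+\I_{\D,in}^*<\infty$ (the localized form of Theorem~\ref{thm:frac-testing}), and the weak-type bound $L^p(\sigma)\to L^{q,\infty}(u)$ to $\I_{\D,in}^*<\infty$. So it suffices to prove that, when $p<q$,
\[
\I_{\D,out}+\I_{\D,out}^*\ \approx\ \I_{\D,in}+\I_{\D,in}^*,\qquad \I_{\D,out}^*\ \approx\ \I_{\D,in}^*,
\]
where I write $\I_{\D,out}$ for the quantity denoted $I_{\D,out}$ in the statement. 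All the estimates below are symmetric under interchanging $(\sigma,p)$ with $(u,q')$, so I discuss only the unstarred ones.

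One inequality is elementary. For every cube $Q$ one has the pointwise bound $I_{\alpha,Q}^{\D,out}(\chi_Q\sigma)\le I_\alpha^\D(\chi_Q\sigma)$ on all of $\R^n$ — every dyadic cube meeting the support of $\chi_Q\sigma$ is either contained in $Q$ or properly contains it, and the outer operator keeps only the second kind — so
\[
\Big(\int_\subRn I_{\alpha,Q}^{\D,out}(\chi_Q\sigma)^q u\Big)^{1/q}\le\|I_\alpha^\D(\chi_Q\sigma)\|_{L^q(u)}\le\|I_\alpha^\D(\cdot\sigma)\|_{L^p(\sigma)\to L^q(u)}\,\sigma(Q)^{1/p};
\]
taking the supremum over $Q$, and dualizing, gives ``$\gtrsim$'' in part~(1). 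For part~(2) one uses in addition that $I_{\alpha,Q}^{\D,out}(\chi_Q u)$ is constant on $Q$ and on each of its ancestor shells, with values decaying geometrically up the shells because $\alpha<n$; a function with this layered structure has comparable weak- and strong-$L^{p'}(\sigma)$ norms once $I_\alpha^\D(\cdot u)$ is weak-type bounded, which yields ``$\gtrsim$'' in part~(2).

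The substance is the reverse comparison $\I_{\D,in}\lesssim\I_{\D,out}+\I_{\D,out}^*$ (together with $\I_{\D,in}^*\lesssim\I_{\D,out}^*$), and it is here that $p<q$ is used. Fix a cube $Q_0$; since the dyadic $P\subseteq Q_0$ through a point $x\in Q_0$ are totally ordered, $I_{\alpha,Q_0}^{\D,in}(\chi_{Q_0}\sigma)(x)=\sum_{k\ge0}|P_k(x)|^{\alpha/n}\avg{\sigma}_{P_k(x)}$, with $P_k(x)$ the $k$-th dyadic descendant of $Q_0$ through $x$. A crude scale-by-scale estimate already shows — using only the $A$-part $|P|^{\alpha/n}\avg{\sigma}_P\,u(P)^{1/q}\lesssim\I_{\D,out}\,\sigma(P)^{1/p}$ of the outer testing condition on each $P$, the inequality $q/p\ge1$, and the fact that a fixed dyadic generation partitions $Q_0$ — that each generation below $Q_0$ contributes at most $\I_{\D,out}\,\sigma(Q_0)^{1/p}$ to $\|I_{\alpha,Q_0}^{\D,in}(\chi_{Q_0}\sigma)\|_{L^q(u,Q_0)}$, but with no decay in the generation index. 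To make the sum converge one runs a stopping-time (corona) decomposition of $Q_0$ adapted to the pair $(u,\sigma)$, as in Section~\ref{section:testing}: along a corona piece the relevant averages are essentially constant, so the geometric series $\sum|P|^{\alpha/n}$ bounds that piece's contribution by $\lesssim|F|^{\alpha/n}\avg{\sigma}_F$ on the stopping cube $F$ — a quantity comparable to the value of $I_{\alpha,F}^{\D,out}(\chi_F\sigma)$ there, hence controlled by the outer testing condition on $F$ — while each passage through a stopping cube consumes a definite fraction of the controlling mass, supplying geometric decay across coronas. Summing over the stopping cubes, using their sparseness together with the extra room afforded by $p<q$ to outrun the number of corona generations, then yields $\|I_{\alpha,Q_0}^{\D,in}(\chi_{Q_0}\sigma)\|_{L^q(u,Q_0)}\lesssim(\I_{\D,out}+\I_{\D,out}^*)\,\sigma(Q_0)^{1/p}$; taking the supremum over $Q_0$ and arguing dually completes the comparison, and hence the theorem.

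The main obstacle is this last summation over corona generations. It is exactly the point at which the strict inequality $p<q$ is indispensable: one needs a genuine gain from $q>p$ to make the series converge, this gain degenerates at $p=q$, and in fact for $p=q$ the global testing conditions do not in general imply the local ones — they detect only a single cube together with its ancestor tail, not the whole subtree below it — so there is no counterpart of Theorem~\ref{thm:lsut-global} in that case. A related delicate point is choosing a stopping rule that simultaneously makes the per-corona estimate geometric and keeps the stopping cubes sparse with respect to the appropriate measure, so that the geometrically decaying outer tails on ancestors and the telescoping corona sums over descendants combine correctly.
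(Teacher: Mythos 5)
The first thing to note is that the paper does not prove Theorem~\ref{thm:lsut-global} at all: it records the statement and refers to \cite{LacSawUT2010} for the proof, so your argument has to stand entirely on its own. Its easy half essentially does: every term of $I_{\alpha,Q}^{\D,out}(\chi_Q\sigma)$ occurs in $I_\alpha^\D(\chi_Q\sigma)$ and all terms are nonnegative (your justification that every dyadic cube is contained in or contains $Q$ is false for non-dyadic $Q$, but positivity alone suffices), so the strong norm dominates $\I_{\D,out}$; and the weak-type norm dominates $\I_{\D,out}^*$ by Lorentz duality, since $I_\alpha^\D(\cdot\sigma):L^p(\sigma)\to L^{q,\infty}(u)$ is equivalent to $I_\alpha^\D(\cdot u):L^{q',1}(u)\to L^{p'}(\sigma)$ and $\|\chi_Q\|_{L^{q',1}(u)}\approx u(Q)^{1/q'}$. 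Your stated mechanism for this last point --- that the hypothesis makes $I_\alpha^\D(\cdot u)$ weak-type bounded --- is not what the hypothesis gives, but that is easily repaired.

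The genuine gap is in the substantive direction. Your plan is to prove $\I_{\D,in}\lesssim\I_{\D,out}+\I_{\D,out}^*$ and $\I_{\D,in}^*\lesssim\I_{\D,out}^*$ and then invoke local testing, but the corona step that is supposed to produce convergence is asserted, not proved, and the natural implementation fails. Stopping on the averages $\avg{\sigma}_P$ does give, for $\alpha>0$, the pointwise bound $I_{\alpha,Q_0}^{\D,in}(\chi_{Q_0}\sigma)\lesssim\sum_{F\in\F}|F|^{\frac{\alpha}{n}}\avg{\sigma}_F\chi_F$ with $\F$ sparse with respect to Lebesgue measure; but this is just the reduction to a sparse operator, and the only tool your sketch then deploys is the $A_{p,q}^\alpha$-type consequence $|F|^{\frac{\alpha}{n}}\avg{\sigma}_F\,u(F)^{1/q}\lesssim\I_{\D,out}\,\sigma(F)^{1/p}$ of the outer condition. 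Summing generation by generation with that bound gives at most $\I_{\D,out}\,\sigma(Q_0)^{1/p}$ per generation of stopping cubes --- exactly the non-convergent estimate you started from, merely transported to $\F$: Lebesgue sparseness forces $\sum_{F\in\F_j}|F|$ to decay geometrically in $j$, but it gives no decay whatsoever of $\sum_{F\in\F_j}\sigma(F)$ (take $\sigma$ concentrated near a point), so ``sparseness plus the room from $p<q$'' does not make the series converge, and no alternative stopping rule is indicated that would. Moreover, the sketch never actually uses $\I_{\D,out}^*$: no step integrates $I_{\alpha,Q}^{\D,out}(\chi_Q u)$ against $\sigma$, yet both constants are needed for part (1), and for part (2) you must bound $\I_{\D,in}^*$ by $\I_{\D,out}^*$ \emph{alone}, a separate argument you do not sketch; the $A_{p,q}^\alpha$-type condition by itself is certainly insufficient (cf.\ Example~\ref{example:bad-wt-frac}). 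Finally, the theorem allows $\alpha=0$, where the within-corona geometric series $\sum_k2^{-k\alpha}$ you rely on diverges, so even the pointwise corona bound fails on the stated range. The target inequality ``global testing controls local testing when $p<q$'' is plausible (it follows a posteriori from the theorem), but the mechanism for it is precisely what is missing; you would need to import the actual argument of \cite{LacSawUT2010} (or construct an equivalent one), not just the corona machinery of Section~\ref{section:testing}.
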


\subsection*{Testing conditions for commutators}
We conclude this section by considering testing conditions and
commutators.  This problem is completely open but we give
some conjectures and also sketch some possible approaches and the problems which
will be encountered.  

In light of the testing conditions in Theorems~\ref{thm:max-testing}
and~\ref{thm:frac-testing}, it seems reasonable to conjecture that for
$1<p\leq q < \infty$, $0<\alpha<n$ and $b\in BMO$, the following two
testing conditions,
\begin{gather*}
\C_\alpha = \sup_Q \sigma(Q)^{-\frac{1}{p}}
\left(\int_Q [b,I_\alpha](\chi_Q \sigma)(x)^qu(x)\,dx\right)^{\frac{1}{q}}
< \infty, \\
\C_\alpha^* = \sup_Q u(Q)^{-\frac{1}{q'}}
\left(\int_Q [b,I_\alpha](\chi_Qu)(x)^{p'}\sigma(x)\,dx\right)^{\frac{1}{p'}}
< \infty,
\end{gather*}
are necessary and sufficient for the strong type inequality
$[b,I_\alpha](\cdot \sigma) : L^p(\sigma) \rightarrow L^q(u)$, and
that the dual testing condition (i.e., $\C_\alpha^*<\infty$) is necessary and
sufficient for the weak type inequality.    The necessity of both
testing conditions for the strong type inequality is immediate.   The
necessity of the dual testing condition follows by duality:  see, for
instance, Sawyer~\cite{MR719674} for the proof of necessity for
$I_\alpha$ which adapts immediately to this case.

A significant obstacle for proving sufficiency is that we cannot pass
directly to dyadic operators, such as the operator $C_b^\D$ defined in
Proposition~\ref{prop:dyadic-commutator}.     The first problem is
that since $[b,I_\alpha]$ is not a positive operator, we do not have
an obvious pointwise equivalence between $[B,I_\alpha]$ and $C_b^\D$.
Therefore, we cannot pass from a testing condition for the commutator
to a dyadic testing condition as we did in the proof of
Theorem~\ref{thm:frac-testing}.   This means that we will be required
to work directly with the non-dyadic testing conditions.  This is very much
the same situation as is encountered for the Hilbert transform, and we
suspect that the same (sophisticated) techniques used there may be
applicable to this problem.  In addition, the recent work of Sawyer,
{\em et al.}~\cite{2013arXiv1302.5093S} on fractional singular
integrals in higher dimensions should also be relevant.

An intermediate result would be to prove that testing conditions for
the operator $C_b^\D$ are necessary and sufficient for that operator
to be bounded, which would yield a sufficient condition for
$[b,I_\alpha]$.  In this case the parallel corona decomposition used
in the proof of Theorem~\ref{thm:frac-testing} should be applicable,
but there remain some significant technical obstacles.  In particular,
it is not clear how to use the fact that $b$ is in $BMO$ in a way
which interacts well with the corona decomposition.  

\section{Bump conditions}
\label{section:bump}

In this section we discuss the second approach to two weight norm
inequalities, the $A_p$-bump conditions.  These were first
introduced by Neugebauer~\cite{MR687633}, but they were systematically
developed by P\'erez~\cite{MR1291534,MR1327936}.   They are a generalization
of the Muckenhoupt $A_p$ and Muckenhoupt-Wheeden $A_{p,q}$
conditions.    Compared to testing conditions they have several
relative strengths and weaknesses.  They only provide
sufficient conditions---they are not necessary, though examples show
that they are in some sense sharp (see~\cite{MR1713140}).  On the other hand,
they are ``universal'' sufficient conditions:  they give conditions
that hold for families of operators and are not conditioned to
individual operators.  (This property is  much more important in the
study of singular integrals than it is for the study of fractional
integrals.)   The bump conditions are geometric conditions on the
weights and do not involve the operator, so in practice it is easier
to check whether a pair of weights satisfies a bump condition.   In
addition, there exists a very flexible technique for constructing pairs
that satisfy a given condition:  the method of factored weights which
we will discuss below.  Finally, since the bump conditions are defined
with respect to cubes, they work well with the Calder\'on-Zygmund
decomposition and with dyadic grids in general.  

\subsection*{The $A_{p,q}^\alpha$ condition}

We begin by defining the natural generalization of the one weight
$A_{p,q}$ condition given in Definition~\ref{defn:Apq}.   To state it
we introduce the following notation for normalized, localized $L^p$
norms:  given $1\leq p<\infty$ and a cube $Q$,
\[ \|f\|_{p,Q} = \left(\avgint_Q |f(x)|^p\,dx\right)^{\frac{1}{p}}. \]

\begin{definition}  \label{eqn:two-wt-Apq}
Given $1<p\leq q< \infty$ and $0\leq \alpha<n$, we say that a pair of
weights $(u,\sigma)$ is in the class $A_{p,q}^\alpha$ if
\[ [u,\sigma]_{A_{p,q}^\alpha} = \sup_Q
|Q|^{\frac{\alpha}{n}+\frac{1}{q}-\frac{1}{p}}
\|u^{\frac{1}{q}}\|_{q,Q}\|\sigma^{\frac{1}{p'}}\|_{p',Q} < \infty. \]
\end{definition}

We can extend this definition to the case $p=1$ by using the
$L^\infty$ norm.  However, in this case it makes more sense to express
the endpoint weak type inequality in terms of pairs $(u,v)$ as
originally discussed in Section~\ref{section:testing}.  We will
consider these endpoint inequalities in
Section~\ref{section:separated}. 

\medskip

The two weight $A_{p,q}^\alpha$ characterizes weak type inequalities
for $M_\alpha$.  This result is well-known but a proof has never appeared in the
literature since it is very similar to
the proof of Theorem~\ref{thm:weak-max-one}; we also omit the details.
 For a generalization to non-homogeneous spaces whose proof
adapts well to dyadic grids, see
Garc\'\i a-Cuerva and Martell~\cite{garcia-cuerva-martell01}.

\begin{theorem} \label{thm:bump-max-weak}
Given $1<p\leq q<\infty$, $0<\alpha<n$, and a pair of weights
$(u,\sigma)$, the following are equivalent:
\begin{enumerate}

\item $(u,\sigma)\in A_{p,q}^\alpha$; 

\item for any $f\in L^p(\sigma)$,
\[ \sup_{t>0} t\, u(\{ x\in \R^n M_\alpha(f\sigma)(x)>t \}) ^{\frac{1}{q}} 
\leq C(n,\alpha)[u,\sigma]_{A_{p,q}^\alpha} 
\left(\int_\subRn |f(x)|^p\sigma(x)\,dx\right)^{\frac{1}{p}}.
\]
\end{enumerate}
\end{theorem}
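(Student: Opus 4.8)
The plan is to mimic the proof of Theorem~\ref{thm:weak-max-one} essentially verbatim, since passing from one weight to two weights introduces nothing new at the level of the weak type inequality: one simply carries the measures $\sigma\,dx$ and $u\,dx$ through the one-weight argument in place of Lebesgue measure.

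For the necessity of (1), I would fix a cube $Q$ and test (2) with $f=\chi_Q$ (if $\sigma$ is not known to be locally integrable one first tests with $\chi_{Q\cap\{\sigma\le k\}}$ and lets $k\to\infty$ so as to stay inside $L^p(\sigma)$). Since $M_\alpha(\sigma\chi_Q)(x)\ge |Q|^{\frac{\alpha}{n}}\avg{\sigma}_Q$ for every $x\in Q$, the cube $Q$ is contained in $\{M_\alpha(\sigma\chi_Q)>t\}$ for all $t<|Q|^{\frac{\alpha}{n}}\avg{\sigma}_Q$; plugging this into (2) and letting $t$ increase to $|Q|^{\frac{\alpha}{n}}\avg{\sigma}_Q$ yields
\[ |Q|^{\frac{\alpha}{n}}\avg{\sigma}_Q\, u(Q)^{\frac{1}{q}}\le C(n,\alpha)[u,\sigma]_{A_{p,q}^\alpha}\,\sigma(Q)^{\frac{1}{p}}. \]
Writing $u(Q)=|Q|\avg{u}_Q$ and $\sigma(Q)=|Q|\avg{\sigma}_Q$, using $\avg{\sigma}_Q^{1/p'}=\|\sigma^{1/p'}\|_{p',Q}$ and $\avg{u}_Q^{1/q}=\|u^{1/q}\|_{q,Q}$, and collecting the powers of $|Q|$, this is exactly the $A_{p,q}^\alpha$ bound on $Q$ with a constant independent of $Q$.

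For the sufficiency of (1), as in Theorem~\ref{thm:weak-max-one} I may assume $f\ge0$ is bounded with compact support — condition (1) forces $u$ and $\sigma$ to be locally integrable, so this reduction makes sense and guarantees the existence of the maximal cubes below — and by Proposition~\ref{prop:dyadic-max} it suffices to prove the weak type bound for $M_\alpha^\D$ for a fixed dyadic grid $\D$ and then sum over the finitely many grids $\D^t$. Fix $t>0$, let $\Q$ be the maximal disjoint cubes $Q\in\D$ with $|Q|^{\frac{\alpha}{n}}\avg{f\sigma}_Q>t$, so that $\{M_\alpha^\D(f\sigma)>t\}=\bigcup_{Q\in\Q}Q$. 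Then
\[ t^q\,u\big(\{M_\alpha^\D(f\sigma)>t\}\big)\le\sum_{Q\in\Q}\big(|Q|^{\frac{\alpha}{n}}\avg{f\sigma}_Q\big)^q u(Q). \]
Applying H\"older's inequality to $\int_Q f\sigma\,dx$ with the splitting $f\sigma=(f\sigma^{1/p})\,\sigma^{1/p'}$ bounds the $Q$-th summand by $|Q|^{(\frac{\alpha}{n}-1)q}\sigma(Q)^{q/p'}u(Q)\big(\int_Q f^p\sigma\,dx\big)^{q/p}$, and a short exponent computation shows that the $A_{p,q}^\alpha$ condition is precisely the statement that $|Q|^{(\frac{\alpha}{n}-1)q}\sigma(Q)^{q/p'}u(Q)\le[u,\sigma]_{A_{p,q}^\alpha}^q$. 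Hence
\[ t^q\,u\big(\{M_\alpha^\D(f\sigma)>t\}\big)\le[u,\sigma]_{A_{p,q}^\alpha}^q\sum_{Q\in\Q}\Big(\int_Q f^p\sigma\,dx\Big)^{q/p}\le[u,\sigma]_{A_{p,q}^\alpha}^q\Big(\int_\subRn f^p\sigma\,dx\Big)^{q/p}, \]
where the final step uses $q/p\ge1$ together with the disjointness of the cubes in $\Q$. Taking $q$-th roots, then the supremum over $t$, and summing over the grids $\D^t$ gives (2).

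There is no genuine obstacle here: the whole argument is the one-weight proof of Theorem~\ref{thm:weak-max-one} with $\sigma\,dx$ inserted on the ``input'' side and $u\,dx$ on the ``output'' side. The only points requiring a little care are the local-integrability/truncation reduction in the necessity direction and the bookkeeping of the exponent of $|Q|$ in the sufficiency direction — which is exactly why, as noted above, the details are customarily omitted.
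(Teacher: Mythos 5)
Your proof is correct and is precisely the argument the paper has in mind: the paper omits the proof of Theorem~\ref{thm:bump-max-weak} on the grounds that it is ``very similar to the proof of Theorem~\ref{thm:weak-max-one},'' and your adaptation—testing with $\chi_Q$ (with the truncation $\chi_{Q\cap\{\sigma\le k\}}$ for safety) for necessity, and for sufficiency reducing to $M_\alpha^{\D}$ via Proposition~\ref{prop:dyadic-max}, taking maximal dyadic cubes, applying H\"older with the splitting $f\sigma=(f\sigma^{1/p})\sigma^{1/p'}$, and using $q/p\ge 1$ with disjointness—is exactly that one-weight proof with $\sigma\,dx$ and $u\,dx$ carried through. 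The exponent bookkeeping checks out ($|Q|^{(\frac{\alpha}{n}-1)q}\sigma(Q)^{q/p'}u(Q)\le[u,\sigma]_{A_{p,q}^\alpha}^q$ is indeed equivalent to the $A_{p,q}^\alpha$ condition on $Q$), so there is nothing further to add.
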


\medskip

While the $A_{p,q}^\alpha$ condition characterizes the weak type inequality, it
is not sufficient for the strong type
inequality.  This fact has been part of the folklore of
the field, but a counter-example was not published until
recently~\cite{MR3065302}.   When $\alpha=0$, a counter-example for
the  Hardy-Littlewood maximal operator was constructed by Muckenhoupt and
Wheeden~\cite{muckenhoupt-wheeden76}.   However, this example does not
extend to the case $\alpha>0$ and our construction is substantially
different from theirs.  

\begin{example} \label{example:bad-wt}
Given $1<p\leq q < \infty$ and $0<\alpha<n$, 
there exists a pair of weights $(u,\sigma)\in A^\alpha_{p,q}$ and a
function $f\in L^p(\sigma)$ such that $M_\alpha(f\sigma)\not\in L^q(u)$. 
\end{example}

To construct Example~\ref{example:bad-wt} we will make use of the
technique of factored weights.   Factored weights are generalization
of the easier half of the Jones $A_p$ factorization theorem:
given $w_1,\,w_2\in A_1$, then for $1<p<\infty$, $w_1w_2^{1-p}\in
A_p$.  (See~\cite{duoandikoetxea01,garcia-cuerva-rubiodefrancia85};
in~\cite{MR2797562} this was dubbed {\em reverse factorization.})
Precursors of this idea have been well-known since the 1970s (cf. the
counter-example in~\cite{muckenhoupt-wheeden76}) but it was first
systematically developed (in the case $p=q$)
in~\cite[Chapter~6]{MR2797562}.   The following lemma was proved
in~\cite{MR3065302}.

\begin{lemma} \label{lemma:factored}
Given $0<\alpha<n$, suppose $1<p\leq q<\infty$ and
  $\frac{1}{p}-\frac{1}{q}\leq \frac{\alpha}{n}$ .  Let $w_1,\,w_2$ be
  locally integrable functions, and define
\[ u = w_1\big(M_\gamma w_2\big)^{-\frac{q}{p'}}, \qquad
\sigma = w_2\big(M_\gamma w_1\big)^{-\frac{p'}{q}}, \]
where
\[ \gamma = \frac{\frac{\alpha}{n}+\frac{1}{q}-\frac{1}{p}}
{\frac{1}{n}\left(1+\frac{1}{q}-\frac{1}{p}\right)}. \]
Then $(u,\sigma)\in A^\alpha_{p,q}$ and $[u,\sigma]_{A^\alpha_{p,q}}\leq 1$. 
\end{lemma}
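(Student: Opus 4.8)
The plan is to verify the $A^\alpha_{p,q}$ condition directly, by substituting into the averages defining $[u,\sigma]_{A^\alpha_{p,q}}$ the only pointwise information we have about the "inverse maximal function" weights, namely the trivial lower bound for a fractional maximal function on a single cube. Recall that $\|u^{1/q}\|_{q,Q}=(\avgint_Q u\,dx)^{1/q}$ and $\|\sigma^{1/p'}\|_{p',Q}=(\avgint_Q\sigma\,dx)^{1/p'}$, so the claim is the uniform bound
\[ |Q|^{\frac{\alpha}{n}+\frac1q-\frac1p}\left(\avgint_Q u\,dx\right)^{\frac1q}\left(\avgint_Q\sigma\,dx\right)^{\frac1{p'}}\le 1. \]
First I would record that for every cube $Q$ and every $x\in Q$, taking $Q$ itself as a competitor in the supremum defining $M_\gamma$ gives
\[ M_\gamma w_2(x)\ge |Q|^{\frac\gamma n}\avgint_Q w_2\,dy = |Q|^{\frac\gamma n-1}w_2(Q), \]
and symmetrically for $w_1$. (Here one should first dispose of the degenerate situation by assuming, as is implicit in the statement, that $M_\gamma w_1$ and $M_\gamma w_2$ are finite a.e., so that $u$ and $\sigma$ are genuine weights; the estimates below show in passing that $u,\sigma\in L^1_{loc}$.)

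Next I would feed these bounds into the averages. Since $-q/p'<0$ and $-p'/q<0$, the pointwise lower bounds give
\[ \avgint_Q u\,dx=\avgint_Q w_1\,(M_\gamma w_2)^{-q/p'}\,dx\le \bigl(|Q|^{\frac\gamma n-1}w_2(Q)\bigr)^{-q/p'}\frac{w_1(Q)}{|Q|}, \]
\[ \avgint_Q \sigma\,dx\le \bigl(|Q|^{\frac\gamma n-1}w_1(Q)\bigr)^{-p'/q}\frac{w_2(Q)}{|Q|}. \]
Raising the first to the power $1/q$, the second to the power $1/p'$, and multiplying, the factors $w_1(Q)$ and $w_2(Q)$ cancel \emph{exactly} (the powers are $-\tfrac1q$ and $+\tfrac1q$ for $w_1$, and $-\tfrac1{p'}$ and $+\tfrac1{p'}$ for $w_2$), so the product collapses to a pure power of $|Q|$:
\[ \left(\avgint_Q u\,dx\right)^{\frac1q}\left(\avgint_Q\sigma\,dx\right)^{\frac1{p'}}\le |Q|^{-\frac\gamma n\left(\frac1{p'}+\frac1q\right)}. \]

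Finally I would check that the exponents match. Using $\frac1{p'}+\frac1q=1+\frac1q-\frac1p$ together with the definition of $\gamma$, one gets $\frac\gamma n\bigl(\frac1{p'}+\frac1q\bigr)=\frac\alpha n+\frac1q-\frac1p$, so multiplying the last display by the prefactor $|Q|^{\frac\alpha n+\frac1q-\frac1p}$ yields $|Q|^0=1$, uniformly in $Q$; hence $[u,\sigma]_{A^\alpha_{p,q}}\le 1$. There is no genuinely hard step here: the argument is essentially a matching of exponents, and the only substantive input is the observation that $M_\gamma w\ge |Q|^{\frac\gamma n-1}w(Q)$ on $Q$ — which is precisely what the inverse-maximal-function construction is designed to exploit. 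The hypothesis $\frac1p-\frac1q\le\frac\alpha n$ is used only to ensure $\gamma\ge 0$, while $\gamma<n$ follows from $\alpha<n$, so that $M_\gamma$ is a legitimate (fractional) maximal operator throughout.
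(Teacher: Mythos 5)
Your proof is correct and is essentially the paper's own argument: both use the trivial bound $M_\gamma w(x)\ge |Q|^{\gamma/n}\avgint_Q w\,dy$ for $x\in Q$ to bound the negative powers, observe the exact cancellation of the $w_1$ and $w_2$ averages, and match the remaining power of $|Q|$ via the definition of $\gamma$. No gaps; the side remarks on $0\le\gamma\le\alpha<n$ also agree with the paper.
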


\begin{proof}
By our assumptions on $p$, $q$ and $\alpha$, $0\leq \gamma \leq \alpha$.  
Fix a cube $Q$.  Then 
\begin{align*}
& |Q|^{\frac{\alpha}{n}+\frac{1}{q}-\frac{1}{p}}
\left(\avgint_Q w_1(x)(M_\gamma w_2(x)^{-\frac{q}{p'}}\,dx\right)^\frac{1}{q}
\left(\avgint w_2(x)\big(M_\gamma w_1(x)^{-\frac{p'}{q}} \,dx\right)^\frac{1}{p'} \\
& \qquad \qquad\leq  |Q|^{\frac{\alpha}{n}+\frac{1}{q}-\frac{1}{p}}
\left(\avgint w_1(x)\,dx\right)^\frac{1}{q'}
\left(|Q|^\frac{\gamma}{n} \left(\avgint_Q w_2(x)\,dx\right)\right)^{-\frac{1}{p'}} \\
& \qquad \qquad\qquad \qquad \times 
\left(\avgint w_2(x)\,dx\right)^\frac{1}{p'}
\left(|Q|^\frac{\gamma}{n} \left(\avgint_Q w_1(x)\,dx\right)\right)^{-\frac{1}{q}} \\
& \qquad \qquad=  |Q|^{\frac{\alpha}{n}+\frac{1}{q}-\frac{1}{p}-\frac{\gamma}{n}\left(1+\frac{1}{q}-\frac{1}{p}\right)} \\
&\qquad \qquad = 1. 
\end{align*}
\end{proof}

\begin{proof}[Construction of Example~\textup{\ref{example:bad-wt}}]
To construct the desired example, we need to consider two cases.  In
both cases we will work on the real line, so $n=1$.

Suppose first that $\frac{1}{p}-\frac{1}{q}>\alpha$.
Let $f=\sigma=\chi_{[-2,-1]}$ and let
$u=x^t\chi_{[0,\infty)}$, where $t=q(1-\alpha)-1$.  Given any
$Q=(a,b)$, $Q\cap \supp(u)\cap \supp(\sigma)=\emptyset$ unless $a<-1$ and $b>0$.  In
this case we have that
\begin{multline*}
|Q|^{\alpha +\frac{1}{q}-\frac{1}{p}}\|u
^{\frac1q}\|_{q,Q}\|\sigma^{\frac{1}{p'}}\|_{p',Q} \\
 \leq b^{\alpha+\frac1q-\frac1p}
\left(\frac1b\int_0^b x^t\,dx\right)^{\frac1q}
\left(\frac1b \int_{-2}^{-1} \,dx\right)^{\frac{1}{p'}} 
 \lesssim b^{\alpha + \frac{t+1}{q}-1} 
 = 1.
\end{multline*}
Hence, $(u,\sigma)\in A_{p,q}^\alpha$. 
On the other hand, for all $x>1$, 
\[ M_\alpha(f\sigma)(x) \approx x^{\alpha-1}, \]
and so
\[ \int_{\R} M_\alpha(f\sigma)(x)^qu(x)\,dx 
\gtrsim  \int_1^\infty x^{q(\alpha-1)}x^{q(1-\alpha)-1}\,dx
= \int_1^\infty \frac{dx}{x} = \infty. \]

\bigskip

Now suppose $\frac{1}{p}-\frac{1}{q}\leq \alpha$.  Fix $\gamma$ as in
Lemma~\ref{lemma:factored}.  We first construct a set $E\subset [0,\infty)$ such that
$M_\gamma(\chi_E)(x)\approx 1$ for $x>0$.  Let
\[ E = \bigcup_{j\geq 0} [j,j + (j+1)^{-\gamma}).  \]
Suppose $x\in [k,k+1)$; if  $k=0$, then it is immediate that if we take $Q=[0,2]$, then 
$M_\gamma(\chi_E) \geq 3\cdot 2^{\gamma-2}\approx 1$.  If 
$k\geq 1$, let $Q=[0,x]$; then 
\[ M_\gamma(\chi_E)(x) \geq 
x^{\gamma-1}\sum_{0\leq j \leq \lfloor x \rfloor} (j+1)^{-\gamma}
\geq (k+1)^{\gamma-1} \sum_{j=0}^{k} (j+1)^{-\gamma} \approx (k+1)^{\gamma-1} (k+1)^{1-\gamma} = 1. \]

\medskip

To prove the reverse inequality we will show that 
$|Q|^{\gamma-1}|Q\cap E|\lesssim 1$ for every cube $Q$.    If $|Q|\leq 1$, then 
\[ |Q|^{\gamma-1}|Q\cap E| \leq |Q|^{\gamma} \leq 1, \]
so we only have to consider $Q$ such that $|Q|\geq 1$.    In this
case, given  $Q$ let $Q'$ be the smallest interval whose endpoints
are integers that contains $Q$.  Then $|Q'|\leq |Q|+2 \leq 3|Q|$, and
so  $|Q|^{\gamma-1}|E\cap Q|\approx |Q'|^{\gamma-1}|E\cap Q'|$.
Therefore, without loss of generality, we may assume that
$Q=[a,a+h+1]$, where $a,\,h$ are non-negative integers. Then
\begin{multline*}
 |Q|^{\gamma-1}|Q\cap E| = (1+h)^{\gamma-1} \sum_{a\leq j \leq a+h} (j+1)^{-\gamma}
\approx (1+h)^{\gamma-1} \int_a^{a+h} (t+1)^{-\gamma}\,dt \\
\approx (1+h)^{\gamma-1}\big( (a+h+1)^{1-\gamma}-(a+1)^{1-\gamma}\big).
\end{multline*}
To estimate the last term suppose first that $h \leq a$.  Then by the mean value theorem the last term is dominated by 
\[  (1+h)^{\gamma-1}(1+h)(a+1)^{-\gamma}  \leq 1. \]
On the other hand, if $h>a$, then the last term is dominated by 
\[ (1+h)^{\gamma-1}(a+h+1)^{1-\gamma} \leq 2^{1-\gamma} \approx 1. \]
This completes the proof that $M_\gamma(\chi_E)(x)\approx 1$. 

\bigskip

We can now give our desired counter example.   Let $w_1=\chi_E$ and $w_2=\chi_{[0,1]}$.   Then for all $x\geq 2$, 
\[ M_\gamma w_1(x) \approx 1, \qquad 
M_\gamma w_2(x) =\sup_{Q} |Q|^{\gamma-1}\int_Q w_2(y)\,dy \approx x^{\gamma-1}. \]
Define
\[ u = w_1 (M_\gamma w_2)^{-\frac{q}{p'}}, \qquad
\sigma = w_2 (M_\gamma w_1)^{-\frac{p'}{q}}; \]
then by Lemma~\ref{lemma:factored}, $(u,\sigma)\in A^\alpha_{p,q}$.  Moreover, for $x\geq 2$, we have that 
\[ u(x) \approx x^{(1-\gamma)\frac{q}{p'}}\chi_E(x), \qquad  \sigma(x) \approx \chi_{[0,1]}(x). \]

Fix $f\in L^p(\sigma)$: without loss of generality, we may assume $\supp(f)\subset [0,1]$.  Then $f\sigma$ is locally integrable, and for $x\geq 2$ we have that
\[ M_\alpha (f\sigma)(x) \geq x^{\alpha -1} \|f\sigma\|_1 \approx x^{\alpha-1}. \]
Therefore, for $x\geq 2$, 
\[ M_\alpha(f\sigma)(x)^q u(x) \gtrsim x^{(\alpha-1)q}x^{(1-\gamma) \frac{q}{p'}}\chi_E(x). \]
By the definition of $\gamma$, 
\[ \gamma\left(\frac{1}{q}+\frac{1}{p'}\right) = 
\gamma\left(1+\frac{1}{q}-\frac{1}{p}\right) = \alpha+\frac{1}{q}-\frac{1}{p}
= \alpha-1 + \frac{1}{q}+\frac{1}{p'}; \]
equivalently,
\[ (\gamma-1)\left(\frac{q}{p'}+1\right) = q(\alpha-1),\]
and so
\[ (\alpha-1)q+(1-\gamma)\frac{q}{p'} = \gamma-1. \]

Therefore, to show that $M_\alpha(f\sigma)\not\in L^q(u)$, it will be
enough to prove that
\[ \int_2^\infty x^{\gamma-1}\chi_E(x)\,dx =\infty, \]
but this is straightforward:
\begin{multline*}
\int_2^\infty x^{\gamma-1}\chi_E(x)\,dx
 = \sum_{j=2}^\infty \int_j^{j+(j+1)^{-\gamma}} x^{\gamma-1}\,dx 
 \geq \sum_{j=2}^\infty (j+(j+1)^{-\gamma})^{\gamma-1} (j+1)^{-\gamma} \\
 \geq \sum_{j=2}^\infty (j+1)^{\gamma-1} (j+1)^{-\gamma} 
 \geq \sum_{j=2}^\infty  (j+1)^{-1} 
 = \infty. 
\end{multline*}
\end{proof}

\medskip

If we combine Example~\ref{example:bad-wt} with the pointwise inequalities in
Section~\ref{section:dyadic}, we see that the $A_{p,q}^\alpha$ condition is
also not sufficient for
the fractional integral operator to satisfy the strong type
inequality.    This condition is also not sufficient for the weak
$(p,q)$ inequality.    A counter-example when $p=q=n=2$ and
$\alpha=\frac{1}{2}$ using measures was constructed by Kerman and
Sawyer~\cite{MR867921}.   Here we construct a general counter-example
that holds for all $p$, $q$ and $\alpha$.  For simplicity we construct
the example for $n=1$, but it can be modified to work in all
dimensions.  We want to thank E.~Sawyer for useful comments on an
earlier version of this construction.

\begin{example} \label{example:bad-wt-frac}
Let $n=1$.  Given $1<p\leq q < \infty$ and $0<\alpha<1$, 
there exists a pair of weights $(u,\sigma)\in A^\alpha_{p,q}$ and a
non-negative function $f\in L^p(\sigma)$ such that 
\[ \sup_{t>0} t\, u(\{ x\in \R :  I_\alpha(f\sigma)(x)>t \})^{\frac{1}{q}}  =
\infty. \]
\end{example}

\begin{proof}
  Fix $p$, $q$ and $\alpha$ and let $u=\chi_{[-1,1]}$.  We will first
  construct a non-negative weight $\sigma$ such that
  $[u,\sigma]_{A_{p,q}^\alpha}<\infty$.  We will then
  find a non-negative function $f\in L^p(\sigma)$ such that
  $I_\alpha(f\sigma)(x)=\infty$ for all $x\in (0,1)$. 
  Then we have that 
\[ \sup_{t>0} t\, u(\{ x\in \R^n : I_\alpha (f\sigma)(x)>t \})^{\frac{1}{q}}
\geq  \sup_{t>0} t\, u([0,1])^{\frac{1}{q}} = \infty. \]

\medskip

Let $\sigma = |x|^{-r}\chi_{\{|x|>1\}}$, where $r$ is defined by
\[ \alpha - \frac{1}{p} = \frac{r}{p'}. \]
Given that $u$ and $\sigma$ are symmetric around the origin and have
disjoint supports, it is immediate that  to check the $A_{p,q}^\alpha$
condition it suffices to check it on intervals $Q=[0,t]$, $t>1$.  But
in this case,
\[ |Q|^{\alpha+\frac{1}{q}-\frac{1}{p}}
\left(\avgint_Q u(x)\,dx\right)^{\frac{1}{q}}\left(\avgint_Q \sigma(x)\,dx\right)^{\frac{1}{p'}}
= t^{\alpha+\frac{1}{q}-\frac{1}{p}} t^{-\frac{1}{q}} 
\left(\frac{1}{t}\int_1^t x^{-r}\,dx\right)^{\frac{1}{p'}}. \]
If $r<1$ then $x^{-r}$ is locally integrable at the origin, and so by
our choice of $r$, the right hand term is bounded by 
\[  t^{\alpha+\frac{1}{q}-\frac{1}{p}} t^{-\frac{1}{q}}
\left(\frac{1}{t}\int_0^t x^{-r}\,dx\right)^{\frac{1}{p'}} 
\approx  t^{\alpha+\frac{1}{q}-\frac{1}{p}-\frac{1}{q}-\frac{r}{p'}} =1. \]
On the other hand, if $r>1$, then $x^{-r} \in L^1(\R)$, and so the
right hand side is bounded by
\[ t^{\alpha+\frac{1}{q}-\frac{1}{p}-\frac{1}{q}-\frac{1}{p'}} = t^{\alpha-1} \leq 1. \]
Hence, $[u,\sigma]_{A_{p,q}^\alpha}<\infty$.

\medskip

We now construct $f$ with the desired properties.  Let
\[ f(x) = \frac{x^{r-\alpha}}{\log(ex)} \chi_{(1,\infty)}(x);  \]
then
\[ f(x)^p\sigma(x) = \frac{x^{(r-\alpha)p-r}}{\log(ex)^p} \chi_{(1,\infty)}(x). \]
By our definition of $r$,
\[ \alpha - \frac{1}{p} = r\left(1-\frac{1}{p}\right), \]
or equivalently,
\[ \frac{r}{p} - \frac{1}{p} = r - \alpha, \]
which in turn implies that $p(r-\alpha)=r-1$.  Hence, since $p>1$,
\[ f(x)^p\sigma(x) = \frac{1}{x\log(ex)^p} \chi_{(1,\infty)}(x) \in L^1(\R). \]

On the other hand, for $x\in (0,1)$, 
\begin{multline*}
 I_\alpha(f\sigma)(x) = \int_1^\infty
 \frac{f(y)\sigma(y)}{|x-y|^{1-\alpha}}\,dy \\
= \int_1^\infty \frac{dy}{y^{\alpha}(y-x)^{1-\alpha}\log(ey)} 
\geq \int_1^\infty \frac{dy}{y\log(ey)}   = +\infty.  
\end{multline*}
This completes the proof.
\end{proof}

Though it does not matter for our proof, we note in passing that in
this example we actually have that $I_\alpha(f\sigma)(x)=\infty$
for all $x$.

\subsection*{Young functions and Orlicz norms}

Given the failure of the $A_{p,q}^\alpha$ condition to be sufficient
for strong type norm inequalities for fractional maximal and integral
operators, our goal is to generalize this condition to get one that
is sufficient, resembles the $A_{p,q}^\alpha$ condition and
shares its key properties.  In particular, the condition should be
``geometric'' in the sense that, unlike the testing conditions in
Section~\ref{section:testing}, it does not involve the operator
itself, and it should interact well with dyadic grids.  Our approach
will be to replace the $L^q$ and $L^{p'}$ norms in the definition with
larger norms.  For $A_p$ weights this was first done by Neugebauer,
who replaced the $L^p$ and $L^{p'}$ norms with $L^{rp}$ and $L^{rp'}$
norms, $r>1$.  P\'erez~\cite{MR1291534,MR1327936} greatly extended
this idea by showing that Orlicz norms that lie between $L^p$ and
$L^{rp}$ for any $r>1$ will also work.

To formulate his approach we first need to introduce some basic ideas about
Young functions and Orlicz norms.  For complete information
see~\cite{krasnoselskii-rutickii61,rao-ren91}. 
A function $B : [0,\infty)\rightarrow[0,\infty)$ is a Young function
if it is continuous, convex and strictly increasing, if $B(0)=0$, and if
$B(t)/t\rightarrow\infty$ as $t\rightarrow\infty$.  $B(t)=t$ is not
properly a Young function, but in many instances what we say applies
to this function as well.  It is convenient, particularly when
computing constants, to assume $B(1)=1$, but this normalization is not
necessary.   A Young function $B$ is said to be doubling if there exists a positive
constant $C$ such that $B(2t)\leq CB(t)$ for all $t>0$.

Given a Young function $B$ and  a cube $Q$, we define the normalized Luxemburg norm of $f$ 
on $Q$  by
\begin{equation}
\|f\|_{B,Q} = \inf\left\{ \lambda > 0 :
\avgint_Q B\left(\frac{|f(x)|}{\lambda}\right)\,dx \leq 1
\right\}.
\label{lux-norm}
\end{equation}
When $B(t)=t^p$, $1\leq p<\infty$, the Luxemburg norm coincides with the normalized $L^p$ norm:
\[ \|f\|_{B,Q} = \left(\avgint_Q |f(x)|^p\,dx\right)^{1/p}
= \|f\|_{p,Q}. \]

If $A(t) \leq B(ct)$ for all $t\geq t_0>0$,  then there exists a constant $C$,
depending only on $A$ and $B$, such that for all cubes $Q$ and functions
$f$, $\|f\|_{A,Q} \leq C\|f\|_{B,Q}$.

Given a Young function $B$, the associate Young
function $\bar{B}$ is defined by
\[ \bar{B}(t) = \sup_{s>0} \{ st - B(s) \},\qquad t>0; \]
$B$ and $\bar{B}$ satisfy 
\[ t\leq B^{-1}(t)\bar{B}^{-1}(t) \leq 2t. \]
Note that the associate of $\bar{B}$ is again $B$.  Using the
associate Young function,
H\"older's inequality can be generalized to the scale of Orlicz
spaces:  given any Young function $B$, then for all functions $f$ and $g$ and all
cubes $Q$,
\[ 
 \avgint_Q |f(x)g(x)|\,dx \leq 2\|f\|_{B,Q}\|g\|_{\bar{B},Q}.
\]
More generally, if $A$, $B$ and $C$ are Young functions such that
for all $t\geq t_0>0$,
\[ B^{-1}(t)C^{-1}(t) \leq  cA^{-1}(t), \]
then
\[  \|fg\|_{A,Q} \leq K \|f\|_{B,Q}\|g\|_{C,Q}. \]

Below we will need to impose a growth condition on Young functions
that compares them to powers of $t$.  This condition was first
introduced by P\'erez~\cite{MR1327936}.  Given $1<p<\infty$, we say that
a Young function $B$ satisfies the $B_p$ condition if
\[ \int_1^\infty \frac{B(t)}{t^p}\frac{dt}{t} < \infty. \]

Frequently, we will want to make an assumption of the form $\bar{B}
\in B_p$.  If both $B$ and $\bar{B}$ are doubling, then this is
equivalent to 
\begin{equation} \label{eqn:alt-Bp}
  \int_1^\infty
 \left(\frac{t^{p'}}{B(t)}\right)^{p-1}\frac{dt}{t} < \infty.
\end{equation}
(See~\cite[Proposition~5.10]{MR2797562}.)
There are two important examples of functions that satisfy the $B_p$ condition.
If $B(t)=t^{rp'}$, $r>1$,  or if
$B(t)=t^p\log(e+t)^{p-1+\delta}$, $\delta>0$, 
then $\bar{B} \in B_p$.  For reasons that will be clear below, we will
refer to these as power bumps and log bumps.   One essential property
of this condition is that if $\bar{B}\in B_p$, then $\bar{B}\lesssim
t^p$ and $B \gtrsim t^{p'}$.  Note in particular that if
$B(t)=t^{p'}$, then $\bar{B}(t)=t^p$ is not in $B_p$.  

The $B_p$ condition was introduced by P\'erez to characterize the
boundedness of the Orlicz maximal operator.  Given a Young function
$B$ and a measurable function $f$, define
\[ M_B f(x) = \sup_Q \|f\|_{B,Q}\,\chi_Q(x). \]

\begin{prop} \label{prop:perez-Bp}
Given a Young function $B$ and $1<p<\infty$, the following are
equivalent:
\begin{enumerate}
\item $B\in B_p$;

\item for all $f\in L^p$,
\[ \left(\int_\subRn M_Bf(x)^p\,dx\right)^{\frac{1}{p}}
\leq C(n,p)\left(\int_\subRn |f(x)|^p\,dx\right)^{\frac{1}{p}}. \]
\end{enumerate}
\end{prop}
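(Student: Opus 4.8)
The plan is to prove the two implications separately; the substantive work is in $(1)\Rightarrow(2)$, and $(2)\Rightarrow(1)$ is a short testing argument.

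For $(1)\Rightarrow(2)$ I would first reduce to a single dyadic grid. Since $\|f\|_{B,Q}$ depends only on $f\chi_Q$, since $B$ is convex, and since dilating $B$ only rescales $M_B$ (and preserves the $B_p$ condition), Theorem~\ref{thm:allcubes} gives a pointwise bound $M_Bf(x)\le C(n)\sup_t M_B^{\D^t}f(x)$ over the finitely many grids of~\eqref{eqn:grids}; normalizing so that $B(1)=1$ is then harmless. As there are finitely many grids, and by monotone convergence, it suffices to prove $\|M_B^\D f\|_p\le C\|f\|_p$ for one fixed grid $\D$ and for $f\ge 0$ bounded with compact support. For such $f$, $M_B^\D f$ is bounded and positive, so with $\Omega_k=\{M_B^\D f>2^k\}$ one has the standard level-set bound
\[ \int_{\R^n}(M_B^\D f)^p\,dx\le 2^p\sum_k 2^{kp}|\Omega_k|. \]
Write $\Omega_k=\bigcup_j Q_j^k$ with $\{Q_j^k\}_j$ the maximal dyadic cubes satisfying $\|f\|_{B,Q_j^k}>2^k$; these are disjoint in $j$. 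The crucial point is that the naive estimate $|Q_j^k|<\int_{Q_j^k}B(|f|/2^k)$ summed over all $k$ diverges, so one must first truncate: split $f=f\chi_{\{|f|>2^{k-1}\}}+f\chi_{\{|f|\le 2^{k-1}\}}$ on $Q_j^k$, note the $B$-norm of the second piece on $Q_j^k$ is at most $2^{k-1}$, and conclude from the triangle inequality that $\|f\chi_{\{|f|>2^{k-1}\}}\|_{B,Q_j^k}>2^{k-1}$, i.e.
\[ |Q_j^k|<\int_{Q_j^k\cap\{|f|>2^{k-1}\}}B\!\left(\frac{|f(y)|}{2^{k-1}}\right)dy. \]

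Summing this over $j$ (disjointness), then over $k$, and interchanging the sum with the integral, I would get
\[ \int_{\R^n}(M_B^\D f)^p\,dx\le 2^p\int_{\R^n}\Big(\sum_{k\,:\,2^{k-1}<|f(y)|}2^{kp}B\big(|f(y)|\,2^{1-k}\big)\Big)\,dy. \]
For fixed $y$ with $s=|f(y)|$, the substitution $t=s\,2^{1-k}$ turns the inner sum into $C\,s^p\sum_m B(t_m)/t_m^p$, where $\{t_m\}$ is a geometric progression of ratio $2$ lying in $[1,\infty)$ — this is exactly where the truncation pays off, since it forces $t_m\ge 1$ and thus avoids the divergent $t\to 0^+$ tail. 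Because $B$ is increasing, $B(t_m)/t_m^p\le C_p\int_{t_m}^{t_{m+1}}B(t)t^{-p}\,dt/t$, so the whole sum is $\le C_p\int_1^\infty B(t)t^{-p}\,dt/t$, which is finite precisely by the hypothesis $B\in B_p$. Collecting constants gives $\int(M_B^\D f)^p\le C(n,p,B)\int|f|^p$.

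For $(2)\Rightarrow(1)$ I would test the inequality on $f=\chi_{Q_0}$ with $Q_0=[0,1)^n$. For $|x|\ge R$ (with $R$ to be chosen), the smallest cube $R_x$ containing both $Q_0$ and $x$ has $|R_x|\approx|x|^n$, and a one-line computation gives $\|\chi_{Q_0}\|_{B,R_x}=1/B^{-1}(|R_x|)$, hence $M_B(\chi_{Q_0})(x)\ge c/B^{-1}(c_n|x|^n)$. Inserting this into $\|M_B(\chi_{Q_0})\|_p^p\le C_0^p|Q_0|$, passing to polar coordinates and substituting $u=c_n|x|^n$ and then $u=B(t)$, I arrive at $\int_{t_1}^\infty B'(t)\,t^{-p}\,dt<\infty$ with $t_1=B^{-1}(c_nR^n)$; choosing $R$ large enough that $t_1\ge 1$ and integrating by parts (legitimate since a convex $B$ is locally absolutely continuous) converts this into $\int_1^\infty B(t)\,t^{-p}\,dt/t<\infty$, i.e. $B\in B_p$.

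The main obstacle is the sufficiency direction, and within it the easily-overlooked fact that summing $|Q_j^k|\lesssim\int_{Q_j^k}B(|f|/2^k)$ over all $k$ gives a divergent series; the fix — truncating $f$ at height $\sim 2^{k}$ on each stopping cube before summing — is what confines the resulting geometric progression to $[1,\infty)$, so that the single quantitative input $\int_1^\infty B(t)t^{-p}\,dt/t<\infty$ controls everything. The remaining ingredients (reduction to one dyadic grid, the level-set splitting $\int(M_B^\D f)^p\approx\sum_k 2^{kp}|\Omega_k|$, and the sum-to-integral comparison) are routine once this is recognized.
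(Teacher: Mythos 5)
Your proposal is correct in both directions, but note that the paper itself contains no proof of Proposition~\ref{prop:perez-Bp}: it quotes the result from P\'erez~\cite{MR1327936} and only comments on the role of the doubling hypothesis, so there is no in-text argument to compare against. Your sufficiency proof is the standard one in the literature (stopping cubes for the level sets of $M_B^{\D}$ plus truncation of $f$ at height $2^{k-1}$), and the two reductions you invoke deserve one explicit line each: the pointwise bound $M_Bf\le C(n)\sup_t M_B^{\D^t}f$ uses convexity of $B$ to absorb the volume ratio into the Luxemburg norm, i.e.\ $\|f\|_{B,Q}\le 3^n\|f\|_{B,P}$ whenever $Q\subset P$ and $|P|\le 3^n|Q|$; and the existence of the maximal cubes $Q_j^k$ follows from $\|f\|_{B,Q}\le \|f\|_\infty/B^{-1}\big(|Q|/|\supp f|\big)\to 0$ as $|Q|\to\infty$ for bounded, compactly supported $f$. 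Neither is a gap, just bookkeeping, and your identification of the truncation step as the crux is exactly right: it is what keeps the geometric progression in $(1,\infty)$ so that the single hypothesis $\int_1^\infty B(t)t^{-p}\,dt/t<\infty$ closes the estimate (the resulting constant of course depends on the value of this integral as well as on $n$ and $p$, which is what the statement's $C(n,p)$ tacitly means). Your necessity argument is also sound, and it has a feature worth emphasizing in light of the paper's discussion: testing on $\chi_{Q_0}$ gives $\int^\infty B^{-1}(u)^{-p}\,du<\infty$, and passing from this to $B\in B_p$ via the substitution $u=B(t)$ (equivalently, decomposing dyadically in the range of $B$, where $B(s_{j+1})=2B(s_j)$ holds automatically for $s_j=B^{-1}(2^j)$) never uses that $B$ is doubling; this is consistent with, and gives a short proof of, the necessity-without-doubling statement the paper attributes to Liu and Luque~\cite{MR3256181}. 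The integration by parts is legitimate since a convex Young function is locally absolutely continuous, as you say; if you prefer to avoid $B'$ altogether, the dyadic-in-the-range decomposition just described does the same job with only monotonicity of $B$.
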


As given in~\cite{MR1327936}, Proposition~\ref{prop:perez-Bp} included
the assumption that $B$ was doubling.  However, this assumption was
only included to use the $B_p$ condition in the form
of~\eqref{eqn:alt-Bp} to prove sufficiency.  This was correctly noted
in~\cite{MR2797562}, but we made the incorrect assertion that it was
not needed for the proof of necessity in~\cite{MR1327936}.  However, Liu and
Luque~\cite{MR3256181} recently gave a proof that it is necessary without
assuming doubling.  

We can also define a fractional Orlicz maximal
operator $M_{B,\alpha}$:  see Section~\ref{section:separated}
below.  There is also a corresponding $B_{p}^\alpha$ condition which
is useful in determining sharp
constants estimates for the fractional integral operator:
see~\cite{MR3065302} for details.

\subsection*{The $A_{p,q}^\alpha$ bump conditions}

Using the machinery introduced above, we can now state our
generalizations of the $A_{p,q}^\alpha$ condition.   Given
$0<\alpha<n$, $1<p\leq q<\infty$, Young functions $A$ and $B$,
$\bar{A}\in B_{q'}$ and $\bar{B} \in B_p$, and a pair of weights
$(u,\sigma)$, we define 
\begin{gather*} 
[u,\sigma]_{A_{p,q,B}^\alpha} 
= \sup_Q |Q|^{\frac{\alpha}{n}+\frac{1}{q}-\frac{1}{p}}
\|u^\frac{1}{q}\|_{q,Q}\|\sigma^{\frac{1}{p'}}\|_{B,Q} < \infty, \\
[u,\sigma]_{A_{p,q,A}^\alpha}^* 
= \sup_Q |Q|^{\frac{\alpha}{n}+\frac{1}{q}-\frac{1}{p}}
\|u^\frac{1}{q}\|_{A,Q}\|\sigma^{\frac{1}{p'}}\|_{p',Q} < \infty. 
\end{gather*}
By our hypotheses on $A$ and $B$ both of these quantities
are larger than $[u,\sigma]_{A_{p,q}^\alpha}$:  we have ``bumped up''
one of the norms in the scale of Orlicz spaces.  For this reason we
refer to these as $A_{p,q}^\alpha$ bump conditions.  

Note that the second
condition is the ``dual'' of the first, in the sense that
\[ [u,\sigma]_{A_{p,q,A}^\alpha}^* =
[\sigma,u]_{A_{q',p',A}^\alpha}. \]
As we will see below, this condition will play a role analogous to
that of the dual testing conditions discussed in
Section~\ref{section:testing}.    Informally, it is common to refer to
the $[u,\sigma]_{A_{p,q,B}^\alpha}$ condition as having a bump on
the right, and the $[u,\sigma]_{A_{p,q,A}^\alpha}^*$ as having a bump
on the left, and collectively we refer to these as separated bump
conditions. 

We can combine these conditions by putting a bump on both norms
simultaneously:
\[ [u,\sigma]_{A_{p,q,A,B}^\alpha} 
= \sup_Q |Q|^{\frac{\alpha}{n}+\frac{1}{q}-\frac{1}{p}}
\|u^\frac{1}{q}\|_{A,Q}\|\sigma^{\frac{1}{p'}}\|_{B,Q} < \infty. \]
We refer this as a conjoined bump condition.  Clearly, it is larger
than either of the separated bump conditions.  In fact, assuming the
conjoined bump condition is stronger than assuming both separated bump
conditions.    The following example (with $\alpha=0$, $p=2$) was constructed
in~\cite{anderson-DCU-moen}.

\begin{example} \label{example:sep-weaker}
Given
$0\leq\alpha<n$ and $1<p\leq q<\infty$, there exists a pair of Young functions $A$ and $B$,
$\bar{A}\in B_{q'}$ and $\bar{B} \in B_p$, and a pair of weights
$(u,\sigma)$, such that $[u,\sigma]_{A_{p,q,B}^\alpha}, \,
[u,\sigma]_{A_{p,q,A}^\alpha}^*<\infty$, but
$[u,\sigma]_{A_{p,q,A,B}^\alpha} =\infty$.
\end{example}

\begin{proof}
We construct our example on the real line, so $0<\alpha<1$.  Define the Young functions
\[ A(t)=  t^q \log(e+t)^q, \quad B(t)= t^{p'} \log(e+t)^{p'}.  \]
Then $\bar{A} \in B_{q'}$ and $\bar{B} \in B_p$. By rescaling, if we let
$\Psi(t)=t\log(e+t)^{q}, \, \Phi(t)=t\log(e+t)^{p'}$, then for any pair $(u,\sigma)$, 
\[ \|u^{\frac{1}{q}}\|_{A,Q} \approx \|u\|_{\Psi,Q}^{\frac{1}{q}}, \qquad
\|\sigma^{\frac{1}{p'}}\|_{B,Q} \approx \|\sigma\|_{\Phi,Q}^{\frac{1}{p'}}. \]
Therefore, it will suffice  to estimate the norms of $u$ and $\sigma$ with respect
to $\Psi$ and $\Phi$.   Similarly, we can replace the localized $L^q$
and $L^{p'}$ norms of
$u^{\frac{1}{q}}$ and $\sigma^{\frac{1}{p'}}$ with the $L^1$ norms of $u$ and $\sigma$.

Before we define $u$ and $\sigma$ we first construct a pair
$(u_0,\sigma_0)$ which will be the basic building block for our
example.  Fix an integer $k\geq 2$ and  define
$Q=(0,k)$, $\sigma_0=\chi_{(0,1)}$ and   $u_0 = K_k^q\chi_{(k-1,k)}$,
where $K_k=k^{1-\alpha}\log(e+k)^{-\frac{3}{2}}$.
Since $\Psi^{-1}(t)\approx t\log(e+t)^{-q}$, $\Phi^{-1}(t)\approx
t\log(e+t)^{-p'}$, by 
the definition of the Luxemburg norm,
\[ \|u_0\|_{1,Q}^{\frac{1}{q}} = \frac{K_k}{k^{\frac{1}{q}}}, \;
\|u_0\|_{\Psi,Q} \approx \frac{K_k\log(e+k)}{k^{\frac{1}{q}}},  \qquad 
\|\sigma_0\|_{1,Q}^{\frac{1}{p'}} = \frac{1}{k^{\frac{1}{p'}}}, \; 
\|\sigma_0\|_{\Phi,Q} \approx\frac{\log(e+k)}{k^{\frac{1}{p'}}}. \]
Therefore, we have that 
\[  |Q|^{\alpha+\frac{1}{q}-\frac{1}{p}}\|u_0\|_{1,Q}^{\frac{1}{q}}\|\sigma_0\|_{\Phi,Q}^{\frac{1}{p'}}, \quad
|Q|^{\alpha+\frac{1}{q}-\frac{1}{p}}
\|u_0\|_{\Phi,Q}^{\frac{1}{q}}\|\sigma_0\|_{1,Q}^{\frac{1}{p'}} \approx \frac{1}{\log(e+k)^{\frac{1}{2}}}, \]
but
\[ |Q|^{\alpha+\frac{1}{q}-\frac{1}{p}}
\|u_0\|_{\Phi,Q}^{\frac{1}{q}}\|\sigma_0\|_{\Phi,Q}^{\frac{1}{p'}} \approx \log(e+k)^{\frac{1}{2}}. \]

\medskip

We now define $u$ and $\sigma$ as follows:
\[ 
u(x) = \sum_{k\geq 2} K_k^q\chi_{I_k}(x), \qquad
\sigma(x) = \sum_{k\geq 2} \chi_{J_k}(x).
\]
where $I_k=(e^k+k-1,e^k+k)$ and $J_k= (e^k,e^k+1)$.
Since the above computations are translation invariant, we immediately
get that if $Q_k=(e^k,e^k+k)$, then 
\[ |Q_k|^{\alpha+\frac{1}{q}-\frac{1}{p}}
\|u\|_{\Phi,Q_k}^{\frac{1}{q}}\|\sigma\|_{\Phi,Q_k}^{\frac{1}{p'}} \approx \log(e+k)^{\frac{1}{2}}, \]
and so $[u,\sigma]_{A_{p,q,A,B}^\alpha}=\infty$.   

We will now prove
that $[u,\sigma]_{A_{p,q,B}^\alpha}$ and $[\sigma,u]_{A_{p,q,A}^\alpha}^*$ are both finite.  We
will show $[u,\sigma]_{A_{p,q,A}^\alpha}^*<\infty$; the argument for the
first condition is
essentially the same.    Fix an interval $Q$; we will show that
$|Q|^{\alpha+\frac{1}{q}-\frac{1}{p}}\|u\|_{\Psi,Q}^{\frac{1}{q}}\|\sigma\|_{1,Q}^{\frac{1}{p'}}$
is uniformly bounded.   Let $N$ be an integer such that 
$N-1 \leq |Q| \leq N$.   We need to consider those values of $k$ such
that $Q$ intersects either $I_k$ or $J_k$.  

Suppose that for some $k\geq N+2$, $Q$ intersects $I_k$.  But in this
case it cannot intersect $J_j$ for any $j$ and so
$\|\sigma\|_{1,Q}=0$.  Similarly, if $Q$ intersects $J_k$, then
$\|u\|_{\Psi,Q}=0$. 

Now suppose that for some $k<N+2$, $Q$ intersects one of $I_k$ or
$J_k$.  If $\log(N) \lesssim k$ (more precisely, if
$N<e^k-e^{k-1}-1$), then for any $j\neq k$, $Q$ cannot intersect $I_j$
or $J_j$.  In this case 
$|Q|^{\alpha+\frac{1}{q}-\frac{1}{p}}\|u\|_{\Psi,Q}^{\frac{1}{q}}\|\sigma\|_{1,Q}^{\frac{1}{p'}}\neq 0$ only if $Q$ intersects both
$I_k$ and $J_k$, and will reach its maximum when $N\approx k$.  But in
this case we can 
replace $Q$ by $(e^k,e^k+k)$ and the above computation shows
that $|Q|^{\alpha+\frac{1}{q}-\frac{1}{p}}\|u\|_{\Psi,Q}^{\frac{1}{q}}\|\sigma\|_{1,Q}^{\frac{1}{p'}}\lesssim 1$.

Finally, suppose $Q$ intersects one or more pairs $I_k$ and $J_k$ with
$k \lesssim \log(N)$.   Then $|\supp(u)\cap Q|\lesssim \log(N)$ and 
$\|u\|_{L^\infty(Q)} \approx K_{\lfloor\log(N)\rfloor}^q \lesssim
\log(N)^{q(1-\alpha)}$.  Therefore, for any $r>1$,
\[ \|u\|_{\Psi,Q}^{\frac{1}{q}} \lesssim \|u\|_{r,Q}^{\frac{1}{q}} \leq 
\|u\|_{L^\infty(Q)}^{\frac{1}{q}} \left(\frac{|\supp(u)\cap Q|}{|Q|}\right)^{\frac{1}{rq}}
\lesssim \frac{\log(N)^{1-\alpha+\frac{1}{rq}}}{N^{\frac{1}{rq}}}. \]
A similar calculation shows that
\[ \|\sigma \|_{1,Q}^{\frac{1}{p'}} \lesssim \left(\frac{\log(N)}{N}\right)^{\frac{1}{p'}}. \]
Hence, we have that  
\[
|Q|^{\alpha+\frac{1}{q}-\frac{1}{p}}\|u\|_{\Phi,Q}^{\frac{1}{q}}\|\sigma\|_{1,Q}^{\frac{1}{p'}}
\lesssim N^{\alpha+\frac{1}{q}-\frac{1}{p}-\frac{1}{rq}-\frac{1}{p'}}
\log(N)^{1-\alpha+\frac{1}{rq}+\frac{1}{p'}}. \]
Since $\alpha<1$, if we fix $r>1$ sufficiently close to 1 we have that
the exponent on $N$ is negative, and so this quantity will be
uniformly bounded for all $N$.  
We thus have that $[u,\sigma]_{A_{p,q,A}^\alpha}^*<\infty$ and our
proof is complete.
\end{proof}

\subsection*{Bump conditions for fractional maximal operators}

There is a parallel between bump conditions and the testing conditions
described in Section~\ref{section:testing}.  For maximal operators,
only a single testing condition is needed for the strong type
inequality; similarly, only a single bump (on the right) is required
to get a sufficient condition.  The following result is due to
P\'erez~\cite{MR1291534,MR1327936} and our proof is based on his.

\begin{theorem} \label{thm:strong-max-bump}
Given $0\leq \alpha<n$, $1<p\leq q <\infty$, and a Young function $B$
such that $\bar{B}\in B_p$, suppose the pair of
weights $(u,\sigma)$ is such that
$[u,\sigma]_{A_{p,q,B}^\alpha}<\infty$.
Then for every $f\in L^p(\sigma)$,
\[ \left(\int_\subRn M_\alpha (f\sigma)(x)^q u(x)\,dx\right)^{\frac{1}{q}}
\leq C(n,p,q) [u,\sigma]_{A_{p,q,B}^\alpha}\left(\int_\subRn
  |f(x)|^p \sigma(x)\,dx\right)^{\frac{1}{p}}. \]
\end{theorem}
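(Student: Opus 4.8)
\medskip

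The plan is to follow the template of the one-weight proofs: reduce to a sparse operator, and then replace the role played there by the $A_\infty$/reverse-H\"older machinery with the generalized H\"older inequality in Orlicz spaces together with the $B_p$ boundedness of the Orlicz maximal operator $M_{\bar B}$ (Proposition~\ref{prop:perez-Bp}).

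First I would make the standard reductions. Arguing exactly as in the proof of Theorem~\ref{thm:weak-max-one}, we may assume that $f$ is non-negative, bounded and compactly supported. Note that the hypothesis $\bar B\in B_p$ implies $B(t)\gtrsim t^{p'}$, so $\|\sigma^{1/p'}\|_{p',Q}\lesssim\|\sigma^{1/p'}\|_{B,Q}<\infty$ for every cube $Q$; hence $\sigma\in L^1_{\mathrm{loc}}$, so that every quantity below is meaningful, and moreover $f\sigma\in L^1$, so its averages tend to $0$ at infinity. By Propositions~\ref{prop:dyadic-max} and~\ref{prop:max-sparse} it therefore suffices to fix a dyadic grid $\D$ and a sparse set $\Ss\subset\D$ (depending on $f\sigma$) and to prove the strong type inequality for $L_\alpha^\Ss$; since the $A_{p,q,B}^\alpha$ condition is a geometric condition over \emph{all} cubes, it is untouched by this reduction.

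Next, using that the sets $E(Q)$, $Q\in\Ss$, are pairwise disjoint, I would write
\[
\|L_\alpha^\Ss(f\sigma)\|_{L^q(u)}^q
= \sum_{Q\in\Ss} |Q|^{q\frac{\alpha}{n}}\,\avg{f\sigma}_Q^{\,q}\, u(E(Q)).
\]
To each average I apply the generalized H\"older inequality with $B$ and its associate $\bar B$, writing $f\sigma=(f\sigma^{1/p})\,\sigma^{1/p'}$:
\[
\avg{f\sigma}_Q=\avgint_Q (f\sigma^{1/p})\,\sigma^{1/p'}\,dx
\le 2\,\|f\sigma^{1/p}\|_{\bar B,Q}\,\|\sigma^{1/p'}\|_{B,Q},
\]
and I estimate $u(E(Q))\le u(Q)=|Q|\,\|u^{1/q}\|_{q,Q}^{\,q}$. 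Grouping the $u$- and $\sigma$-factors and doing the (purely formal, since there is no relation among $p$, $q$, $\alpha$) bookkeeping with the exponents gives
\[
|Q|^{q\frac{\alpha}{n}+1}\,\|u^{1/q}\|_{q,Q}^{\,q}\,\|\sigma^{1/p'}\|_{B,Q}^{\,q}
\le [u,\sigma]_{A_{p,q,B}^\alpha}^{\,q}\,|Q|^{q/p},
\]
so that
\[
\|L_\alpha^\Ss(f\sigma)\|_{L^q(u)}^q
\le 2^q\,[u,\sigma]_{A_{p,q,B}^\alpha}^{\,q}\sum_{Q\in\Ss}\Big(|Q|^{1/p}\,\|f\sigma^{1/p}\|_{\bar B,Q}\Big)^{q}.
\]

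Finally, since $p\le q$, the embedding $\ell^p\hookrightarrow\ell^q$ lets me pull the exponent $q$ out of the sum, so it remains to bound $\sum_{Q\in\Ss}|Q|\,\|f\sigma^{1/p}\|_{\bar B,Q}^{\,p}$. Here sparseness enters once more: $|Q|\le 2|E(Q)|$, and $\|f\sigma^{1/p}\|_{\bar B,Q}\le M_{\bar B}(f\sigma^{1/p})(x)$ for every $x\in E(Q)\subset Q$, and the $E(Q)$ are disjoint, so this sum is at most $2\int_{\subRn} M_{\bar B}(f\sigma^{1/p})(x)^p\,dx$. Since $\bar B\in B_p$, Proposition~\ref{prop:perez-Bp} bounds this by $C(n,p)\int_{\subRn}(f\sigma^{1/p})^p\,dx=C(n,p)\|f\|_{L^p(\sigma)}^p$. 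Combining the estimates and taking $q$-th roots yields the theorem. The main point demanding care is the generalized H\"older step: one must keep the \emph{full} Orlicz norm $\|f\sigma^{1/p}\|_{\bar B,Q}$ rather than degrade it to an $L^p$ average, since it is exactly this that lets the $B_p$ bound for $M_{\bar B}$ close the estimate---and it is the reason the hypothesis must be $\bar B\in B_p$ and not merely $\bar B\lesssim t^p$. The exponent bookkeeping and the $\ell^p\hookrightarrow\ell^q$ passage are the remaining routine points; there is no genuinely deep obstacle, as the argument rides entirely on the reductions of Section~\ref{section:dyadic}.
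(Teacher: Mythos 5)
Your proof is correct and follows essentially the same path as the paper's: reduce to the sparse operator $L_\alpha^\Ss$, use disjointness of the sets $E(Q)$, apply generalized H\"older to introduce $\|f\sigma^{1/p}\|_{\bar B,Q}$, invoke the bump condition, pass from $\ell^p$ to $\ell^q$, use sparseness to pass to $|E(Q)|$, and close with the $B_p$ bound for $M_{\bar B}$. The only difference is the (immaterial) order in which $|Q|\le 2|E(Q)|$ is applied relative to the $\ell^p\hookrightarrow\ell^q$ step, and your preliminary remark on local integrability of $\sigma$ is a small bonus the paper leaves implicit.
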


\medskip

Note that while our proof shows directly that the constant
depends linearly on $[u,\sigma]_{A_{p,q,B}^\alpha}$, in fact this is
always true in two weight inequalities.  This is an observation due to
Sawyer:  see~\cite[Remark~1.4]{cruz-moen2012}.

\begin{proof}
Arguing as we did in the proof of
Theorem~\ref{thm:strong-max-one}, we may assume that $f$ is
non-negative, bounded
and has compact support, and it will suffice to prove the desired
inequality for $L_\alpha^\Ss$, where $\Ss$ is a sparse subset of a
dyadic grid $\D$.   Indeed, we begin as we did
there, using the fact that the sets $E(Q)$ are disjoint.  But 
instead of the $A_\infty$ property we will use the generalized H\"older's
inequality to introduce the Orlicz maximal operator.  This allows us
to sum over the cubes in $\Ss$ and apply
Proposition~\ref{prop:perez-Bp} to get the desired estimate:
\begin{align*}
\|L_\alpha^\Ss (f\sigma)\|_{L^q(u)}^q 
& = \sum_{Q\in \Ss} |Q|^{q\frac{\alpha}{n}}\avg{f\sigma}_Q^q u(E(Q)) \\
& =\sum_{Q\in \Ss}|Q|^{q\frac{\alpha}{n}+1-\frac{q}{p}}\avg{f\sigma}_Q^q
\avg{u}_Q |Q|^{\frac{q}{p}} \\
& \leq 2 ^{\frac{q}{p}+1}\sum_{Q\in \Ss}|Q|^{q\frac{\alpha}{n}+1-\frac{q}{p}}
\avg{u}_Q \|\sigma^{\frac{1}{p'}}\|_{B,Q}^q
  \|f\sigma^{\frac{1}{p}}\|_{\bar{B},Q}^q |E(Q)|^{\frac{q}{p}} \\
& \leq 2 ^{\frac{q}{p}+1}[u,\sigma]_{A_{p,q,B}^\alpha}^q\sum_{Q\in \Ss}
 \|f\sigma^{\frac{1}{p}}\|_{\bar{B},Q}^q |E(Q)|^{\frac{q}{p}} \\
& \leq 2 ^{\frac{q}{p}+1}[u,\sigma]_{A_{p,q,B}^\alpha}^q\left(\sum_{Q\in \Ss}
 \|f\sigma^{\frac{1}{p}}\|_{\bar{B},Q}^p |E(Q)|\right)^{\frac{q}{p}}
\\
& \leq 2 ^{\frac{q}{p}+1}[u,\sigma]_{A_{p,q,B}^\alpha}^q\left(\sum_{Q\in\Ss}
\int_{E(Q)} M_{\bar{B}}(f\sigma^{\frac{1}{p}})(x)^p\,dx \right)^{\frac{q}{p}} \\
& \leq 2 ^{\frac{q}{p}+1}[u,\sigma]_{A_{p,q,B}^\alpha}^q\left(
\int_\subRn M_{\bar{B}}(f\sigma^{\frac{1}{p}})(x)^p\,dx
\right)^{\frac{q}{p}} \\
& \leq C(n,p,q) [u,\sigma]_{A_{p,q,B}^\alpha}^q\left(
\int_\subRn |f(x)|^p \sigma(x)\,dx\right)^{\frac{q}{p}}. \\
\end{align*}
\end{proof}

\subsection*{Bump conditions for fractional integral operators}
We now consider bump conditions for the fractional
integral operator.   For the strong type condition, we need two bumps,
analogous to the fact that you need two testing conditions.   Our
first result is for conjoined bumps; we will discuss separated bump
conditions in Section~\ref{section:separated} below.  Theorem~\ref{thm:frac-joined} was originally proved by
P\'erez~\cite{MR1291534} and our proof is modeled on his.

\begin{theorem} \label{thm:frac-joined}
Given $0< \alpha<n$, $1<p\leq q <\infty$, and Young functions $A,\,B$
such that $\bar{A}\in B_{q'}$ and $\bar{B}\in B_p$, suppose the pair of
weights $(u,\sigma)$ is such that
$[u,\sigma]_{A_{p,q,A,B}^\alpha}<\infty$.
Then for every $f\in L^p(\sigma)$,
\[ \left(\int_\subRn |I_\alpha (f\sigma)(x)|^q u(x)\,dx\right)^{\frac{1}{q}}
\leq C(n,p) [u,\sigma]_{A_{p,q,A,B}^\alpha}\left(\int_\subRn
  |f(x)|^p \sigma(x)\,dx\right)^{\frac{1}{p}}. \]
\end{theorem}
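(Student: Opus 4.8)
The plan is to follow P\'erez's argument: reduce to a sparse dyadic model, dualize, insert the two Orlicz maximal operators via the generalized H\"older inequality, and close the estimate using the hypotheses $\bar B\in B_p$, $\bar A\in B_{q'}$. First I would make the usual reductions. Since $I_\alpha$ is a positive operator we may assume $f\ge 0$, and, arguing as in the proof of Theorem~\ref{thm:strong-frac-one} (monotone convergence), that $f$ is bounded with compact support, so that $\avg{f\sigma}_Q\to 0$ as $|Q|\to\infty$. By Propositions~\ref{prop:dyadic-frac} and~\ref{prop:frac-sparse} it then suffices to prove the inequality for the sparse operator $I_\alpha^\Ss(f\sigma)$, where $\Ss$ is a sparse subset of a fixed dyadic grid $\D$. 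Fixing such an $\Ss$, duality (after the routine truncation used in Section~\ref{section:testing}) produces a nonnegative $g$ with $\|g\|_{L^{q'}(u)}=1$ and
\[ \|I_\alpha^\Ss(f\sigma)\|_{L^q(u)}=\int_\subRn I_\alpha^\Ss(f\sigma)(x)\,g(x)u(x)\,dx=\sum_{Q\in\Ss}|Q|^{\frac{\alpha}{n}+1}\avg{f\sigma}_Q\avg{gu}_Q. \]

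Next I would bring in the bump. On each cube the generalized H\"older inequality applied to $f\sigma=(f\sigma^{1/p})(\sigma^{1/p'})$ and to $gu=(gu^{1/q'})(u^{1/q})$ gives
\[ \avg{f\sigma}_Q\le 2\|f\sigma^{1/p}\|_{\bar B,Q}\|\sigma^{1/p'}\|_{B,Q},\qquad \avg{gu}_Q\le 2\|gu^{1/q'}\|_{\bar A,Q}\|u^{1/q}\|_{A,Q}. \]
Splitting $|Q|^{\frac{\alpha}{n}+1}=|Q|^{\frac{\alpha}{n}+\frac1q-\frac1p}\,|Q|^{\frac1p}\,|Q|^{\frac1{q'}}$ and bounding $|Q|^{\frac{\alpha}{n}+\frac1q-\frac1p}\|u^{1/q}\|_{A,Q}\|\sigma^{1/p'}\|_{B,Q}\le[u,\sigma]_{A_{p,q,A,B}^\alpha}$, the right side above is at most
\[ 4\,[u,\sigma]_{A_{p,q,A,B}^\alpha}\sum_{Q\in\Ss}\big(|Q|^{\frac1p}\|f\sigma^{1/p}\|_{\bar B,Q}\big)\big(|Q|^{\frac1{q'}}\|gu^{1/q'}\|_{\bar A,Q}\big). \]
Now I would use sparseness: $|Q|\le 2|E(Q)|$, the sets $E(Q)$ are pairwise disjoint, and for $x\in Q$ one has $\|f\sigma^{1/p}\|_{\bar B,Q}\le M_{\bar B}(f\sigma^{1/p})(x)$ and $\|gu^{1/q'}\|_{\bar A,Q}\le M_{\bar A}(gu^{1/q'})(x)$. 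Setting $a_Q=\big(\int_{E(Q)}M_{\bar B}(f\sigma^{1/p})^p\,dx\big)^{1/p}$ and $b_Q=\big(\int_{E(Q)}M_{\bar A}(gu^{1/q'})^{q'}\,dx\big)^{1/q'}$, we get $|Q|^{1/p}\|f\sigma^{1/p}\|_{\bar B,Q}\le 2^{1/p}a_Q$ and $|Q|^{1/q'}\|gu^{1/q'}\|_{\bar A,Q}\le 2^{1/q'}b_Q$. Then H\"older in the index $Q$ with exponents $p,p'$, followed by the inclusion $\ell^{q'}\hookrightarrow\ell^{p'}$ (valid since $q'\le p'$, that is, $p\le q$), gives
\[ \sum_{Q\in\Ss}a_Qb_Q\le\Big(\sum_Q a_Q^p\Big)^{\frac1p}\Big(\sum_Q b_Q^{p'}\Big)^{\frac1{p'}}\le\Big(\sum_Q a_Q^p\Big)^{\frac1p}\Big(\sum_Q b_Q^{q'}\Big)^{\frac1{q'}}. \]
Since the $E(Q)$ are disjoint, $\sum_Q a_Q^p\le\int_\subRn M_{\bar B}(f\sigma^{1/p})^p\,dx$ and $\sum_Q b_Q^{q'}\le\int_\subRn M_{\bar A}(gu^{1/q'})^{q'}\,dx$; Proposition~\ref{prop:perez-Bp}, used once with $\bar B\in B_p$ and once with $\bar A\in B_{q'}$, bounds these by $C\|f\|_{L^p(\sigma)}^p$ and $C\|g\|_{L^{q'}(u)}^{q'}=C$ respectively. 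Collecting the constants yields the theorem.

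Compared with the maximal-operator case (Theorem~\ref{thm:strong-max-bump}) the new feature is that we cannot pull the power $q$ inside the sum over cubes, so we must dualize; the excerpt's remark after Theorem~\ref{thm:strong-max-one} applies verbatim. With that detour made, there is no single hard obstacle, and the only points needing care are: (i) the exponent bookkeeping, so that the factor peeled off as $[u,\sigma]_{A_{p,q,A,B}^\alpha}$ carries exactly $|Q|^{\alpha/n+1/q-1/p}\|u^{1/q}\|_{A,Q}\|\sigma^{1/p'}\|_{B,Q}$ while the residual powers $|Q|^{1/p}$ and $|Q|^{1/q'}$ are converted by sparseness into $L^p(\sigma)$- and $L^{q'}(u)$-norms of the bumped maximal functions; and (ii) the off-diagonal step $(\sum_Q b_Q^{p'})^{1/p'}\le(\sum_Q b_Q^{q'})^{1/q'}$, which is the only place the gap $q>p$ is used. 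I note finally that, unlike in the proof of Theorem~\ref{thm:frac-testing}, here it genuinely pays to pass all the way to $I_\alpha^\Ss$, since the argument uses $|Q|\le 2|E(Q)|$.
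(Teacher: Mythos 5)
Your proposal is correct and follows essentially the same route as the paper's proof: reduce to the sparse operator $I_\alpha^\Ss$, dualize, apply the generalized H\"older inequality on each cube to peel off $[u,\sigma]_{A_{p,q,A,B}^\alpha}$, convert the leftover powers of $|Q|$ to $|E(Q)|$ by sparseness, apply H\"older in $Q$ with exponents $p,p'$ followed by the embedding $\ell^{q'}\hookrightarrow\ell^{p'}$ (using $q'\leq p'$), and finish with Proposition~\ref{prop:perez-Bp} applied to $M_{\bar B}$ and $M_{\bar A}$. The exponent bookkeeping and the places where sparseness and $p\leq q$ are used match the paper's argument exactly.
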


We note that Theorem~\ref{thm:frac-joined} was very influential in the
study of two weight norm inequalities, and it led to the conjecture that an
analogous result held for singular integral operators.    This problem
was solved recently by Lerner~\cite{Lern2012};  for prior results
see~\cite{cruz-uribe-martell-perezP,dcu-martell-perez,cruz-uribe-perez02}. 

\begin{proof}
Arguing as we did in the proof of Theorem~\ref{thm:strong-frac-one}, we may assume that $f$ is
non-negative, bounded and has compact support.  Further, it will suffice to prove the desired
inequality for $I_\alpha^\Ss$, where 
$\Ss$ is a sparse subset of a dyadic grid $\D$.

We  begin as in the one weight case by applying duality.  But here we
use the generalized H\"older's inequality to introduce two Orlicz
maximal operators and use these to sum over cubes in $\Ss$.  We can
then apply Proposition~\ref{prop:perez-Bp} twice.  More precisely,
by duality there exists $g\in L^{q'}(u)$, $\|g\|_L^{q'}(u)=1$, such that
\begin{align*}
& \|I_\alpha^\Ss(f\sigma)\|_{L^{q'}(u)} \\
& \qquad = \int_\subRn I_\alpha^\Ss(f\sigma) g(x)u(x)\,dx \\
& \qquad = \sum_{Q\in \Ss} |Q|^{\frac{\alpha}{n}}\avg{f\sigma}_Q \avg{gu}_Q |Q| \\
& \qquad \leq 2^2\sum_{Q\in \Ss}  |Q|^{\frac{\alpha}{n}+\frac{1}{q}-\frac{1}{p}}
\|u^{\frac{1}{q}}\|_{A,Q} \|\sigma^{\frac{1}{p'}}\|_{B,Q}
\|f\sigma^{\frac{1}{p}}\|_{\bar{B},Q}
\|gu^{\frac{1}{q'}}\|_{\bar{A},Q} |Q|^{\frac{1}{p}+\frac{1}{q'}} \\
& \qquad\leq 2^{\frac{1}{p}+\frac{1}{q'}+2}[u,\sigma]_{A_{p,q,A,B}^\alpha}
\sum_{Q\in \Ss} \|f\sigma^{\frac{1}{p}}\|_{\bar{B},Q}
|E(Q)|^{\frac{1}{p}}
\|gu^{\frac{1}{q'}}\|_{\bar{A},Q} |E(Q)|^{\frac{1}{q'}}. \\
\intertext{By H\"older's inequality and the fact that $q'\leq p'$, we
  have that}
& \qquad\leq 16[u,\sigma]_{A_{p,q,A,B}^\alpha}
\left(\sum_{Q\in \Ss} \|f\sigma^{\frac{1}{p}}\|_{\bar{B},Q}^p
|E(Q)|\right) ^{\frac{1}{p}}
\left(\sum_{Q\in \Ss} \|gu^{\frac{1}{q'}}\|_{\bar{A},Q}^{p'}
  |E(Q)|^{\frac{p'}{q'}}\right)^{\frac{1}{p'}} \\
& \qquad\leq 16[u,\sigma]_{A_{p,q,A,B}^\alpha}
\left(\sum_{Q\in \Ss} \|f\sigma^{\frac{1}{p}}\|_{\bar{B},Q}^p
|E(Q)|\right) ^{\frac{1}{p}}
\left(\sum_{Q\in \Ss} \|gu^{\frac{1}{q'}}\|_{\bar{A},Q}^{q'}
  |E(Q)|\right)^{\frac{1}{q'}} \\
& \qquad\leq 16[u,\sigma]_{A_{p,q,A,B}^\alpha}
\left(\sum_{Q\in \Ss} \int_{E(Q)}
  M_{\bar{B}}(f\sigma^{\frac{1}{p}})(x)^p\,dx \right) ^{\frac{1}{p}}
\left(\sum_{Q\in \Ss} \int_{E(Q)}
  M_{\bar{A}}(gu^{\frac{1}{q'}})(x)^{q'}\,dx \right) ^{\frac{1}{q'}}\\
& \qquad\leq 16[u,\sigma]_{A_{p,q,A,B}^\alpha} 
\left(\int_\subRn   M_{\bar{B}}(f\sigma^{\frac{1}{p}})(x)^p\,dx
\right) ^{\frac{1}{p}}
\left(\int_\subRn  M_{\bar{A}}(gu^{\frac{1}{q'}})(x)^{q'}\,dx \right)
^{\frac{1}{q'}}\\
& \qquad \leq C(n,p,q) [u,\sigma]_{A_{p,q,A,B}^\alpha} 
\left(\int_\subRn  f(x)^p\sigma(x)\,dx
\right) ^{\frac{1}{p}}.
\end{align*}
\end{proof}

\subsection*{Bump conditions for commutators}
Finally, we prove a conjoined bump condition for commutators.  Because
commutators are more singular, we need stronger bump conditions.
To use the fact that $b$ is a $BMO$ function, it is most natural to
state these in terms of log bumps.  This result was originally proved
in~\cite{cruz-moen2012}; our proof is a simplification of the argument
given there.

\begin{theorem} \label{thm:comm-joined}
Given $0< \alpha<n$, $1<p\leq q <\infty$, and $b\in BMO$, suppose the pair of
weights $(u,\sigma)$ is such that
$[u,\sigma]_{A_{p,q,A,B}^\alpha}<\infty$, where
\[ A(t)=t^q\log(e+t)^{2q-1+\delta}, \quad
B(t)=t^{p'}\log(e+t)^{2p'-1+\delta}, \quad \delta >0. \]
Then for every $f\in L^p(\sigma)$,
\[ \left(\int_\subRn |[b,I_\alpha] (f\sigma)(x)|^q u(x)\,dx\right)^{\frac{1}{q}}
\leq C(n,p)\|b\|_{BMO} [u,\sigma]_{A_{p,q,A,B}^\alpha}\left(\int_\subRn
  |f(x)|^p \sigma(x)\,dx\right)^{\frac{1}{p}}. \]
\end{theorem}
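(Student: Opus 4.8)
The plan is to avoid a dyadic model of the commutator---which, as observed after Proposition~\ref{prop:dyadic-commutator}, is delicate since $[b,I_\alpha]$ is not positive---and instead to deduce the estimate from Theorem~\ref{thm:frac-joined} for $I_\alpha$ itself, by means of the Cauchy integral (conjugation) technique of Chung, Pereyra and P\'erez. First I would make the standard reductions: decomposing $b$ into real and imaginary parts we may assume $b$ is real-valued, and by truncating $b$ (which changes $\|b\|_{BMO}$ by at most a dimensional factor) we may assume $b$ is bounded; as in the proof of Theorem~\ref{thm:frac-joined} we may also take $f$ nonnegative, bounded and compactly supported. Under these hypotheses, for a.e.\ fixed $x$ the map $z\mapsto e^{zb(x)}I_\alpha(e^{-zb}f\sigma)(x)$ is entire with derivative $[b,I_\alpha](f\sigma)(x)$ at $z=0$, so Cauchy's formula gives, for every $\epsilon>0$,
\[ [b,I_\alpha](f\sigma)(x)=\frac{1}{2\pi i}\oint_{|z|=\epsilon}\frac{e^{zb(x)}I_\alpha(e^{-zb}f\sigma)(x)}{z^2}\,dz. \]
Since $I_\alpha$ has a positive kernel, $|e^{zb}I_\alpha(e^{-zb}f\sigma)|\le e^{sb}I_\alpha(e^{-sb}f\sigma)$ with $s=\mathrm{Re}\,z$; taking $L^q(u)$ norms, using Minkowski's integral inequality and $\|e^{sb}g\|_{L^q(u)}=\|g\|_{L^q(ue^{qsb})}$, this yields
\[ \|[b,I_\alpha](f\sigma)\|_{L^q(u)}\le \frac{1}{\epsilon}\,\sup_{|s|\le\epsilon}\big\|I_\alpha(e^{-sb}f\sigma)\big\|_{L^q(ue^{qsb})}. \]

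The next step is to view each term on the right as an instance of Theorem~\ref{thm:frac-joined} for a perturbed weight pair. Fix $|s|\le\epsilon$ and set $u_s=ue^{qsb}$, $\sigma_s=\sigma e^{-p'sb}$ and $h=fe^{(p'-1)sb}$; using $p(p'-1)=p'$ one checks that $h\sigma_s=e^{-sb}f\sigma$ and $\|h\|_{L^p(\sigma_s)}=\|f\|_{L^p(\sigma)}$. Applying Theorem~\ref{thm:frac-joined} with the \emph{weaker} log bumps $A_0(t)=t^q\log(e+t)^{q-1+\delta}$ and $B_0(t)=t^{p'}\log(e+t)^{p'-1+\delta}$, for which $\bar{A_0}\in B_{q'}$ and $\bar{B_0}\in B_p$, we obtain $\|I_\alpha(e^{-sb}f\sigma)\|_{L^q(u_s)}\le C(n,p)\,[u_s,\sigma_s]_{A_{p,q,A_0,B_0}^\alpha}\|f\|_{L^p(\sigma)}$. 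Everything then comes down to the stability estimate $[u_s,\sigma_s]_{A_{p,q,A_0,B_0}^\alpha}\le C\,[u,\sigma]_{A_{p,q,A,B}^\alpha}$, valid for $|s|\le c_0/\|b\|_{BMO}$ with $c_0$ the John--Nirenberg constant.

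To prove this last estimate, fix a cube $Q$ and write $b_Q=\avg{b}_Q$. Since $u_s^{1/q}=u^{1/q}e^{sb}$ and $\sigma_s^{1/p'}=\sigma^{1/p'}e^{-sb}$, the constants $e^{\pm sb_Q}$ pull out and cancel, so it suffices to bound $\|u^{1/q}e^{s(b-b_Q)}\|_{A_0,Q}$ and $\|\sigma^{1/p'}e^{-s(b-b_Q)}\|_{B_0,Q}$. By the generalized H\"older inequality in Orlicz spaces, $\|u^{1/q}e^{s(b-b_Q)}\|_{A_0,Q}\le C\|u^{1/q}\|_{A,Q}\|e^{s(b-b_Q)}\|_{\exp L,Q}$, because $A^{-1}(t)(\exp L)^{-1}(t)\lesssim A_0^{-1}(t)$---this is exactly the bookkeeping $q-1+\delta=(2q-1+\delta)-q$ on the logarithmic exponents, and likewise $p'-1+\delta=(2p'-1+\delta)-p'$ for the $B$-side. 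By John--Nirenberg, $\|e^{\pm s(b-b_Q)}\|_{\exp L,Q}\le C$ for $|s|\le c_0/\|b\|_{BMO}$, and taking the supremum over $Q$ gives the claim. Choosing $\epsilon=c_0/\|b\|_{BMO}$ and combining the three displays produces the theorem, with constant $C(n,p,q)\|b\|_{BMO}[u,\sigma]_{A_{p,q,A,B}^\alpha}$.

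The main obstacle I anticipate is not the harmonic analysis but the rigor of the Cauchy integral representation: one must justify that $z\mapsto e^{zb(x)}I_\alpha(e^{-zb}f\sigma)(x)$ is analytic (differentiating under the integral sign), that its value at $z=0$ really is $[b,I_\alpha](f\sigma)(x)$, and that the contour integral may be interchanged with the $L^q(u)$ norm---which is precisely why the preliminary reduction to bounded $b$ and to bounded, compactly supported $f$ is needed. Once that is in place, the Orlicz--H\"older step together with John--Nirenberg is routine, and it is there that the extra logarithmic powers $2q-1+\delta$ and $2p'-1+\delta$ in the hypothesis (in place of the $q-1+\delta$ and $p'-1+\delta$ that $I_\alpha$ alone requires) are exactly what is consumed in absorbing the $e^{\pm sb}$ perturbation of the weights.
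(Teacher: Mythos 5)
Your argument is correct, but it follows a genuinely different route from the paper. The paper's proof stays entirely in the dyadic world: it reduces to the dyadic operator $C_b^\D$ via Proposition~\ref{prop:dyadic-commutator}, dualizes against $g\in L^{q'}(u)$, splits $|b(x)-b(y)|\le|b(x)-\avg{b}_Q|+|b(y)-\avg{b}_Q|$, runs a corona decomposition of $f\sigma$ with respect to Lebesgue measure, and controls the oscillation of $b$ on each stopping cube through the generalized H\"older inequality with $\Phi(t)=t\log(e+t)$ (so $\bar\Phi(t)\approx e^t-1$), John--Nirenberg, and the summation Lemma~\ref{lemma:sum-alpha}; the two extra logarithms in $A$ and $B$ are consumed by factoring $A^{-1}C^{-1}\lesssim\Phi^{-1}$ for an auxiliary Young function $C\in B_{q'}$, after which the estimate closes as in Theorem~\ref{thm:frac-joined}. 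You instead use the Chung--Pereyra--P\'erez conjugation, realizing $[b,I_\alpha]$ as a Cauchy derivative of $z\mapsto e^{zb}I_\alpha(e^{-zb}\,\cdot\,)$ and transferring Theorem~\ref{thm:frac-joined}, applied with the weaker single-log bumps $A_0,B_0$, to the perturbed pair $(ue^{qsb},\sigma e^{-p'sb})$; the extra logarithms in $A,B$ are consumed by $A^{-1}(\exp L)^{-1}\lesssim A_0^{-1}$ (and its $B$-analogue) together with John--Nirenberg for the $\exp L$ norm, which is exactly the right bookkeeping. This is essentially the original route of~\cite{cruz-moen2012}, which the paper's dyadic proof was written to replace. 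Your version is more modular---the commutator bound comes almost for free once the $I_\alpha$ bump theorem is in hand---but it carries the analytic overhead you correctly flag (truncating $b$, justifying analyticity and interchange of integrals, finiteness of $I_\alpha(e^{-zb}f\sigma)$). The paper's corona argument is heavier in bookkeeping but purely real-variable, works directly on the dyadic model of the commutator, and is the natural template for the open separated-bump and testing-condition problems for commutators raised at the end of Section~\ref{section:separated}.
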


The proof requires one lemma, which generalizes a result due to Sawyer
and Wheeden~\cite[p.~829]{MR1175693} and is proved in much the same
way.

\begin{lemma} \label{lemma:sum-alpha}
Fix $0<\alpha<n$,  a dyadic grid $\D$, and a Young function $\Phi$.  Then for any $P\in
\D$ and any function $f$,
\[ \sum_{\substack{Q\in \D \\ Q\subset P}} 
|Q|^{\frac{\alpha}{n}} |Q|\|f\|_{\Phi,Q}
\leq C(\alpha) |P|^{\frac{\alpha}{n}} |P|\|f\|_{\Phi,P}. \]
\end{lemma}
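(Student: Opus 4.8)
The plan is to organize the cubes $Q\subset P$ by their "generation" relative to $P$ and sum a geometric series. For each integer $r\geq 0$, let $\D_r(P)$ denote the collection of cubes $Q\in\D$ with $Q\subset P$ and $\ell(Q)=2^{-r}\ell(P)$; these partition $P$, so $\sum_{Q\in\D_r(P)}|Q|=|P|$ and each such $Q$ satisfies $|Q|=2^{-rn}|P|$. The first step is the monotonicity of the Luxemburg norm under the inclusion $Q\subset P$: since averages over the smaller set $Q$ are controlled, one has $\|f\|_{\Phi,Q}\leq C\frac{|P|}{|Q|}\|f\|_{\Phi,P}$ in general, but this crude bound is too lossy. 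Instead I would use the sharper, more robust fact that for $Q\subset P$,
\[
\|f\|_{\Phi,Q}\leq \|f\,\tfrac{|P|}{|Q|}\|_{\Phi,P}\cdot\tfrac{|Q|}{|P|}\cdot(\text{something})
\]
— more precisely, the cleanest route is: $\avgint_Q \Phi(|f|/\lambda)\,dx \le \frac{|P|}{|Q|}\avgint_P \Phi(|f|/\lambda)\,dx$, so taking $\lambda=\|f\|_{\Phi,P}\cdot\frac{|P|}{|Q|}$ and using convexity ($\Phi(t/c)\le \Phi(t)/c$ for $c\ge 1$) gives $\avgint_Q\Phi(|f|/\lambda)\,dx\le 1$, hence $\|f\|_{\Phi,Q}\le \frac{|P|}{|Q|}\|f\|_{\Phi,P}$. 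This is the only property of $\Phi$ we need beyond it being a Young function.

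With this pointwise-in-$Q$ bound in hand, the second step is the summation. For a fixed generation $r$,
\[
\sum_{Q\in\D_r(P)}|Q|^{\frac{\alpha}{n}}|Q|\,\|f\|_{\Phi,Q}
\leq \sum_{Q\in\D_r(P)}|Q|^{\frac{\alpha}{n}}|Q|\cdot\frac{|P|}{|Q|}\,\|f\|_{\Phi,P}
= (2^{-rn}|P|)^{\frac{\alpha}{n}}\,\|f\|_{\Phi,P}\sum_{Q\in\D_r(P)}|P|,
\]
which is not quite what I want because $\sum_{Q\in\D_r(P)}|P|=2^{rn}|P|$ blows up. The fix is to not throw away the $|Q|$ factor so aggressively: keep $|Q|^{\frac{\alpha}{n}}|Q|\|f\|_{\Phi,Q}\le |Q|^{\frac{\alpha}{n}}|P|\,\|f\|_{\Phi,P}$ and then sum $|Q|^{\frac{\alpha}{n}}=(2^{-rn}|P|)^{\frac{\alpha}{n}}=2^{-r\alpha}|P|^{\frac{\alpha}{n}}$ over the $2^{rn}$ cubes in $\D_r(P)$, getting $2^{rn}2^{-r\alpha}|P|^{\frac{\alpha}{n}}$. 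That still grows. So the actual argument must exploit that $\sum_{Q\in\D_r(P)}|Q|=|P|$ \emph{together with} the norm bound: write
\[
\sum_{Q\in\D_r(P)}|Q|^{\frac{\alpha}{n}}|Q|\,\|f\|_{\Phi,Q}
\le 2^{-r\alpha}|P|^{\frac{\alpha}{n}}\sum_{Q\in\D_r(P)}|Q|\,\|f\|_{\Phi,Q}
\le 2^{-r\alpha}|P|^{\frac{\alpha}{n}}\cdot\frac{|P|}{|P|}\cdot|P|\,\|f\|_{\Phi,P},
\]
using $\sum_{Q}|Q|\|f\|_{\Phi,Q}\le \sum_Q|Q|\cdot\frac{|P|}{|Q|}\|f\|_{\Phi,P}$ — no, that again gives $2^{rn}|P|\|f\|_{\Phi,P}$. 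The correct observation, which is the crux, is that $|Q|^{\alpha/n}=2^{-r\alpha}|P|^{\alpha/n}$ is \emph{constant} across a generation, so
\[
\sum_{Q\in\D_r(P)}|Q|^{\frac{\alpha}{n}}|Q|\,\|f\|_{\Phi,Q}
= 2^{-r\alpha}|P|^{\frac{\alpha}{n}}\sum_{Q\in\D_r(P)}|Q|\,\|f\|_{\Phi,Q},
\]
and the inner sum satisfies $\sum_{Q\in\D_r(P)}|Q|\,\|f\|_{\Phi,Q}\le \bigl(\sum_{Q}|Q|\bigr)\sup_Q\|f\|_{\Phi,Q}$, which is again lossy. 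The genuine fix — and this is what I'd actually write — is: $\sum_{Q\in\D_r(P)}|Q|\|f\|_{\Phi,Q}\le |P|^{\frac{1}{??}}\dots$; cleanest is to apply the scalar inequality $\|f\|_{\Phi,Q}\le \frac{|P|}{|Q|}\|f\|_{\Phi,P}$ only after pairing it with $|Q|$, yielding $|Q|\|f\|_{\Phi,Q}\le |P|\|f\|_{\Phi,P}$ for \emph{each} $Q$, but then summing over $2^{rn}$ cubes overshoots. So the resolution must be that we do \textbf{not} sum generation-by-generation naively; instead we note $|Q|\|f\|_{\Phi,Q}$ is, up to the convexity factor, bounded by $\int_Q (|f|\text{-type quantity})$, which \emph{is} additive.

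Concretely: the third and decisive step is to replace the Luxemburg norm by something additive. Using that $\Phi$ is a Young function with $\Phi(t)/t\to\infty$, one shows $|Q|\|f\|_{\Phi,Q}\le |Q|\cdot\bigl(1+\avgint_Q\Phi(|f|)\,dx\bigr)\le |Q|+\int_Q\Phi(|f|)\,dx$ — but the additive quantity $\int_Q\Phi(|f|)$ when summed over $\D_r(P)$ gives $\int_P\Phi(|f|)$, independent of $r$, and $\sum_r 2^{-r\alpha}$ converges. This is the mechanism. I'd therefore: (i) reduce to $\|f\|_{\Phi,P}=1$ by homogeneity (rescale $f$), so that $\|f\|_{\Phi,Q}\le\frac{|P|}{|Q|}$ and in particular $|Q|\|f\|_{\Phi,Q}\le|P|$, and moreover $\avgint_P\Phi(|f|)\le1$; (ii) for each $Q\in\D_r(P)$ use convexity to get $|Q|\|f\|_{\Phi,Q}\le \int_Q\Phi(|f|)\,dx + |Q|$ after suitable normalization, or more directly use the submultiplicativity-free bound $\|f\|_{\Phi,Q}\le\max(1,\avgint_Q\Phi(|f|)\,dx)$ which fails in general — so instead (ii$'$) simply use $|Q|\|f\|_{\Phi,Q}=|Q|\|f\chi_Q\|_{\Phi,Q}$ and the fact that for a Young function the map $E\mapsto \|f\chi_E\|_{L^\Phi}$ (unnormalized Orlicz norm) is monotone and \emph{subadditive}, giving $\sum_{Q\in\D_r(P)}|Q|\|f\|_{\Phi,Q}=\sum_Q\|f\chi_Q\|_{L^\Phi(Q)}\le\|f\chi_P\|_{L^\Phi(P)}=|P|\|f\|_{\Phi,P}$ — here $\|f\chi_Q\|_{L^\Phi(Q)}$ denotes the \emph{non-averaged} Luxemburg norm, and $\|f\chi_Q\|_{L^\Phi(Q)}=|Q|\|f\|_{\Phi,Q}$ up to the normalization convention, with subadditivity over the disjoint $Q$'s being exactly the triangle inequality for the norm $\|\cdot\|_{L^\Phi}$. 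Then (iii) assemble:
\[
\sum_{\substack{Q\in\D\\Q\subset P}}|Q|^{\frac{\alpha}{n}}|Q|\,\|f\|_{\Phi,Q}
=\sum_{r=0}^\infty 2^{-r\alpha}|P|^{\frac{\alpha}{n}}\sum_{Q\in\D_r(P)}|Q|\,\|f\|_{\Phi,Q}
\le |P|^{\frac{\alpha}{n}}|P|\,\|f\|_{\Phi,P}\sum_{r=0}^\infty 2^{-r\alpha}
=\frac{1}{1-2^{-\alpha}}|P|^{\frac{\alpha}{n}}|P|\,\|f\|_{\Phi,P},
\]
which is the claim with $C(\alpha)=(1-2^{-\alpha})^{-1}$.

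\textbf{Main obstacle.} The one subtle point — and the only place care is needed — is step (ii$'$): the claim that the unnormalized Luxemburg functional $g\mapsto\inf\{\lambda>0:\int_{\R^n}\Phi(|g|/\lambda)\,dx\le1\}$ is a genuine norm (so that $\|f\chi_P\|\le\sum\|f\chi_Q\|$ for the disjoint decomposition $P=\bigsqcup_r\bigsqcup_{Q\in\D_r(P)}$, i.e.\ that this subadditivity even makes sense across infinitely many generations) and that $\|f\chi_Q\|_{L^\Phi}$ equals $|Q|\|f\|_{\Phi,Q}$ under the normalization $\|1\|_{\Phi,Q}=1$ chosen in the paper. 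This is the classical fact that $\|f\|_{\Phi,Q}=\inf\{\lambda:\avgint_Q\Phi(|f|/\lambda)\le1\}$ and the non-averaged version on $Q$ with the counting-against-$|Q|$ convention differ by the factor $|Q|$ precisely because $\Phi(1)=1$; with a general (non-normalized) $\Phi$ one picks up a harmless constant depending only on $\Phi$, which can be absorbed or, as Sawyer--Wheeden do, sidestepped since $\Phi$ is fixed. I would state this monotonicity/subadditivity as the one cited ingredient (referencing \cite{rao-ren91}) and then the geometric-series bookkeeping above is routine. An equivalent and perhaps cleaner writeup avoids the norm-of-restriction language entirely: since $\Phi$ is convex with $\Phi(0)=0$, for $\lambda\ge\|f\|_{\Phi,P}$ one has $\avgint_Q\Phi(|f|/\lambda)\,dx\le\frac{|P|}{|Q|}\avgint_P\Phi(|f|/\lambda)\,dx\le\frac{|P|}{|Q|}$, hence by convexity $\avgint_Q\Phi\bigl(\frac{|Q|}{|P|}\cdot\frac{|f|}{\lambda}\bigr)\,dx\le\frac{|Q|}{|P|}\cdot\frac{|P|}{|Q|}=1$, i.e.\ $\|f\|_{\Phi,Q}\le\frac{|P|}{|Q|}\|f\|_{\Phi,P}$; but to sum we need the $\ell^1$-summability $\sum_{Q\in\D_r(P)}|Q|\|f\|_{\Phi,Q}\le|P|\|f\|_{\Phi,P}$ which does \emph{not} follow from the pointwise bound and genuinely requires the Orlicz triangle inequality over the partition — that is the heart of the lemma, and it is exactly the step that generalizes \cite[p.~829]{MR1175693}.
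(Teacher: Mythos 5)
Your reduction to a per-generation estimate plus the geometric factor $\sum_r 2^{-r\alpha}$ is exactly the right skeleton, but the step you finally commit to for the per-generation bound --- step (ii$'$) --- is broken in two ways. First, the identification $|Q|\,\|f\|_{\Phi,Q}=\|f\chi_Q\|_{L^\Phi(Q)}$ (unnormalized Luxemburg norm) is false except when $\Phi(t)=t$: for $\Phi(t)=t^p$ the correct conversion factor is $|Q|^{1/p}$, and in general the discrepancy between the averaged and non-averaged norms depends on $|Q|$, not only on $\Phi$, so it cannot be absorbed into a ``harmless constant.'' Second, even granting some such identification, the triangle inequality runs the wrong way: it gives $\|f\chi_P\|\le\sum_Q\|f\chi_Q\|$, whereas you need the reverse, and that superadditivity is genuinely false for Orlicz norms. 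For example, with $\Phi(t)=e^t-1$, $f\equiv1$, and $P=[0,1]$ split into $N$ equal intervals, each piece has unnormalized norm $1/\log(N+1)$, so $\sum_Q\|f\chi_Q\|_{L^\Phi}\approx N/\log N\to\infty$ while $\|f\chi_P\|_{L^\Phi}=1/\log 2$. So the step you yourself identify as ``the heart of the lemma'' rests on a false inequality.

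Ironically, the route you considered and then discarded is the correct one, and it is essentially the paper's proof. The bound $\|f\|_{\Phi,Q}\le\max\bigl(1,\avgint_Q\Phi(|f|)\,dx\bigr)$ does \emph{not} fail: if $A:=\avgint_Q\Phi(|f|)\,dx\le1$ then $\lambda=1$ is admissible in the Luxemburg infimum, and if $A>1$ then convexity with $\Phi(0)=0$ gives $\avgint_Q\Phi(|f|/A)\,dx\le1$, so $\|f\|_{\Phi,Q}\le A$. Normalizing $\|f\|_{\Phi,P}=1$ (so that $\avgint_P\Phi(|f|)\,dx\le1$), this yields $|Q|\,\|f\|_{\Phi,Q}\le|Q|+\int_Q\Phi(|f|)\,dx$, and the right-hand side \emph{is} additive over the cubes of a fixed generation, summing to at most $2|P|=2|P|\,\|f\|_{\Phi,P}$; the factor $\sum_{r\ge0}2^{-r\alpha}$ then gives the lemma with $C(\alpha)=2(1-2^{-\alpha})^{-1}$. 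The paper implements precisely this mechanism via the Amemiya norm: it fixes a single $\lambda_0$ with $\lambda_0\avgint_P\bigl(1+\Phi(|f|/\lambda_0)\bigr)\,dx\le3\|f\|_{\Phi,P}$ and uses $|Q|\,\|f\|_{\Phi,Q}\le\lambda_0\int_Q\bigl(1+\Phi(|f|/\lambda_0)\bigr)\,dx$, whose additivity in $Q$ is the whole point. If you replace (ii$'$) by this argument (or by the $\max(1,A)$ bound with the normalization in (i)), your proof is correct and matches the paper's.
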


\begin{proof}
To prove this we need to replace the  Luxemburg norm with the equivalent Amemiya norm~\cite[Section~3.3]{rao-ren91}:  
\[ \|f\|_{\Phi,P} \leq  \inf_{\lambda>0} \left\{ \lambda
  \avgint_P 1+ \Phi\left(\frac{|f(x)|}{\lambda}\right)\,dx \right\} \leq 2\|f\|_{\Phi,P}. \]
By the second inequality, we can fix $\lambda_0>0$ such that the
middle quantity is less than
$3\|f\|_{\Phi, P}$.   Then by the first inequality, 
\begin{align*}
 \sum_{\substack{Q\in \D \\ Q\subset P}} 
|Q|^{\frac{\alpha}{n}} |Q|\|f\|_{\Phi,Q}
& = \sum_{k=0}^\infty \sum_{\substack{Q\subset P\\
  \ell(Q)=2^{-k}\ell(P)}}
|Q|^{\frac{\alpha}{n}} |Q|\|f\|_{\Phi,Q} \\
& \leq |P|^{\frac{\alpha}{n}} \sum_{k=0}^\infty  2^{-k\alpha} 
\sum_{\substack{Q\subset P\\
  \ell(Q)=2^{-k}\ell(P)}} 
\lambda_0 \int_Q 1+ \Phi\left(\frac{|f(x)|}{\lambda_0}\right)\,dx
\\
& = C(\alpha) |P|^{\frac{\alpha}{n}}
\lambda_0 \int_P 1+ \Phi\left(\frac{|f(x)|}{\lambda_0}\right)\,dx
\\
& \leq C(\alpha) |P|^{\frac{\alpha}{n}}|P|\|f\|_{\Phi,P}.
\end{align*}

\end{proof}

\begin{proof}
Fix $b\in BMO$. We first make some reductions.   Since $[b,I_\alpha]$ is linear, by splitting $f$ into
its positive and negative parts we may assume $f$ is non-negative.  By
Fatou's lemma we may assume that $f$ is bounded and has compact
support.  Finally, by Proposition~\ref{prop:dyadic-commutator} it will suffice to prove
this result for the dyadic operator $C_b^\D$, where $\D$ is any dyadic
grid.  

We begin by applying duality:  there exists $g\in L^{q'}(u)$,
$\|g\|_{L^{q'}(u)}=1$, such that
\begin{align*}
\|C_b^\D(f\sigma)\|_{L^{q'}(u)}
& = \int_\subRn C_b^\D(f\sigma)g(x)u(x)\,dx \\
& = \sum_{Q\in \D} |Q|^{\frac{\alpha}{n}}\int_Q \avgint_Q
|b(x)-b(y)|f(y)\sigma(y)\,dy\, g(x)u(x)\,dx \\
& \leq \sum_{Q\in \D} |Q|^{\frac{\alpha}{n}} \avgint_Q
|b(x)-\avg{b}_Q|g(x)u(x)\,dx\, \avg{f\sigma}_Q|Q| \\
& \qquad + \sum_{Q\in \D} |Q|^{\frac{\alpha}{n}}
\avgint_Q |b(y)-\avg{b}_Q|f(y)\sigma(y)\,dy \, \avg{gu}_Q |Q|.
\end{align*}

We will estimate the first term; the estimate for the second is
exactly the same, exchanging the roles of $f,\,\sigma$  and $g,\,u$.
Arguing as we did in the proof of Theorem~\ref{thm:frac-testing}, if
we let $\D_N$ be the set of all dyadic cubes $Q$ in $\D$ with
$\ell(Q)=2^N$, then it will suffice to bound this sum with $\D$
replaced by $\D_N$ and with a constant independent of $N$.   Form the
corona decomposition of $f\sigma$ with respect to Lebesgue measure for
each cube in $\D_N$.   Let $\F$ denote the
union of all these cubes.

Let $\Phi(t)=t\log(e+t)$; then $\bar{\Phi}(t)\approx e^t-1$, and so by
the generalized H\"older's inequality and the John-Nirenberg inequality,
\[ \avgint_Q |b(x)-\avg{b}_Q|g(x)u(x)\,dx 
\leq 2\|b-\avg{b}_Q\|_{\bar{\Phi},Q}\|gu\|_{\Phi,Q} 
\leq C(n)\|b\|_{BMO}\|gu\|_{\Phi,Q}. \]
Furthermore, if we define
\[ C(t) = \frac{t^{q'}}{\log(e+t)^{1+(q'-1)\delta}}, \]
then $C\in B_{q'}$ and $A^{-1}(t)C^{-1}(t) \lesssim \Phi^{-1}(t)$.
(See~\cite[Lemma~2.12]{cruz-moen2012} for the details of this calculation.)
We also have that $\bar{B}\in B_p$.  

If we combine all of these facts and use Lemma~\ref{lemma:sum-alpha}
and the generalized H\"older's inequality twice, we get that
\begin{align*}
& 
\sum_{Q\in \D_N} |Q|^{\frac{\alpha}{n}} \avgint_Q
|b(x)-\avg{b}_Q|g(x)u(x)\,dx\, \avg{f\sigma}_Q|Q|  \\
& \qquad \qquad 
\leq C(n) \|b\|_{BMO} \sum_{F\in \F} \avg{f\sigma}_F 
\sum_{\pi_\F(Q)=F} |Q|^{\frac{\alpha}{n}} |Q| \|gu\|_{\Phi,Q} \\
& \qquad \qquad 
\leq C(n,\alpha) \|b\|_{BMO} \sum_{F\in \F} \avg{f\sigma}_F 
|F|^{\frac{\alpha}{n}} |F| \|gu\|_{\Phi,F} \\
& \qquad \qquad 
\leq C(n,\alpha) \|b\|_{BMO} \sum_{F\in \F} 
|F|^{\frac{\alpha}{n}} \|u^{\frac{1}{q}}\|_{A,F}
\|\sigma^{\frac{1}{p'}}\|_{B,F}
 \|f\sigma^{\frac{1}{p}}\|_{\bar{B},F} \|gu^{\frac{1}{q'}}\|_{C,F}
   |E_\F(F)| \\
& \qquad \qquad 
\leq C(n,\alpha) \|b\|_{BMO} [u,\sigma]_{A_{p,q,A,B}^\alpha}\sum_{F\in
  \F} 
\|f\sigma^{\frac{1}{p}}\|_{\bar{B},F} \|gu^{\frac{1}{q'}}\|_{C,F}
   |E_\F(F)|^{\frac{1}{p}+\frac{1}{q'}}. \\
 \end{align*}
We can now apply H\"older's inequality and use the fact that
$\bar{B}\in B_p$ and $C\in B_{q'}$ to finish the argument exactly as
we did in the proof of Theorem~\ref{thm:frac-joined}.
\end{proof}

\section{Separated bump conditions}
\label{section:separated}

We conclude with a discussion of some very recent work and some
additional open problems for fractional integral operators and their
commutators.   To put these into context, we will first
review the Muckenhoupt-Wheeden conjectures for singular integral
operators and their relation to bump conditions.  For a more detailed
overview of these conjectures, see~\cite{dcu-martell-perez,MR2797562}.

\subsection*{The Muckenhoupt-Wheeden conjectures}
 In the late 1970's while studying two weight norm inequalities for the
Hilbert transform, Muckenhoupt and Wheeden made a series of
conjectures relating this problem to two weight norm inequalities for
the maximal operator.(\footnote{I first learned these conjectures from
  P\'erez, and later learned some of their history 
directly from Muckenhoupt.   However, they do not
appear to have ever been published until they appeared
in~\cite{MR2797562}.  The weak $(1,1)$ conjecture appeared shortly
before this in~\cite{MR2427454}.})  These conjectures were quickly
extended to general singular integral operators.   Restated in terms
of weights $(u,\sigma)$ instead of weights $(u,v)$ as they were
originally framed, they conjectured that for $1<p<\infty$, a sufficient condition for a
singular integral operator to satisfy $T(\cdot \sigma) :
L^p(\sigma)\rightarrow L^p(u)$ is that the maximal operator satisfy
\[ M(\cdot \sigma) :L^p(\sigma)\rightarrow L^p(u) \]
and the dual inequality
\[ M(\cdot u) : L^{p'}(u) \rightarrow L^{p'}(\sigma). \]
They further conjectured that the weak type inequality $T :
L^p(\sigma)\rightarrow L^{p,\infty}(u)$ holds if the maximal operator
only satisfies the dual inequality.   (Note the parallels between
these conjectures and the testing conditions described in
Section~\ref{section:testing}.)  Finally, they conjectured that the
following weak $(1,1)$ inequality holds:
\[ \sup_{t>0} t\, u(\{x\in \R^n : |Tf(x)|>t\}) 
\leq C\int_\subRn |f(x)|Mu(x)\,dx. \]

In the one weight case (i.e., with Muckenhoupt $A_p$ weights) all of
these conjectures are true, and with additional assumptions on the
weights (e.g., $u,\,v\in A_\infty$) they are true in the two weight
case.  However, all three conjectures
were recently shown to be false.  The weak $(1,1)$ conjecture was disproved by Reguera and
Thiele~\cite{reguera-thieleP}; the strong $(p,p)$ conjecture by
Reguera and Scurry~\cite{MR3020857}; and building on this the
weak $(p,p)$ conjecture was disproved in~\cite{CRV2012}. 

On the other hand, an ``off-diagonal'' version of this conjecture is
true~\cite{cruz-martell-perezP}:  if $1<p<q<\infty$,  and the maximal operator satisfies
\[ M(\cdot \sigma) :L^p(\sigma)\rightarrow L^q(u) \]
and the dual inequality
\[ M(\cdot u) : L^{q'}(u) \rightarrow L^{p'}(\sigma), \]
then $T : L^p(\sigma)\rightarrow L^q(u)$.    If the dual inequality
holds, then the weak $(p,q)$ inequality $T : L^p(\sigma)\rightarrow
L^{q,\infty}(u)$ holds as well.   Examples of such weights can be
easily constructed using the Sawyer testing condition
(Theorem~\ref{thm:max-testing} with $\alpha=0$).  For instance,
$u=\chi_{[0,1]}$ and $\sigma=\chi_{[2,3]}$ work for all $p>1$.

It follows from Theorem~\ref{thm:strong-max-bump} (with
$\alpha=0$) that these two off-diagonal inequalities for the maximal operator are implied by a pair of
separated bump conditions, $[u,\sigma]_{A_{p,q,B}^0},\,
[u,\sigma]_{A_{p,q,A}^0}^*<\infty$.   When $p=q$ this leads to the
separated bump conjectures for singular integrals:  if  $[u,\sigma]_{A_{p,p,B}^0},\,
[u,\sigma]_{A_{p,p,A}^0}^*<\infty$, then a singular integral satisfies
the strong $(p,p)$ inequality, and if the dual condition holds, it
satisfies the weak $(p,p)$ inequality.  This conjecture is due to P\'erez:  his study of
bump conditions was partly motivated by the Muckenhoupt-Wheeden conjectures.
It was first published, however, in~\cite{CRV2012}, where it was
proved for log bumps: $A(t)=t^p\log(e+t)^{p-1+\delta}$,
$B(t)=t^{p'}\log(e+t)^{p'-1+\delta}$, $\delta>0$ and some closely
related bump conditions (the so called ``loglog'' bumps). 
The proof was quite technical, relying on a ``freezing'' argument and
a version of the corona decomposition.  For
another, simpler proof that also holds in spaces of homogeneous type,
see~\cite{anderson-DCU-moen}.  It is not clear if the separated bump conjecture is
true for singular integrals  only assuming bumps that satisfy the
$B_p$ condition.  For very recent work that suggests it may  be
false, see Lacey~\cite{2013arXiv1310.3507L} and Treil and
Volberg~\cite{2014arXiv1408.0385T}.

\subsection*{Separated bump conditions for fractional integral operators}
Though never addressed by Muckenhoupt and Wheeden, their conjectures
for singular integrals extend naturally to fractional integrals as
well.   Such a generalization was first considered by Carro, {\em et
  al.}~\cite{MR2151228},
 who showed that the analog of the
Muckenhoupt weak $(1,1)$ conjecture,
\begin{equation} \label{eqn:weak-11-frac}
 \sup_{t>0} t\, u(\{x\in \R^n : |I_\alpha f(x)|>t\}) 
\leq C\int_\subRn |f(x)|M_\alpha u(x)\,dx, 
\end{equation}
is false.  

In~\cite{MR3065302} we made the following conjectures:  given
$0<\alpha<n$ and $1<p\leq q < \infty$, suppose the fractional maximal
operator satisfies
\begin{equation} \label{eqn:max-MW}
 M_\alpha(\cdot \sigma) :L^p(\sigma)\rightarrow L^q(u) 
\end{equation}
and the dual inequality
\begin{equation} \label{eqn:max-MW-dual}
 M_\alpha(\cdot u) : L^{q'}(u) \rightarrow L^{p'}(\sigma). 
\end{equation}
Then the strong $(p,q)$ inequality holds, and     if the dual inequality
holds, the weak $(p,q)$ inequality holds.  Analogous to the case of singular integrals,
both of these conjectures are true in when $p<q$:  this was proved
in~\cite{MR3065302}.  Earlier, in~\cite{MR3224572} we proved a weaker version of this
conjecture for separated bump conditions when $\frac{1}{p}-\frac{1}{q}
\approx \frac{\alpha}{n}$. 

\begin{theorem} \label{thm:MW-offdiag}
Given $0<\alpha<n$ and $1<p<q<\infty$, suppose the pair of weights
$(u,\sigma)$ are such that~\eqref{eqn:max-MW}
and~\eqref{eqn:max-MW-dual} hold.  Then $I_\alpha  :
L^p(\sigma)\rightarrow L^q(u)$.  If~\eqref{eqn:max-MW-dual} holds,
then $I_\alpha : L^p(\sigma)\rightarrow
L^{q,\infty}(u)$. 
\end{theorem}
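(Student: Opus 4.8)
The plan is to reduce everything to the dyadic operators $I_\alpha^{\D^t}$, $1\le t\le 3^n$, and then invoke Theorem~\ref{thm:lsut-global}, which (because $p<q$) characterizes the two weight strong and weak bounds for each $I_\alpha^\D$ through the \emph{global} testing conditions $\I_{\D,out}$ and $\I_{\D,out}^*$. Thus the whole theorem follows once we show that, for each grid $\D=\D^t$ from \eqref{eqn:grids},
\[ \I_{\D,out}\le C(n,\alpha)\,\|M_\alpha(\cdot\sigma)\|_{L^p(\sigma)\to L^q(u)}, \qquad \I_{\D,out}^*\le C(n,\alpha)\,\|M_\alpha(\cdot u)\|_{L^{q'}(u)\to L^{p'}(\sigma)}, \]
and then sum over the finitely many grids using Proposition~\ref{prop:dyadic-frac}.

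The input that makes this work is a pointwise estimate: if $h\ge 0$ is supported on a cube $Q\in\D$, then
\[ I_{\alpha,Q}^{\D,out}(h)(x) = \sum_{\substack{P\in\D\\ Q\subsetneq P}} |P|^{\frac{\alpha}{n}}\avg{h}_P\,\chi_P(x) \le C(n,\alpha)\, M_\alpha h(x), \qquad x\in\R^n. \]
Indeed, for fixed $x$ the cubes $P\in\D$ with $Q\subsetneq P\ni x$ form an increasing chain $P^{(1)}\subsetneq P^{(2)}\subsetneq\cdots$, and on it $|P^{(j)}|^{\frac{\alpha}{n}}\avg{h}_{P^{(j)}} = |P^{(j)}|^{\frac{\alpha}{n}-1}\!\int_Q h$ decays geometrically (ratio $2^{\alpha-n}<1$), so the sum is at most a constant times its first term $|P^{(1)}|^{\frac{\alpha}{n}}\avg{h}_{P^{(1)}}\le M_\alpha h(x)$; if no such $P$ exists the left side is $0$. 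I expect this to be the crux of the argument: the full operator $I_\alpha^\D$ is \emph{not} dominated pointwise by $M_\alpha$, but its ``outer'' part---the sum over strict ancestors, which is precisely what appears in the global testing conditions---is.

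Granting this, testing \eqref{eqn:max-MW} on $f=\chi_Q$ gives
\[ \Big(\int_{\R^n} I_{\alpha,Q}^{\D,out}(\chi_Q\sigma)(x)^q u(x)\,dx\Big)^{\frac1q} \le C(n,\alpha)\Big(\int_{\R^n} M_\alpha(\chi_Q\sigma)(x)^q u(x)\,dx\Big)^{\frac1q} \le C(n,\alpha)\,\|M_\alpha(\cdot\sigma)\|_{L^p(\sigma)\to L^q(u)}\,\sigma(Q)^{\frac1p}, \]
so $\I_{\D,out}$ is bounded as claimed; exchanging the roles of $(\sigma,p)$ and $(u,q')$ and testing \eqref{eqn:max-MW-dual} on $g=\chi_Q$ bounds $\I_{\D,out}^*$ in exactly the same way. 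Since $0<\alpha<n$ and $1<p<q<\infty$, Theorem~\ref{thm:lsut-global} now yields $\|I_\alpha^{\D^t}(\cdot\sigma)\|_{L^p(\sigma)\to L^q(u)}\lesssim \I_{\D^t,out}+\I_{\D^t,out}^*$ and $\|I_\alpha^{\D^t}(\cdot\sigma)\|_{L^p(\sigma)\to L^{q,\infty}(u)}\lesssim \I_{\D^t,out}^*$ for every $t$. Finally, since $|I_\alpha(f\sigma)|\le I_\alpha(|f|\sigma)\le C(n,\alpha)\sup_{1\le t\le 3^n} I_\alpha^{\D^t}(|f|\sigma)$ pointwise by Proposition~\ref{prop:dyadic-frac}, summing over the $3^n$ grids (using the quasi-triangle inequality on $L^{q,\infty}(u)$ for the weak-type assertion) gives both conclusions; note that the weak-type bound used only \eqref{eqn:max-MW-dual}. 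The one point requiring care is to route the argument through the \emph{global} rather than the local testing conditions of Theorem~\ref{thm:lsut-global}---this is exactly where the strict inequality $p<q$ enters, and it is why this ``off-diagonal'' statement is available while the diagonal case $p=q$ is not.
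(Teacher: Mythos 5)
Your proposal is correct and takes essentially the same route as the paper: the paper's proof also reduces to the dyadic operators, bounds $I_{\alpha,Q}^{\D,out}(\chi_Q\sigma)(x)$ pointwise by $C(n,\alpha)\,M_\alpha(\chi_Q\sigma)(x)$ by summing the geometric chain of dyadic ancestors of the minimal cube containing both $x$ and $Q$, deduces the global testing conditions $\I_{\D,out}$ and $\I_{\D,out}^*$ from~\eqref{eqn:max-MW} and~\eqref{eqn:max-MW-dual}, and concludes from Theorem~\ref{thm:lsut-global}. The only difference is cosmetic: you spell out the test functions $\chi_Q$ and the final summation over the $3^n$ grids via Proposition~\ref{prop:dyadic-frac}, steps the paper leaves implicit.
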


\begin{proof}
It will suffice to prove this for the dyadic fractional integral
operator $I_\alpha^\D$, where $\D$ is any dyadic grid.  We will show
that the desired inequalities follow immediately from Theorem~\ref{thm:lsut-global}.  To see
this we will first consider the testing condition
\[ \I_{\D,out}  = \sup_Q \sigma(Q)^{-\frac{1}{p}}
\left(\int_\subRn I_{\alpha,Q}^{\D,out}(\sigma\chi_Q)(x)^qu(x)\,dx\right)^{\frac{1}{q}}
< \infty. \]
 Fix a cube $Q$ and $x\in \R^n$ such that there exists a dyadic
 cube $P\in \D$ with $x\in P$ and $Q \subsetneq P$.  (If
 no such cube exists then $I_{\alpha,Q}^{\D,out}(\sigma\chi_Q)(x)=0$.) 
Let $Q_0$ be the smallest such cube, and for $k\geq 1$ let $Q_k$ be
the unique dyadic cube
such that $Q_0\subset Q_k$ and $\ell(Q_k)=2^k\ell(Q_0)$.     Then
\begin{multline*}
I_{\alpha,Q}^{\D,out}(\sigma\chi_Q)(x) 
=\sum_{k=0}^\infty
|Q_k|^{\frac{\alpha}{n}}\avg{\sigma\chi_Q}_{Q_k} \chi_{Q_k}(x) \\
=  |Q_0|^{\frac{\alpha}{n}}\avg{\sigma\chi_Q}_{Q_0} |\sum_{k=0}^\infty
2^{k(\alpha-n)} 
\leq C(n,\alpha) |Q_0|^{\frac{\alpha}{n}}\avg{\sigma\chi_Q}_{Q_0}
\leq C(n,\alpha) M_\alpha(\sigma\chi_Q)(x).
\end{multline*}
Therefore, we can replace   $I_{\alpha,Q}^{\D,out}$ by $M_\alpha$
in the testing condition, and if~\eqref{eqn:max-MW} holds, then we immediately get that
$I_{\D,out}<\infty$.    Similarly, if we
assume~\eqref{eqn:max-MW-dual}, then we get that the dual testing
condition satisfies $I_{\D,out}^*<\infty$.   The strong and weak type
inequalities then follow from Theorem~\ref{thm:lsut-global}.
\end{proof}

We do not know whether Theorem~\ref{thm:MW-offdiag} is true when
$p=q$, though the failure of the Muckenhoupt-Wheeden conjectures for
singular integrals suggests that it is false.  However, it is not
clear where to look for a counter-example.  One possibility is to
modify the example of Reguera and Scurry~\cite{MR3020857}.
However, this example depends strongly on the cancellation in the
Hilbert transform, which is not present in the fractional integral,
and it is not certain how this would affect the example.  An
alternative would be to consider the counter-example
to~\eqref{eqn:weak-11-frac} in~\cite{MR2151228}.

\bigskip

When $p=q$ there is a weaker 
conjecture that we believe is true.  As we noted above,  by
Theorem~\ref{thm:strong-max-bump} we have that \eqref{eqn:max-MW}
holds if $[u,\sigma]_{A^\alpha_{p,p,B}}<\infty$, $\bar{B}\in B_p$, and \eqref{eqn:max-MW-dual}
holds if $[u,\sigma]_{A^\alpha_{p,p,A}}^*<\infty$, $\bar{A}\in
B_{p'}$.   We therefore conjecture that that if
$[u,\sigma]_{A^\alpha_{p,p,B}},\,[u,\sigma]_{A^\alpha_{p,p,A}}^*<\infty$,
then $I_{\alpha}(\cdot\sigma) : L^p(\sigma)\rightarrow L^p(u)$, and if
$[u,\sigma]_{A^\alpha_{p,p,A}}^*<\infty$, then $I_{\alpha}(\cdot\sigma) :
L^p(\sigma)\rightarrow L^{p,\infty}(u)$.  

This conjecture is the analog of the separated bump conjecture for
singular integrals.  
For fractional integrals, this conjecture is only known
for ``double'' log bumps: i.e., $A(t)=t^p\log(e+t)^{2p-1+\delta}$,
$B(t)=t^{p'}\log(e+t)^{2p'-1+\delta}$.
In~\cite[Theorem~9.42]{MR2797562} it was shown that for this choice of
$A$ the weak $(p,p)$
inequality is true if $[u,\sigma]_{A^\alpha_{p,p,A}}^*<\infty$.  We
therefore also have that the weak $(p',p')$ inequality is true if
$[u,\sigma]_{A^\alpha_{p,p,B}}<\infty$.   Then by
Corollary~\ref{cor:weak-weak-strong} we have that the two bump
conditions together imply the strong $(p,p)$ inequality.   

The proof that the bump condition implies the weak type inequality has
two steps.  First, using a sharp function estimate and two weight
extrapolation, we prove a weak $(1,1)$ inequality similar
to~\eqref{eqn:weak-11-frac}:
\[  \sup_{t>0} t\, u(\{x\in \R^n : |I_\alpha f(x)|>t\}) 
\leq C\int_\subRn |f(x)|M_{\Phi,\alpha} u(x)\,dx, \]
where $\Phi(t)=t\log(e+t)^{1+\epsilon}$,
$\epsilon>0$, and $M_{\Phi,\alpha}$ is the Orlicz fractional maximal
operator 
\[ M_{\Phi,\alpha} u(x) = \sup_Q
|Q|^{\frac{\alpha}{n}}\|u\|_{\Phi,Q}\, \chi_Q(x). \] 
The weak $(p,p)$ inequality then follows by again applying two weight
extrapolation and a two weight norm inequality for $M_{\Phi,\alpha}$.

We conjecture that the weak $(1,1)$ inequality is true if we replace
$\Phi$ with $\Psi(t)=t\log(e+t)^\epsilon$.  If this were the case,
then the same extrapolation argument would yield the weak $(p,p)$
inequality for log bumps, and the strong type inequality would follow
as before.  The analogous weak $(1,1)$ inequality is true for singular
integrals: this was proved by P\'erez~\cite{perez94b}.  (See
also~\cite{anderson-DCU-moen}.)  Unfortunately, every attempt to adapt
these proofs to fractional integrals has failed.

An alternate approach would be to prove the weak $(p,p)$ inequality
directly using the testing conditions in
Theorem~\ref{thm:frac-testing}.   One way to do this would be to adapt the corona
decomposition argument used in~\cite{CRV2012} to fractional
integrals. We tried to do this, but our proof in~\cite{MR3224572} 
only worked if $\frac{1}{p}-\frac{1}{q} \approx \frac{\alpha}{n}$.
More recently, we have shown~\cite{dcu-km-P} that it can be modified to work provided
$p<q$; but again the argument fails when $p=q$. 
We strongly believe that the separated bump conjecture is true for
log bumps, and suspect that it is true in general.  However, it is
clear that either new ideas or a non-trivial adaptation of existing ones will be
needed to prove it.

\subsection*{Two conjectures for commutators}

We conclude with two conjectures for commutators of
fractional integrals.  The first is a separated bump conjecture.    A
close examination of the proof of Theorem~\ref{thm:comm-joined} shows
that we actually proved something stronger:  we showed that for
$0<\alpha<n$ and $1<p\leq q<\infty$,  if a pair of weights
$(u,\sigma)$ satisfies
\begin{equation} \label{eqn:bump-comm}
\sup_Q|Q|^{\frac{\alpha}{n}+\frac{1}{q}-\frac{1}{p}}\|u^{\frac{1}{q}}\|_{A,Q} \|\sigma^{\frac{1}{p'}}\|_{B,Q} <
\infty, 
\end{equation}
with  $A(t)=t^{q}\log(e+t)^{2q-1+\delta}$, and $B(t)=
t^{p'}\log(e+t)^{p'-1+\delta}$, and
\begin{equation} \label{eqn:dual-bump-comm-dual}
 \sup_Q|Q|^{\frac{\alpha}{n}+\frac{1}{q}-\frac{1}{p}}\|u^{\frac{1}{q}}\|_{C,Q} \|\sigma^{\frac{1}{p'}}\|_{D,Q} <
\infty, 
\end{equation}
with $C(t)=t^{q}\log(e+t)^{q-1+\delta}$, and $D(t)=
t^{p'}\log(e+t)^{2p'-1+\delta}$, 
then the strong $(p,q)$ inequality $[b,I_\alpha](\cdot\sigma) :
L^p(\sigma) \rightarrow L^{q}(u)$ holds.

There is no comparable result known for the weak $(p,q)$ inequality.
However, in~\cite{MR1793688} two weight weak type inequalities were proved for
singular integral operators and  we believe that the proofs
there could be adapted to prove that $[b,I_\alpha](\cdot\sigma) :
L^p(\sigma) \rightarrow L^{q,\infty}(u)$ provided
that~\eqref{eqn:dual-bump-comm-dual} holds with
$C(t)=t^{rq}$,   $r>1$, and $D(t)= t^{p'}\log(e+t)^{p'}$.
Further, using ideas from~\cite{cruz-uribe-fiorenza02}, we could in
fact take  $C$ to be from a family of  Young functions called exponential log
bumps. 

We conjecture that the following separated bump conditions are
sufficient: the strong $(p,q)$ inequality holds if $(u,\sigma)$
satisfy~\eqref{eqn:bump-comm} and~\eqref{eqn:dual-bump-comm-dual} but
with $B(t)=t^{p'}$ and $C(t)=t^{q'}$.  Similarly, the weak $(p,q)$
inequality holds if~\eqref{eqn:bump-comm} holds with $B(t)=t^{q'}$.
To prove these conjectures, it would suffice to prove them for the dyadic
operator $C_b^\D$ in Proposition~\ref{prop:dyadic-commutator}. 
It will probably 
be easier to prove these conjectures in the off-diagonal case when
$p<q$.  One approach in this case would be to prove a ``global''
version of the testing condition conjectures for commutators given at
the end of Section~\ref{section:testing}.  This might be done by
adapting the arguments in~\cite{LacSawUT2010}.  
Further,  though
it would probably not yield the full conjecture, it would be 
interesting to see if the proof  in~\cite[Theorem~9.42]{MR2797562} for
fractional integrals could be modified to prove a non-optimal weak
type inequality for commutators.  It seems possible that this approach
would yield the weak type inequality with
$A(t)=t^{q}\log(e+t)^{3q-1+\delta}$.  

\medskip

The second conjecture concerns the necessity of $BMO$ for commutators
to be bounded.  In~\cite{MR642611} Chanillo showed that  if $[b,I_\alpha] :
L^p\rightarrow L^q$, $\frac{1}{p}-\frac{1}{q}=\frac{\alpha}{n}$, and
$n-\alpha$ is an even integer, then $b\in BMO$.  (Very recently, this
restriction was removed by Chaffee~\cite{2014arXiv1410.4587C}.) At the end of the meeting in Antequera, J.~L. Torrea asked
if anything could be said about $b$ if
there exists a pair of weights $(u,\sigma)$ (or perhaps a family of
such pairs) such that $[b,I_\alpha](\cdot\sigma) :
L^p(\sigma)\rightarrow L^q(u)$.    Nothing is known about this
question, but it merits further investigation.  

\bibliographystyle{plain}
\bibliography{antequera}

\begin{thebibliography}{100}

\bibitem{MR2424078}
R.~A. Adams and J.~J.~F. Fournier.
\newblock {\em Sobolev {S}paces}, volume 140 of {\em Pure and Applied
  Mathematics (Amsterdam)}.
\newblock Elsevier/Academic Press, Amsterdam, second edition, 2003.

\bibitem{anderson-DCU-moen}
T.~C. Anderson, D.~Cruz-Uribe, and K.~Moen.
\newblock Logarithmic bump conditions for {C}alder\'on-{Z}ygmund operators on
  spaces of homogeneous type.
\newblock {\em Publ. Mat.}, to appear.
\newblock ArXiv 1308.2026.

\bibitem{MR0442579}
A.~P. Calder{\'o}n.
\newblock Inequalities for the maximal function relative to a metric.
\newblock {\em Studia Math.}, 57(3):297--306, 1976.

\bibitem{MR0052553}
A.~P. Calder{\'o}n and A.~Zygmund.
\newblock On the existence of certain singular integrals.
\newblock {\em Acta Math.}, 88:85--139, 1952.

\bibitem{MR2151228}
M.~J. Carro, C.~P{\'e}rez, F.~Soria, and J.~Soria.
\newblock Maximal functions and the control of weighted inequalities for the
  fractional integral operator.
\newblock {\em Indiana Univ. Math. J.}, 54(3):627--644, 2005.

\bibitem{2014arXiv1410.4587C}
L.~{Chaffee}.
\newblock {Characterizations of BMO through commutators of bilinear singular
  integral operators}.
\newblock {\em preprint}, October 2014.
\newblock ArXiv 1410.4587.

\bibitem{chang-wilson-wolff85}
S.-Y.~A. Chang, J.~M. Wilson, and T.~H. Wolff.
\newblock Some weighted norm inequalities concerning the {S}chr\"odinger
  operators.
\newblock {\em Comment. Math. Helv.}, 60(2):217--246, 1985.

\bibitem{MR642611}
S.~Chanillo.
\newblock A note on commutators.
\newblock {\em Indiana Univ. Math. J.}, 31(1):7--16, 1982.

\bibitem{MR1174821}
F.~Chiarenza and M.~Franciosi.
\newblock A generalization of a theorem by {C}. {M}iranda.
\newblock {\em Ann. Mat. Pura Appl. (4)}, 161:285--297, 1992.

\bibitem{MR684636}
M.~Christ and R.~Fefferman.
\newblock A note on weighted norm inequalities for the {H}ardy-{L}ittlewood
  maximal operator.
\newblock {\em Proc. Amer. Math. Soc.}, 87(3):447--448, 1983.

\bibitem{MR2869172}
D.~Chung, C.~Pereyra, and C.~P\'erez.
\newblock Sharp bounds for general commutators on weighted {L}ebesgue spaces.
\newblock {\em Trans. Amer. Math. Soc.}, 364(3):1163--1177, 2012.

\bibitem{coifman-fefferman74}
R.~R. Coifman and C.~Fefferman.
\newblock Weighted norm inequalities for maximal functions and singular
  integrals.
\newblock {\em Studia Math.}, 51:241--250, 1974.

\bibitem{MR2979613}
J.~M. Conde.
\newblock A note on dyadic coverings and nondoubling {C}alder\'on-{Z}ygmund
  theory.
\newblock {\em J. Math. Anal. Appl.}, 397(2):785--790, 2013.

\bibitem{cruz-uribe-fiorenza02}
D.~Cruz-Uribe and A.~Fiorenza.
\newblock The {$A\sb \infty$} property for {Y}oung functions and weighted norm
  inequalities.
\newblock {\em Houston J. Math.}, 28(1):169--182, 2002.

\bibitem{cruz-uribe-fiorenza03}
D.~Cruz-Uribe and A.~Fiorenza.
\newblock Endpoint estimates and weighted norm inequalities for commutators of
  fractional integrals.
\newblock {\em Publ. Mat.}, 47(1):103--131, 2003.

\bibitem{cruz-fiorenza-book}
D.~Cruz-Uribe and A.~Fiorenza.
\newblock {\em Variable Lebesgue Spaces: Foundations and Harmonic Analysis}.
\newblock Applied and Numerical Harmonic Analysis. Birkh\"auser, Basel, 2013.

\bibitem{cruz-uribe-martell-perez04}
D.~Cruz-Uribe, J.~M. Martell, and C.~P{\'e}rez.
\newblock Extrapolation from {$A\sb \infty$} weights and applications.
\newblock {\em J. Funct. Anal.}, 213(2):412--439, 2004.

\bibitem{cruz-uribe-martell-perez05}
D.~Cruz-Uribe, J.~M. Martell, and C.~P{\'e}rez.
\newblock Weighted weak-type inequalities and a conjecture of {S}awyer.
\newblock {\em Int. Math. Res. Not.}, (30):1849--1871, 2005.

\bibitem{cruz-uribe-martell-perezP}
D.~Cruz-Uribe, J.~M. Martell, and C.~P{\'e}rez.
\newblock Sharp two-weight inequalities for singular integrals, with
  applications to the {H}ilbert transform and the {S}arason conjecture.
\newblock {\em Adv. Math.}, 216(2):647--676, 2007.

\bibitem{dcu-martell-perez}
D.~Cruz-Uribe, J.~M. Martell, and C.~P{\'e}rez.
\newblock Sharp weighted estimates for classical operators.
\newblock {\em Adv. Math.}, 229:408--441, 2011.

\bibitem{MR2797562}
D.~Cruz-Uribe, J.~M. Martell, and C.~P{\'e}rez.
\newblock {\em Weights, extrapolation and the theory of {R}ubio de {F}rancia},
  volume 215 of {\em Operator Theory: Advances and Applications}.
\newblock Birkh\"auser/Springer Basel AG, Basel, 2011.

\bibitem{cruz-martell-perezP}
D.~Cruz-Uribe, J.~M. Martell, and C.~P\'erez.
\newblock A note on the off-diagonal {M}uckenhoupt-{W}heeden conjecture.
\newblock In {\em Advanced Courses in Mathematical Analysis V}. World
  Scientific, to appear.
\newblock ArXiv 1303.3424.

\bibitem{cruz-moen2012}
D.~Cruz-Uribe and K.~Moen.
\newblock Sharp norm inequalities for commutators of classical operators.
\newblock {\em Publ. Mat.}, 56:147--190, 2012.

\bibitem{MR3065302}
D.~Cruz-Uribe and K.~Moen.
\newblock A fractional {M}uckenhoupt-{W}heeden theorem and its consequences.
\newblock {\em Integral Equations Operator Theory}, 76(3):421--446, 2013.

\bibitem{MR3224572}
D.~Cruz-Uribe and K.~Moen.
\newblock One and two weight norm inequalities for {R}iesz potentials.
\newblock {\em Illinois J. Math.}, 57(1):295--323, 2013.

\bibitem{dcu-km-P}
D.~Cruz-Uribe and K.~Moen.
\newblock Super sparse families with applications to the separated bump
  conjecture for fractional integral operators.
\newblock {\em preprint}, 2014.

\bibitem{dcu-km-sr14}
D.~Cruz-Uribe, K.~Moen, and S.~Rodney.
\newblock Regularity results for weak solutions of elliptic {PDE}s below the
  natural exponent.
\newblock {\em preprint}, 2014.
\newblock ArXiv 1408.6759.

\bibitem{MR1713140}
D.~Cruz-Uribe and C.~P{\'e}rez.
\newblock Sharp two-weight, weak-type norm inequalities for singular integral
  operators.
\newblock {\em Math. Res. Lett.}, 6(3-4):417--427, 1999.

\bibitem{MR1793688}
D.~Cruz-Uribe and C.~P{\'e}rez.
\newblock Two-weight, weak-type norm inequalities for fractional integrals,
  {C}alder\'on-{Z}ygmund operators and commutators.
\newblock {\em Indiana Univ. Math. J.}, 49(2):697--721, 2000.

\bibitem{cruz-uribe-perez02}
D.~Cruz-Uribe and C.~P{\'e}rez.
\newblock On the two-weight problem for singular integral operators.
\newblock {\em Ann. Sc. Norm. Super. Pisa Cl. Sci. (5)}, 1(4):821--849, 2002.

\bibitem{CRV2012}
D.~Cruz-Uribe, A.~Reznikov, and A.~Volberg.
\newblock Logarithmic bump conditions and the two-weight boundedness of
  {C}alder{\'o}n-{Z}ygmund operators.
\newblock {\em Adv. Math.}, 255:706--729, 2014.

\bibitem{MR1113517}
G.~David and S.~Semmes.
\newblock Singular integrals and rectifiable sets in {${\bf R}^n$}: {B}eyond
  {L}ipschitz graphs.
\newblock {\em Ast\'erisque}, (193):152, 1991.

\bibitem{MR1251061}
G.~David and S.~Semmes.
\newblock {\em Analysis of and on uniformly rectifiable sets}, volume~38 of
  {\em Mathematical Surveys and Monographs}.
\newblock American Mathematical Society, Providence, RI, 1993.

\bibitem{MR1480938}
A.~de~la Torre.
\newblock On the adjoint of the maximal function.
\newblock In {\em Function spaces, differential operators and nonlinear
  analysis ({P}aseky nad {J}izerou, 1995)}, pages 189--194. Prometheus, Prague,
  1996.

\bibitem{duoandikoetxea01}
J.~Duoandikoetxea.
\newblock {\em Fourier analysis}, volume~29 of {\em Graduate Studies in
  Mathematics}.
\newblock American Mathematical Society, Providence, RI, 2001.

\bibitem{fefferman83}
C.~Fefferman.
\newblock The uncertainty principle.
\newblock {\em Bull. Amer. Math. Soc. (N.S.)}, 9(2):129--206, 1983.

\bibitem{garcia-cuerva-martell01}
J.~Garc{\'{\i}}a-Cuerva and J.~M Martell.
\newblock Two-weight norm inequalities for maximal operators and fractional
  integrals on non-homogeneous spaces.
\newblock {\em Indiana Univ. Math. J.}, 50(3):1241--1280, 2001.

\bibitem{garcia-cuerva-rubiodefrancia85}
J.~Garc{\'{\i}}a-Cuerva and J.~L. Rubio~de Francia.
\newblock {\em Weighted norm inequalities and related topics}, volume 116 of
  {\em North-Holland Mathematics Studies}.
\newblock North-Holland Publishing Co., Amsterdam, 1985.

\bibitem{MR658065}
J.~B. Garnett and P.~W. Jones.
\newblock B{MO} from dyadic {BMO}.
\newblock {\em Pacific J. Math.}, 99(2):351--371, 1982.

\bibitem{grafakos08a}
L.~Grafakos.
\newblock {\em Classical Fourier Analysis}, volume 249 of {\em Graduate Texts
  in Mathematics}.
\newblock Springer, New York, 2nd edition, 2008.

\bibitem{grafakos08b}
L.~Grafakos.
\newblock {\em Modern Fourier Analysis}, volume 250 of {\em Graduate Texts in
  Mathematics}.
\newblock Springer, New York, 2nd edition, 2008.

\bibitem{hytonenP}
T.~Hyt\"onen.
\newblock The {$A_2$} theorem: Remarks and complements.
\newblock {\em preprint}, 2012.
\newblock ArXiv 1212.3840.

\bibitem{hytonenP2010}
T.~Hyt\"{o}nen.
\newblock The sharp weighted bound for general {C}alder\'{o}n-{Z}ygmund
  operators.
\newblock {\em Ann. of Math. (2)}, 175(3):1473--1506, 2012.

\bibitem{MR3092729}
T.~Hyt\"onen and C.~P\'erez.
\newblock Sharp weighted bounds involving {$A_\infty$}.
\newblock {\em Anal. PDE}, 6(4):777--818, 2013.

\bibitem{MR3058926}
A.~Kairema.
\newblock Two-weight norm inequalities for potential type and maximal operators
  in a metric space.
\newblock {\em Publ. Mat.}, 57(1):3--56, 2013.

\bibitem{MR867921}
R.~Kerman and E.~Sawyer.
\newblock The trace inequality and eigenvalue estimates for {S}chr\"odinger
  operators.
\newblock {\em Ann. Inst. Fourier (Grenoble)}, 36(4):207--228, 1986.

\bibitem{krasnoselskii-rutickii61}
M.~A. Krasnosel{\cprime}ski{\u\i} and Ja.~B. Ruticki{\u\i}.
\newblock {\em Convex functions and {O}rlicz spaces}.
\newblock Translated from the first Russian edition by Leo F. Boron. P.
  Noordhoff Ltd., Groningen, 1961.

\bibitem{2013arXiv1310.3507L}
M.~T. {Lacey}.
\newblock On the separated bumps conjecture for {C}alder\'on-{Z}ygmund
  operators.
\newblock {\em preprint}, October 2013.
\newblock ArXiv 1310.3507.

\bibitem{2013arXiv1304.5004L}
M.~T. {Lacey}.
\newblock {The two weight inequality for the Hilbert transform: a primer}.
\newblock {\em preprint}, 2013.
\newblock ArXiv 1304.5004.

\bibitem{2013arXiv1301.4663L}
M.~T. {Lacey}.
\newblock {Two weight inequality for the Hilbert transform: a real variable
  characterization, II}.
\newblock {\em Duke Math. J.}, To appear.
\newblock ArXiv 1301.4663.

\bibitem{MR2652182}
M.~T. Lacey, K.~Moen, C.~P{\'e}rez, and R.~H. Torres.
\newblock Sharp weighted bounds for fractional integral operators.
\newblock {\em J. Funct. Anal.}, 259(5):1073--1097, 2010.

\bibitem{MR2657437}
M.~T. Lacey, S.~Petermichl, and M.~C. Reguera.
\newblock Sharp {$A_2$} inequality for {H}aar shift operators.
\newblock {\em Math. Ann.}, 348(1):127--141, 2010.

\bibitem{2012arXiv1201.4319L}
M.~T. {Lacey}, E.~T. {Sawyer}, C.-Y. {Shen}, and I.~{Uriarte-Tuero}.
\newblock {Two weight inequality for the Hilbert transform: a real variable
  characterization, I}.
\newblock {\em Duke Math. J.}, To appear.
\newblock ArXiv 1201.4319.

\bibitem{MR2957550}
M.~T. Lacey, E.~T. Sawyer, and I.~Uriarte-Tuero.
\newblock A characterization of two weight norm inequalities for maximal
  singular integrals with one doubling measure.
\newblock {\em Anal. PDE}, 5(1):1--60, 2012.

\bibitem{LacSawUT2010}
M.~T. Lacey, E.~T. Sawyer, and I.~Uriarte-Tuero.
\newblock Two weight inequalities for discrete positive operators.
\newblock {\em preprint}, 2012.
\newblock ArXiv 0911.3437.

\bibitem{MR3187852}
R.~Lechner.
\newblock The one-third-trick and shift operators.
\newblock {\em Bull. Pol. Acad. Sci. Math.}, 61(3-4):219--238, 2013.

\bibitem{lernerP}
A.~K. Lerner.
\newblock An elementary approach to several results on the {H}ardy-{L}ittlewood
  maximal operator.
\newblock {\em Proc. Amer. Math. Soc.}, 136(8):2829--2833, 2008.

\bibitem{Lern2012}
A.~K. Lerner.
\newblock On an estimate of {C}alder{\'o}n-{Z}ygmund operators by dyadic
  positive operators.
\newblock {\em J. Anal. Math.}, 2012.

\bibitem{MR3085756}
A.~K. Lerner.
\newblock A simple proof of the {$A_2$} conjecture.
\newblock {\em Int. Math. Res. Not. IMRN}, (14):3159--3170, 2013.

\bibitem{lerner-nazarov14}
A.~K. Lerner and F.~Nazarov.
\newblock Intuitive dyadic calculus: the basics.
\newblock {\em preprint}, 2014.
\newblock
  www.math.kent.edu/{\raise.17ex\hbox{$\scriptstyle\sim$}}zvavitch/Lerner\_Nazarov\_Book.pdf.

\bibitem{MR2427454}
A.~K. Lerner, S.~Ombrosi, and C.~P\'erez.
\newblock Sharp {$A_1$} bounds for {C}alder\'on-{Z}ygmund operators and the
  relationship with a problem of {M}uckenhoupt and {W}heeden.
\newblock {\em Int. Math. Res. Not. IMRN}, (6):Art. ID rnm161, 11, 2008.

\bibitem{li-pipher-ward}
J.~Li, J.~Pipher, and L.~Ward.
\newblock Dyadic structure theorems for multiparameter function spaces.
\newblock {\em Rev. Mat. Iberoam.}, to appear.
\newblock
  \\http://www.math.brown.edu/{\raise.17ex\hbox{$\scriptstyle\sim$}}jpipher/LiPipherWard\_RMI\_2013.pdf.

\bibitem{MR3256181}
L.~Liu and T.~Luque.
\newblock A {$B_p$} condition for the strong maximal function.
\newblock {\em Trans. Amer. Math. Soc.}, 366(11):5707--5726, 2014.

\bibitem{MR1187073}
R.~L. Long and F.~S. Nie.
\newblock Weighted {S}obolev inequality and eigenvalue estimates of
  {S}chr\"odinger operators.
\newblock In {\em Harmonic analysis ({T}ianjin, 1988)}, volume 1494 of {\em
  Lecture Notes in Math.}, pages 131--141. Springer, Berlin, 1991.

\bibitem{mazya85}
V.~G. Maz{\cprime}ya.
\newblock {\em Sobolev spaces}.
\newblock Springer Series in Soviet Mathematics. Springer-Verlag, Berlin, 1985.
\newblock Translated from the Russian by T. O. Shaposhnikova.

\bibitem{MR1993970}
T.~Mei.
\newblock B{MO} is the intersection of two translates of dyadic {BMO}.
\newblock {\em C. R. Math. Acad. Sci. Paris}, 336(12):1003--1006, 2003.

\bibitem{muckenhoupt-wheeden74}
B.~Muckenhoupt and R.~L. Wheeden.
\newblock Weighted norm inequalities for fractional integrals.
\newblock {\em Trans. Amer. Math. Soc.}, 192:261--274, 1974.

\bibitem{muckenhoupt-wheeden76}
B.~Muckenhoupt and R.~L. Wheeden.
\newblock Two weight function norm inequalities for the {H}ardy-{L}ittlewood
  maximal function and the {H}ilbert transform.
\newblock {\em Studia Math.}, 55(3):279--294, 1976.

\bibitem{MR0447956}
B.~Muckenhoupt and R.~L. Wheeden.
\newblock Some weighted weak-type inequalities for the {H}ardy-{L}ittlewood
  maximal function and the {H}ilbert transform.
\newblock {\em Indiana Univ. Math. J.}, 26(5):801--816, 1977.

\bibitem{MR1887641}
C.~Muscalu, T.~Tao, and C.~Thiele.
\newblock Multi-linear operators given by singular multipliers.
\newblock {\em J. Amer. Math. Soc.}, 15(2):469--496, 2002.

\bibitem{MR1428988}
F.~Nazarov and S.~Treil.
\newblock The hunt for a {B}ellman function: applications to estimates for
  singular integral operators and to other classical problems of harmonic
  analysis.
\newblock {\em Algebra i Analiz}, 8(5):32--162, 1996.

\bibitem{MR1470373}
F.~Nazarov, S.~Treil, and A.~Volberg.
\newblock Cauchy integral and {C}alder\'on-{Z}ygmund operators on
  nonhomogeneous spaces.
\newblock {\em Internat. Math. Res. Notices}, (15):703--726, 1997.

\bibitem{nazarov-treil-volberg99}
F.~Nazarov, S.~Treil, and A.~Volberg.
\newblock The {B}ellman functions and two-weight inequalities for {H}aar
  multipliers.
\newblock {\em J. Amer. Math. Soc.}, 12(4):909--928, 1999.

\bibitem{nazarov-treil-volberg03}
F.~Nazarov, S.~Treil, and A.~Volberg.
\newblock The {$Tb$}-theorem on non-homogeneous spaces.
\newblock {\em Acta Math.}, 190(2):151--239, 2003.

\bibitem{nazarov-treil-volberg08}
F.~Nazarov, S.~Treil, and A.~Volberg.
\newblock Two weight inequalities for individual {H}aar multipliers and other
  well localized operators.
\newblock {\em Math. Res. Lett.}, 15(3):583--597, 2008.

\bibitem{MR687633}
C.~J. Neugebauer.
\newblock Inserting {$A_{p}$}-weights.
\newblock {\em Proc. Amer. Math. Soc.}, 87(4):644--648, 1983.

\bibitem{MR1182488}
K.~Okikiolu.
\newblock Characterization of subsets of rectifiable curves in {${\bf R}^n$}.
\newblock {\em J. London Math. Soc. (2)}, 46(2):336--348, 1992.

\bibitem{pereyra01}
M.~C. Pereyra.
\newblock Lecture notes on dyadic harmonic analysis.
\newblock In {\em Second {S}ummer {S}chool in {A}nalysis and {M}athematical
  {P}hysics ({C}uernavaca, 2000)}, volume 289 of {\em Contemp. Math.}, pages
  1--60. Amer. Math. Soc., Providence, RI, 2001.

\bibitem{MR1291534}
C.~P{\'e}rez.
\newblock Two weighted inequalities for potential and fractional type maximal
  operators.
\newblock {\em Indiana Univ. Math. J.}, 43(2):663--683, 1994.

\bibitem{perez94b}
C.~P{\'e}rez.
\newblock Weighted norm inequalities for singular integral operators.
\newblock {\em J. London Math. Soc. (2)}, 49(2):296--308, 1994.

\bibitem{MR1327936}
C.~P{\'e}rez.
\newblock On sufficient conditions for the boundedness of the
  {H}ardy-{L}ittlewood maximal operator between weighted {$L^p$}-spaces with
  different weights.
\newblock {\em Proc. London Math. Soc. (3)}, 71(1):135--157, 1995.

\bibitem{rao-ren91}
M.~M. Rao and Z.~D. Ren.
\newblock {\em Theory of {O}rlicz spaces}, volume 146 of {\em Monographs and
  Textbooks in Pure and Applied Mathematics}.
\newblock Marcel Dekker Inc., New York, 1991.

\bibitem{MR3020857}
M.~Reguera and J.~Scurry.
\newblock On joint estimates for maximal functions and singular integrals on
  weighted spaces.
\newblock {\em Proc. Amer. Math. Soc.}, 141(5):1705--1717, 2013.

\bibitem{reguera-thieleP}
M.~Reguera and C.~Thiele.
\newblock The {H}ilbert transform does not map {$L^1(Mw)$} to
  {$L^{1,\infty}(w)$}.
\newblock {\em Math. Res. Let.}, 19(1):1--7, 2012.

\bibitem{MR0030102}
M.~Riesz.
\newblock L'int\'egrale de {R}iemann-{L}iouville et le probl\`eme de {C}auchy.
\newblock {\em Acta Math.}, 81:1--223, 1949.

\bibitem{MR1013117}
H.~L. Royden.
\newblock {\em Real analysis}.
\newblock Macmillan Publishing Company, New York, third edition, 1988.

\bibitem{sawyer82b}
E.~T. Sawyer.
\newblock A characterization of a two-weight norm inequality for maximal
  operators.
\newblock {\em Studia Math.}, 75(1):1--11, 1982.

\bibitem{MR719674}
E.~T. Sawyer.
\newblock A two weight weak type inequality for fractional integrals.
\newblock {\em Trans. Amer. Math. Soc.}, 281(1):339--345, 1984.

\bibitem{MR809769}
E.~T. Sawyer.
\newblock Weighted inequalities for the two-dimensional {H}ardy operator.
\newblock {\em Studia Math.}, 82(1):1--16, 1985.

\bibitem{sawyer85}
E.~T. Sawyer.
\newblock A weighted weak type inequality for the maximal function.
\newblock {\em Proc. Amer. Math. Soc.}, 93(4):610--614, 1985.

\bibitem{MR930072}
E.~T. Sawyer.
\newblock A characterization of two weight norm inequalities for fractional and
  {P}oisson integrals.
\newblock {\em Trans. Amer. Math. Soc.}, 308(2):533--545, 1988.

\bibitem{2013arXiv1302.5093S}
E.~T. {Sawyer}, C.-Y. {Shen}, and I.~{Uriarte-Tuero}.
\newblock {A two weight theorem for fractional singular integrals with an
  energy side condition}.
\newblock {\em preprint}, February 2013.
\newblock ArXiv 1302.5093.

\bibitem{MR1175693}
E.~T. Sawyer and R.~L. Wheeden.
\newblock Weighted inequalities for fractional integrals on {E}uclidean and
  homogeneous spaces.
\newblock {\em Amer. J. Math.}, 114(4):813--874, 1992.

\bibitem{MR0290095}
E.~M. Stein.
\newblock {\em Singular integrals and differentiability properties of
  functions}.
\newblock Princeton Mathematical Series, No. 30. Princeton University Press,
  Princeton, N.J., 1970.

\bibitem{stein93}
E.~M. Stein.
\newblock {\em Harmonic analysis: real-variable methods, orthogonality, and
  oscillatory integrals}, volume~43 of {\em Princeton Mathematical Series}.
\newblock Princeton University Press, Princeton, NJ, 1993.
\newblock With the assistance of Timothy S. Murphy, Monographs in Harmonic
  Analysis, III.

\bibitem{stein-weiss71}
E.~M. Stein and G.~Weiss.
\newblock {\em Introduction to {F}ourier analysis on {E}uclidean spaces}.
\newblock Princeton University Press, Princeton, N.J., 1971.
\newblock Princeton Mathematical Series, No. 32.

\bibitem{2013arXiv1302.4164T}
H.~{Tanaka}.
\newblock {A characterization of two weight trace inequalities for positive
  dyadic operators in the upper triangle case}.
\newblock {\em preprint}, February 2013.
\newblock ArXiv 1302.4164.

\bibitem{treilP}
S.~Treil.
\newblock A remark on two weight estimates for positive dyadic operators.
\newblock {\em preprint}, 2012.
\newblock ArXiv 1201.1455.

\bibitem{2014arXiv1408.0385T}
S.~{Treil} and A.~{Volberg}.
\newblock {Entropy conditions in two weight inequalities for singular integral
  operators}.
\newblock {\em preprint}, August 2014.
\newblock ArXiv 1408.0385.

\bibitem{MR1134691}
I.~E. Verbitsky.
\newblock Weighted norm inequalities for maximal operators and {P}isier's
  theorem on factorization through {$L^{p\infty}$}.
\newblock {\em Integral Equations Operator Theory}, 15(1):124--153, 1992.

\bibitem{MR2019058}
A.~Volberg.
\newblock {\em Calder\'on-{Z}ygmund capacities and operators on nonhomogeneous
  spaces}, volume 100 of {\em CBMS Regional Conference Series in Mathematics}.
\newblock Published for the Conference Board of the Mathematical Sciences,
  Washington, DC; by the American Mathematical Society, Providence, RI, 2003.

\bibitem{wilson89}
J.~M. Wilson.
\newblock Weighted norm inequalities for the continuous square function.
\newblock {\em Trans. Amer. Math. Soc.}, 314(2):661--692, 1989.

\bibitem{MR659943}
T.~H. Wolff.
\newblock Two algebras of bounded functions.
\newblock {\em Duke Math. J.}, 49(2):321--328, 1982.

\bibitem{MR1014685}
W.~P. Ziemer.
\newblock {\em Weakly Differentiable Functions}, volume 120 of {\em Graduate
  Texts in Mathematics}.
\newblock Springer-Verlag, New York, 1989.

\end{thebibliography}

\end{document}